\Crefname{appsec}{appendix}{appendices}
\numberwithin{equation}{section}
\newtheorem{theorem}{Theorem}[section]
\newtheorem{lemma}{Lemma}[section]
\newtheorem{corollary}{Corollary}[section]
\newtheorem{remark}{Remark}[section]
\newtheorem{proposition}{Proposition}[section]
\providecommand{\keywords}[1]
{
  \textbf{Key Words and Phrases.} #1
}
\providecommand{\classification}[1]
{
  \textbf{Mathematics Subject Classification.} #1
}
\title{Continuum Limit of 2D Fractional Nonlinear Schr\"odinger Equation}
\author{Brian Choi\thanks{Corresponding author. Contact: \texttt{choigh@smu.edu}}, Alejandro Aceves\thanks{Contact: \texttt{aaceves@smu.edu}}%
  }
\affil{Department of Mathematics, Southern Methodist University, Dallas, TX 75275, USA}
\date{}
\begin{document}
\maketitle\vspace{-8ex}
\maketitle
\begin{abstract}
We prove that the solutions to the discrete Nonlinear Schr\"odinger Equation (DNLSE) with non-local algebraically-decaying coupling converge strongly in $L^2(\mathbb{R}^2)$ to those of the continuum fractional Nonlinear Schr\"odinger Equation (FNLSE), as the discretization parameter tends to zero. The proof relies on sharp dispersive estimates that yield the Strichartz estimates that are uniform in the discretization parameter. An explicit computation of the leading term of the oscillatory integral asymptotics is used to show that the best constants of a family of dispersive estimates blow up as the non-locality parameter $\alpha \in (1,2)$ approaches the boundaries.
\end{abstract}
\classification{35B30, 35Q40, 35Q55, 35Q60, 35R11, 37K60}\\
\keywords{Continuum Limit, Fractional Equation, Lattice system, NLS}
\section{Introduction.}

The mathematical description of physical phenomena, in many instances, results in the formulation of partial differential equations (PDEs) describing  state variables in continuum media. Despite the fact that  it is highly unlikely to find exact solutions of many linear or nonlinear PDEs, advances in numerical analysis and scientific computing open the door to find approximate solutions to complex problems. In particular, numerical approximations based on finite difference schemes are constructed by discretizing spatial variables, leading to a system of coupled ordinary differential equations. In this line of research, the objective is then to determine how well the approximate solution evaluated in the grid approximates the solutions of the corresponding PDE. 

On the other hand, there are well known universal models that are {\it inherently discrete}. Generically referred to as coupled oscillator systems, they describe phenomena such as localization or synchronization, characteristic of its discrete nature. Best known examples are the Fermi–Pasta–Ulam–Tsingou model, the discrete nonlinear Schr\"odinger Equation and the Kuramoto model. The first two describe dynamics in a lattice with nearest neighbor interactions, whereas the Kuramoto model addresses synchronization for globally coupled oscillators. These and similar models continue to be studied given their applicability in photonics, lasers, and networks such as the power grid to name some. For such models a suitable approximation named the long-wave approximation assumes a ``smooth" variation of the state variable amongst neighbor lattices. Specifically in a 1-dimensional lattice, this means $u_{n\pm 1} \approx u_n$. In this regime, it is reasonable to consider continuum approximation. For a 1-d lattice model, the continuum approximation $u_{n \pm 1} \rightarrow U(x \pm h)$, where $h>0$ is small, with nearest neighbor coupling $C(u_{n+1} + u_{n-1})$ leads to a term proportional to $\frac{\partial^2U}{\partial x^2}$ and in return, the system of ODEs is then approximated by a PDE.

Recently, there has been an increased interest in the models based on FNLSE. While most of the research deals with continuum models, including numerical computations of solutions in the nonlinear regime, less is known about discrete systems showing global coupling with algebraic decay on the coupling strength with respect to the distance between nodes in the lattice. This work considers such a case in a two-dimensional lattice and centers on the question of the validity of a suitable continuum approximation. This is not always a trivial task as, for instance, invariances and symmetries may arise or be lost. In contrast to the (continuum) nonlinear Schr\"odinger equation that admits the Galilean boost from which traveling wave solutions emerge, many lattice systems lack translational invariance. It is known that highly localized solutions in a lattice system do not propagate due to the presence of the Peirels-Nabarro potential
\cite{Kivshar,hocking2022topological}; for a recent work on FNLSE in this context, see \cite{jenkinson2017}. All this is to point out the challenges and open problems that need to be studied by a combination of analytical and numerical tools.  In this contribution, we report what we think are first analytic results on the underlying fundamental question of determining the continuum approximation on the FNLSE in more than one dimension.

\section{Statement of the problem.}

This work concerns the continuum limit of the discrete fractional nonlinear Schr\"odinger equation (FNLSE)
\begin{equation}\label{maineq1}
i \dot{u}_h = (-\Delta_h)^{\frac{\alpha}{2}}u_h + \mu |u_h|^{p-1}u_h,\:u_h(x,0)=u_{0,h}(x),
\end{equation}
to the continuum FNLS
\begin{equation}\label{maineq2}
i\partial_t u = (-\Delta)^{\frac{\alpha}{2}}u + \mu |u|^{p-1} u,\: u(x,0) = u_0(x),
\end{equation}
as $h\rightarrow 0+$ where $\alpha \in (0,2]\setminus \{1\},\: p>1,\: \mu = \pm 1,$ and $u: \mathbb{R}^{2+1}\rightarrow \mathbb{C},\:u_h:h\mathbb{Z}^2\times\mathbb{R}\rightarrow\mathbb{C}$. Let \eqref{maineq1}, \eqref{maineq2} be well-posed in some Banach spaces $X,X_h$, respectively, where $0<h\leq 1$ denotes a discretization parameter. Suppose $u_{0,h}\in X_h$ is the discretized $u_0\in X$. Given an interpolation operator $p_h:X_h \rightarrow X$ and $T>0$ such that $u(t),u_h(t)$ denote the well-posed solutions on $[0,T]$, the main problem then reduces to identifying values of $\alpha,p$ that allows
\begin{equation*}
\lim_{h\rightarrow 0}\sup_{t\in [0,T]}\| p_h u_h(t) - u(t) \|_{X}=0.
\end{equation*}

The study of evolution equations on $\mathbb{R}$ with a general class of interaction kernel was done in \cite{kirpatrick} where the continuum limit was proved in the weak sense. By applying the analytic tools in \cite{hong2018uniform} that yield dispersive estimates for the discrete Schr\"odinger evolution that are uniform in $h$, \cite{hong2019strong} extended the aforementioned weak convergence to strong convergence in the $L^2$-setting (with convergence rates) for $\alpha=2$ in $\mathbb{R}^d,\:d=1,2,3$ and $\alpha \in (0,2)\setminus\{1\}$ on $\mathbb{R}$. The central perspective in \cite{hong2019strong}, upon which we develop, that sharp dispersive estimates that are uniform in $h$ control the difference $p_h u_h - u$, at least in the scaling-subcritical regime, proved to be fruitful as can be illustrated in various works such as \cite{hong2021finite} that studied the case $\alpha=2$ on $\mathbb{T}^2$ as the spatial domain, \cite{hong2021continuum} that studied the large box limit for $\alpha=2$ in $\mathbb{R}^d,\: d=2,3$, and \cite{hong2021korteweg} that showed the rigorous derivation of the KdV equation from the FPU system. Using a similar idea, the continuum limit of the space-time FNLS was investigated in \cite{grande2019continuum}. Furthermore see the works of Ignat and Zuazua \cite{ignat2005dispersive,ignat2005two,ignat2009numerical,zuazua2009convergence,ignat2012convergence} where novel approaches such as the Fourier filtering and the two-grid algorithm were used.

In practice, obtaining appropriate dispersive estimates reduces to oscillatory integral estimations, which is of central concern in our approach. Unlike the continuum case, the dispersion relation for the discrete evolution has degenerate critical points, which results in weaker dispersion than the continuum Schr\"odinger evolution. This in return admits weaker Strichartz estimates, which limits the class of nonlinearities that leads to the well-posedness of the corresponding nonlinear equation via the contraction mapping argument. To be more quantitative, let $U(t) = e^{-it(-\Delta)^{\frac{\alpha}{2}}},\: U_h(t) = e^{-it(-\Delta_h)^{\frac{\alpha}{2}}}$ and $\| f \|_{L^p_h} := h^{\frac{d}{p}}(\sum\limits_{x\in h\mathbb{Z}^2}|f(x)|^p)^{\frac{1}{p}}$ for $p<\infty$ with $\| f \|_{L^\infty_h} = \| f \|_{L^\infty(h\mathbb{Z}^2)}$; see \Cref{notation} for notations. For $\alpha=2$, \cite[Theorem 1]{stefanov2005asymptotic} established\footnote{Denote $A \lesssim B$ when there exists a constant of non-interest $C>0$ such that $A \leq CB$ and define $A \simeq B$ if $A \lesssim B$ and $B \lesssim A$.} 
\begin{equation*}
\| U_h(t) \|_{L^1_h\rightarrow L^\infty_h} \simeq_h |t|^{-\frac{d}{3}},    
\end{equation*}
where the implicit constant blows up as $h\rightarrow 0+$, which contrasts with $\| U(t) \|_{L^1(\mathbb{R}^d)\rightarrow L^\infty(\mathbb{R}^d)} \simeq |t|^{-\frac{d}{2}}$. Our objective is to obtain Strichartz estimates for the discrete evolution that are uniform in $h$.

For $\alpha<2$, \cite[Proposition 3.1]{hong2019strong} obtained 
\begin{equation}\label{dispersive est2}
\|U_h(t)P_N f \|_{L^\infty_h} \lesssim_\alpha (\frac{N}{h})^{1-\frac{\alpha}{3}}|t|^{-\frac{1}{3}}\| f \|_{L^1_h},\:\alpha \in (1,2)
\end{equation}
for all $N\in 2^\mathbb{Z}$ with $N\leq 1$ on $h\mathbb{Z}$ where the $P_N$ denotes the Littlewood-Paley operator. Our goal is to obtain a two-dimensional analog of \eqref{dispersive est2}. The proof in \cite{hong2019strong} cannot be directly generalized, however, since the set of degenerate critical points on $h\mathbb{Z}$ consists of isolated points whose corresponding oscillatory integrals cannot be estimated directly by the Van der Corput Lemma. In higher dimensions, the set of degeneracy is geometrically more complicated. In fact, our analysis shows that the degenerate critical points define a one-dimensional embedded smooth submanifold in the torus $[-\pi,\pi]^2$ where each singular point admits a unique direction along which the third derivative does not vanish (fold) except at four points (cusp) at which the fourth derivative does not vanish. This observation that a singular point is at worst a cusp is consistent with \cite{borovyk2017klein}. It is expected that more severe singularities exist in higher dimensions as the structure of the Hessian of the dispersion relation becomes more complicated. This dimension-dependent geometric complication is purely a remnant of non-locality since the linear evolution of classical Schr\"odinger operator on $h\mathbb{Z}^d$ splits as the $d$-fold tensor product on each dimension.

Consider the dispersion relation
\begin{equation*}
w_{h,m}(\xi) = \Big(m^2+\frac{4}{h^2}\sum_{i=1}^2 \sin^2(\frac{h\xi_i}{2})\Big)^{\frac{\alpha}{2}},
\end{equation*}
and the quantity of interest
\begin{equation*}
    \int_{\frac{\mathbb{T}^2}{h}} e^{i(x \cdot \xi - t w_{h,m}(\xi))}\eta(\xi)d\xi,
\end{equation*}
where $\mathbb{T} =\frac{\mathbb{R}}{2\pi\mathbb{Z}}= [-\pi,\pi]$ and $\eta \in C^\infty_c(\frac{\mathbb{T}^2}{h})$; the dispersion relation of \eqref{maineq2} is $w_{h,0}$. \cite{schultz1998wave} showed that when $m=0,\:\alpha=1$, which corresponds to the dispersion relation of the discrete wave equation, then the quantity of interest decays as $O(t^{-\frac{2}{3}})$ in $d=2$ and $O(t^{-\frac{7}{6}})$ in $d=3$. When $m>0,\:\alpha=1$, which corresponds to the discrete Klein-Gordon equation, \cite{borovyk2017klein} showed that the quantity of interest decays as $O(t^{-\frac{3}{4}})$ in $d=2$, and the result was extended to higher dimensions ($d=3,4$) in \cite{cuenin2021sharp}. When $m=0,\:\alpha=2$, the time decay of the fundamental solution of the classical discrete Schr\"odinger equation was shown to be $O(t^{-\frac{d}{3}})$ in \cite{stefanov2005asymptotic}.

Our objective is to obtain the sharp time decay of the quantity of interest for $m=0,\:\alpha \in (1,2)$ in $d=2$. In particular, it is shown that the oscillatory integral decays as $O(t^{-\frac{3}{4}})$. The main tool that we adopt is the analysis of Newton polyhedron generated by the Taylor expansion of the phase function $x\cdot \xi - t w_{h,0}(\xi)$ in an adapted coordinate system, a method pioneered in \cite{varchenko1976newton}. Furthermore the asymptotics in both regimes $\alpha\rightarrow 1+$ (wave limit) and $\alpha\rightarrow 2-$ (Schr\"odinger limit) are studied. To our knowledge, the dependence on the non-local parameter has not been clearly investigated in previous works. To obtain the asymptotics of the leading term of $O(t^{-\frac{3}{4}})$ as a function of $\alpha$, we represent the phase function in a superadapted coordinate system to apply results of \cite{greenblatt2009asymptotic}.

The relation of our work to the theory of stability of degenerate oscillatory integrals is subtle. A cursory observation might suggest that a degenerate integral (our quantity of interest) would be stable under a small perturbation in the non-local parameter. However the phase fails to be smooth for $\alpha<2$, and therefore becomes large in appropriate norm(s) as the support of $\eta$ becomes arbitrarily close to the origin. In our approach, it suffices to invoke the stability result \cite{ikromov2011uniform} under linear perturbations in phase. For more general stability results under analytic or smooth perturbations, see \cite{karpushkin1986theorem,greenblatt2014stability}. For the support of $\eta$ close to the origin, $\sin z \sim z$ by the small angle approximation, after which one might wish to invoke \cite{cho2011remarks} that obtained sharp dispersive estimates for radial dispersion relations. However such approximation is not a linear perturbation and hence we handle that case by direct computation.

The paper is organized as follows. Notations and main results are presented in \Cref{notation}. Assuming the results hold, the desired continuum limit is shown in \Cref{continuum limit}. The proof of our main proposition is in \Cref{main proof}, followed by a concluding remark in \Cref{conclusion}.

\section{Main results.}\label{notation}
To discuss continuum limit, the parameters that yield the well-posedness of \eqref{maineq1}, \eqref{maineq2} must be identified. For the discrete equation, the linear operator
\begin{equation}\label{fractional}
\Delta_h f (x) = \sum_{i=1}^d \frac{f(x+he_i)+f(x-he_i)-2f(x)}{h^2},\: x \in h\mathbb{Z}^d,
\end{equation}
defines a bounded, non-negative, self-adjoint operator on $L^2_h$, and so are its fractional powers given by functional calculus. Equivalently $(-\Delta_h)^{\frac{\alpha}{2}}$ is given by the Fourier multiplier
\begin{equation}\label{fractional2}
    (-\Delta_h)^{\frac{\alpha}{2}} = \mathcal{F}_h^{-1} \Big\{\sum_{i=1}^d \frac{4}{h^2} \sin^2\Big(\frac{h\xi_i}{2}\Big)\Big\}^{\frac{\alpha}{2}}\mathcal{F}_h, 
\end{equation}
where the discrete Fourier transform is defined as
\begin{equation*}
\begin{split}
\widehat{f}(\xi) = \mathcal{F}_h f (\xi)=h^d \sum_{x \in h\mathbb{Z}^d} f(x)e^{-i x \cdot \xi},\: f(x) = (2\pi)^{-d} \int_{\frac{\mathbb{T}^d}{h}} \widehat{f}(\xi)e^{ix\cdot \xi}d\xi,
\end{split}
\end{equation*}
for $\xi \in \frac{\mathbb{T}^d}{h}$. Recall the Sobolev space on $h\mathbb{Z}^d$ for $s \in \mathbb{R},\: p \in (1,\infty)$ given by
\begin{equation*}
\begin{split}
\| f \|_{W^{s,p}_h} = \| \langle \nabla_h \rangle^s f \|_{L^p_h},\: \| f \|_{\dot{W}^{s,p}_h} = \| | \nabla_h |^s f \|_{L^p_h},
\end{split}
\end{equation*}
where
\begin{equation*}
    \langle \nabla_h \rangle^s = \mathcal{F}_h^{-1} \langle \xi \rangle^s \mathcal{F}_h,\: | \nabla_h |^s = \mathcal{F}_h^{-1} | \xi |^s \mathcal{F}_h,
\end{equation*}
and $\langle \xi \rangle = (1+|\xi|^2)^{\frac{1}{2}}$ for $\xi \in \frac{\mathbb{T}^d}{h}$. The nonlinearity $u_h\mapsto |u_h|^{p-1}u_h$ is locally Lipschitz continuous due to $L^2_h \hookrightarrow L^\infty_h$, which yields an immediate well-posedness of \eqref{maineq1} in $L^2_h$ via the contraction mapping argument. For the continuum case, consider the family of self-similar solutions
\begin{equation*}
u(x,t) \rightarrow u_\lambda(x,t) := \lambda^{-\frac{\alpha}{p-1}}u\Big(\frac{x}{\lambda},\frac{t}{\lambda^\alpha}\Big),
\end{equation*}
and observing that $\{u_\lambda(\cdot,t)\}_{\lambda>0}$ leaves $\dot{H}^{s_c}(\mathbb{R}^d)$ invariant for all $t$, one obtains the Sobolev-critical regularity
\begin{equation*}
s_c = \frac{d}{2} - \frac{\alpha}{p-1}.   
\end{equation*} 
Our analysis is in the scaling-subcritical regime where the time of existence depends on the Sobolev norm of data. Moreover suppose the power of nonlinearity is at least cubic.
\begin{lemma}\label{dnls}
FNLSE \eqref{maineq2} is locally well-posed in $H^s(\mathbb{R}^2)$ for $s>s_c$ and $p \geq 3$ in the subcritical sense. For any $\alpha>0,\:p>1,\:d\in \mathbb{N}$, DNLSE \eqref{maineq1} is globally well-posed in $L^2_h$. Moreover they admit conserved mass and energy functionals given by
\begin{equation*}
\begin{split}
M[u(t)] &= \|u(t) \|^2_{L^2(\mathbb{R}^2)},\: E[u(t)] = \frac{1}{2}\int_{\mathbb{R}^2} ||\nabla|^{\frac{\alpha}{2}}u|^2 dx +\frac{\mu}{p+1} \int_{\mathbb{R}^2} |u|^{p+1}dx,\\
M_h[u_h(t)] &=\| u_{h}(t)\|_{L^2_h}^2,\: E_h[u_h(t)]=\frac{1}{2} \| (-\Delta_h)^{\frac{\alpha}{4}} u_{h}(t) \|_{L^2_h}^2 + \frac{\mu}{p+1}\| u_{h}(t) \|_{L^{p+1}_h}^{p+1}.
\end{split}
\end{equation*}
\end{lemma}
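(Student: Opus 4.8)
The plan is to establish the three assertions in sequence: the subcritical local well-posedness of \eqref{maineq2}, the global well-posedness of \eqref{maineq1} in $L^2_h$, and the conservation laws.

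First, for the continuum FNLSE, I would run the standard Kato-type fixed-point argument in the scaling-subcritical regime. Fix $s>s_c=\frac{d}{2}-\frac{\alpha}{p-1}$ with $d=2$. The Duhamel formulation $u(t)=U(t)u_0-i\mu\int_0^t U(t-\tau)|u|^{p-1}u(\tau)\,d\tau$ defines a map on a ball in $C([0,T];H^s)$ (intersected with an appropriate Strichartz space if one wants sharp thresholds, but for $s>s_c$ the purely energy-method/Sobolev-embedding version suffices). The two ingredients are: (i) the unitarity of $U(t)=e^{-it(-\Delta)^{\alpha/2}}$ on $H^s$; (ii) the fractional Leibniz rule together with $H^s(\mathbb{R}^2)\hookrightarrow L^\infty$-type and interpolation estimates to bound $\||u|^{p-1}u\|_{H^s}\lesssim \|u\|_{H^s}^{p}$ (here the hypothesis $p\geq 3$ — equivalently $p-1\geq 2$, an even-ish integer-friendly power — is what makes $z\mapsto|z|^{p-1}z$ sufficiently smooth for the fractional chain rule to apply cleanly, and keeps $s_c\ge d/2-\alpha/(p-1)$ in a range where subcriticality $s>s_c$ is compatible with $H^s\hookrightarrow L^{p+1}$). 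Choosing $T=T(\|u_0\|_{H^s})>0$ small makes the map a contraction; uniqueness and continuous dependence follow from the same estimates. This gives a maximal existence time depending only on $\|u_0\|_{H^s}$, i.e.\ subcriticality in the stated sense.

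Second, for the DNLSE \eqref{maineq1} on $h\mathbb{Z}^2$: as already noted in the excerpt, $(-\Delta_h)^{\alpha/2}$ is a bounded self-adjoint operator on $L^2_h$ and $L^2_h\hookrightarrow L^\infty_h$ (with an $h$-dependent constant, which is irrelevant here since $h$ is fixed), so $u_h\mapsto|u_h|^{p-1}u_h$ is locally Lipschitz on $L^2_h$. Hence the ODE in the Banach space $L^2_h$ has a unique local solution by Picard iteration; local well-posedness in $L^2_h$ is immediate. Global existence then follows from the a priori bound furnished by mass conservation: $\|u_h(t)\|_{L^2_h}=\|u_{0,h}\|_{L^2_h}$ prevents blow-up of the $L^2_h$-norm, and since the local existence time depends only on that norm, the solution extends to all $t\in\mathbb{R}$. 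This works for every $\alpha>0$, $p>1$, $d\in\mathbb{N}$, as claimed.

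Third, the conservation laws. For the discrete equation this is a direct computation: pairing \eqref{maineq1} with $\bar u_h$ in $L^2_h$ and taking imaginary parts gives $\frac{d}{dt}M_h=0$ (using self-adjointness of $(-\Delta_h)^{\alpha/2}$ and that $\langle |u_h|^{p-1}u_h,u_h\rangle$ is real); pairing with $\dot{\bar u}_h$ and taking real parts gives $\frac{d}{dt}E_h=0$. All manipulations are legitimate since $u_h(t)\in L^2_h$ and all operators involved are bounded. For the continuum equation the same formal identities hold; rigor requires only enough regularity, which the $H^s$ solution with $s>s_c$ provides (one may justify the computations on the Schwartz-class approximating sequence and pass to the limit, or use the standard commutator argument). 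The main obstacle, such as it is, is purely bookkeeping in the continuum case: verifying that the chosen $s>s_c$ with $p\geq 3$ indeed makes the nonlinear estimate $\||u|^{p-1}u\|_{H^s}\lesssim\|u\|_{H^s}^p$ hold — this is where one must be slightly careful about the limited smoothness of the nonlinearity when $p$ is not an odd integer, and invoke a fractional chain rule valid for $C^{1,\beta}$ nonlinearities with $\beta$ chosen in terms of $s-s_c$. Everything else is routine.
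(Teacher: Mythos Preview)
The paper does not actually prove this lemma: its entire proof is a citation to \cite[Theorem~1.1]{1534-0392_2015_6_2265} for the continuum statement and \cite[Proposition~4.1]{kirpatrick} for the discrete one. Your sketch supplies precisely the standard arguments those references carry out, and the discrete part (bounded generator $(-\Delta_h)^{\alpha/2}$, locally Lipschitz nonlinearity via $L^2_h\hookrightarrow L^\infty_h$, Picard iteration, then globalize by mass conservation) together with the pairing computations for the conserved quantities are all correct.

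There is, however, a real slip in the continuum argument. You assert that for $s>s_c$ the contraction closes in $C([0,T];H^s)$ alone, with Strichartz needed only ``for sharp thresholds.'' This is false: the nonlinear estimate $\||u|^{p-1}u\|_{H^s}\lesssim\|u\|_{H^s}^p$ via the fractional Leibniz/chain rule produces a factor $\|u\|_{L^\infty}^{p-1}$, which forces $H^s(\mathbb{R}^2)\hookrightarrow L^\infty$, i.e.\ $s>d/2=1$. Here $s_c=1-\tfrac{\alpha}{p-1}<1$, so the whole range $s_c<s\le 1$---including the value $s=\tfrac{\alpha}{2}<1$ actually used in \Cref{mainthm}---is not covered by the pure energy method. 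The cited reference closes exactly this gap by proving Strichartz estimates for $e^{-it(-\Delta)^{\alpha/2}}$ and running the contraction in $C_tH^s\cap L^q_tW^{s,r}_x$ for an admissible pair; the Strichartz norm is essential in the low-regularity subcritical regime, not an optional refinement.
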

\begin{proof}
See \cite[Theorem 1.1]{1534-0392_2015_6_2265} and \cite[Proposition 4.1]{kirpatrick} for the first and second statement, respectively.
\end{proof}
More specifically, our set-up is in the mass supercritical and energy subcritical regime, or equivalently,
\begin{equation}\label{balance}
\frac{2(p-1)}{p+1}<\alpha<2,
\end{equation}
in which every $u_0 \in H^{\frac{\alpha}{2}}(\mathbb{R}^2)$ has a local solution but not necessarily global; for blow-up criteria in the focusing mass supercritical case via localized virial estimates, see \cite{boulenger2016blowup,dinh2018blow}. 

We specify the discretization described in the introduction. For $h>0$, define $d_h:L^2(\mathbb{R}^d)\rightarrow L^2_h$ by
\begin{equation*}
    d_h f(x) = h^{-d}\int_{x+[0,h)^d} f(x^\prime)dx^\prime.
\end{equation*}
Conversely define $p_h:L^2_h\rightarrow L^2(\mathbb{R}^d)$ by
\begin{equation*}
\begin{split}
p_h f(x) &= f(x^\prime) + D^+_h f (x^\prime) \cdot (x-x^\prime),\: x \in x^\prime + [0,h)^d,\: x^\prime \in h\mathbb{Z}^d\\
(D^+_h f)_i (x^\prime) &= \frac{f(x^\prime+he_i)-f(x^\prime)}{h},\: i=1,\dots,d,
\end{split}
\end{equation*}
where $\{e_i\}_{i=1}^d$ generates $\mathbb{Z}^d$. The discretization converges to the continuum solution. 
\begin{theorem}\label{mainthm}
Let $p \geq 3$ and $\max(\frac{8}{7}, \frac{2(p-1)}{p+1})<\alpha<2$. For any arbitrary $u_0 \in H^{\frac{\alpha}{2}}(\mathbb{R}^2)$, let $u \in C([0,T];H^{\frac{\alpha}{2}}(\mathbb{R}^2)),\: u_h \in C([0,T];L^2_h)$ be the well-posed solutions from \Cref{dnls} where $T = T(\| u_0 \|_{H^{\frac{\alpha}{2}}})>0$. Then there exists $C_i = C_i(\| u_0 \|_{H^{\frac{\alpha}{2}}})>0,\: i=1,2$ independent of $h>0$ such that
\begin{equation}\label{error estimate}
    \| p_h u_h (t) - u(t)\|_{L^2(\mathbb{R}^2)} \leq C_1 h^{\frac{\alpha}{2+\alpha}} ( \| u_0 \|_{H^{\frac{\alpha}{2}}}+\| u_0 \|_{H^{\frac{\alpha}{2}}}^p) e^{C_2 |t|},\: t \in [0,T].
\end{equation}
\end{theorem}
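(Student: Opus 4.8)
The plan is to prove \Cref{mainthm} via the now-standard strategy of comparing $p_h u_h$ to $u$ through their Duhamel formulas, exploiting $h$-uniform Strichartz estimates for $U_h(t)$ that follow from the sharp dispersive bound announced in the introduction (the two-dimensional analogue of \eqref{dispersive est2}, namely a $|t|^{-3/4}$-type decay after a Littlewood–Paley cutoff, with frequency loss $(N/h)^{\theta}$ for some $\theta = \theta(\alpha)$). The condition $\alpha > 8/7$ should be exactly what makes the frequency loss summable/absorbable against the regularity $\tfrac{\alpha}{2}$ carried by the data, so that one gets Strichartz estimates on an admissible pair that is uniform in $h$; the condition $\alpha > \tfrac{2(p-1)}{p+1}$ keeps us mass-supercritical/energy-subcritical as in \eqref{balance}, and $p\ge 3$ ensures the nonlinearity is at least cubic so that the multilinear difference estimates close. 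The convergence rate $h^{\alpha/(2+\alpha)}$ is the interpolation exponent between the $O(h)$ consistency error of $p_h, d_h$ at regularity one and the $L^2$ boundedness at regularity zero, evaluated at $s = \tfrac{\alpha}{2}$.

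\textbf{Step 1: Discretization/interpolation error.} First I would record the mapping properties of $d_h$ and $p_h$: they are uniformly bounded $L^2(\mathbb{R}^2)\to L^2_h$ and $L^2_h \to L^2(\mathbb{R}^2)$, one has $p_h d_h \to \mathrm{Id}$, and there is a quantitative estimate $\|p_h d_h f - f\|_{L^2(\mathbb{R}^2)} \lesssim h^{s}\|f\|_{H^s}$ for $0\le s\le 1$ (by interpolation between the trivial $s=0$ bound and the $s=1$ bound coming from the piecewise-linear/averaging construction of $p_h,d_h$). I would likewise need that $d_h$ approximately intertwines the propagators, i.e. $\|(p_h U_h(t) d_h - U(t))f\|_{L^2} \lesssim h^{s'}\,\langle t\rangle\,\|f\|_{H^{s'}}$ for suitable $s'$, which follows from the symbol comparison $|\,|\xi|^\alpha - w_{h,0}(\xi)^{}\,| \lesssim h^{2}|\xi|^{\alpha+2}$ on $|\xi|\lesssim 1/h$ together with the $h$-uniform Strichartz/energy estimates. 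This is the linear heart of the argument.

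\textbf{Step 2: Nonlinear difference estimate via Strichartz.} Writing $v_h := p_h u_h$ and subtracting the Duhamel formulas for $u$ and for $v_h$ (the latter obtained by pushing forward the discrete Duhamel formula through $p_h$ and commuting $p_h$ past $U_h$ up to the error from Step 1), I would set $e(t) := v_h(t) - u(t)$ and estimate $\|e\|$ in the relevant Strichartz space on $[0,T]$. The linear-in-data term contributes $C h^{\alpha/(2+\alpha)}\|u_0\|_{H^{\alpha/2}}$ by Step 1 (the exponent $\tfrac{\alpha}{2+\alpha}$ arising as $\min$ of the two competing powers $s$ and the propagator-mismatch power, optimized — note $\tfrac{\alpha}{2} \cdot \tfrac{2}{2+\alpha} = \tfrac{\alpha}{2+\alpha}$ after balancing against the $h^2$ from the symbol error, consistent with the statement). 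The nonlinear term splits as $|v_h|^{p-1}v_h - |u|^{p-1}u$, which is Lipschitz in $e$ with constant controlled by $\|v_h\|^{p-1} + \|u\|^{p-1}$ in Strichartz norms, plus a ``commutator/consistency'' piece measuring how far $p_h(|u_h|^{p-1}u_h)$ is from $|p_h u_h|^{p-1} p_h u_h$, which is again $O(h^{\alpha/(2+\alpha)})$ by the same interpolation once one controls the discrete solution in $W^{s,q}_h$ uniformly. A continuity/bootstrap argument on $[0,T]$, using that $T$ depends only on $\|u_0\|_{H^{\alpha/2}}$ and that $\|u_h(0)\|_{L^2_h} = \|d_h u_0\|_{L^2_h} \lesssim \|u_0\|_{H^{\alpha/2}}$, closes the estimate and produces the $e^{C_2|t|}$ factor by Gronwall.

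\textbf{The main obstacle} I expect is \emph{Step 1 combined with the frequency loss}: the dispersive estimate for $U_h$ loses a power $(N/h)^{\theta(\alpha)}$, so the resulting Strichartz estimate only holds after a Littlewood–Paley projection and must be summed over dyadic $N \le 1$ against the $h^{-\theta}$ factor; making this sum uniform in $h$ is precisely where $\alpha > 8/7$ enters, and one must check that the regularity $\tfrac{\alpha}{2}$ of the data (transported to the discrete side via $d_h$, which only lands in $L^2_h$, not in a discrete Sobolev space a priori) is enough to pay for it — this forces a careful high/low frequency decomposition of the discrete data and quantitative smoothing/approximation estimates for $d_h$ at the scale $h$. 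The secondary difficulty is bookkeeping the several $O(h^{\alpha/(2+\alpha)})$ error terms (propagator mismatch, nonlinearity–interpolation mismatch, data discretization) so that they all carry the \emph{same} power of $h$ and combine into the single clean bound \eqref{error estimate}; this is routine but needs the interpolation exponents to be chosen consistently, as indicated above.
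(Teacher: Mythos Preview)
Your proposal is correct and follows essentially the same route as the paper: Duhamel comparison of $p_h u_h$ and $u$, split into linear mismatch, nonlinearity--interpolation commutator, and Lipschitz difference, with $h$-uniform Strichartz control on $u_h$ (from the $|t|^{-3/4}$ dispersive bound) feeding a Gronwall inequality to produce the $e^{C_2|t|}$ factor. Two small corrections to your anticipated obstacles: first, $d_h$ \emph{is} bounded $H^s(\mathbb{R}^2)\to H^s_h$ for $s\in[0,1]$, so $d_h u_0\in H^{\alpha/2}_h$ directly and no extra high/low decomposition of the discrete data is needed; second, the threshold $\alpha>8/7$ does not enter through the dyadic sum in $N$ (which is always fine since $N\le 1$) but rather as the condition that the Strichartz pair $(q,r)$ with $\sigma=3/4$ and $q=\tfrac{2\alpha}{2-\alpha+\delta}$ has $r\le\infty$, after which the Sobolev embedding $W^{s,r}_h\hookrightarrow L^\infty_h$ (with $s=\tfrac{\alpha}{2}-(2-\tfrac{3\alpha}{4})(\tfrac12-\tfrac1r)>\tfrac{2}{r}$) gives the $L^q_tL^\infty_h$ bound used in the Gronwall step.
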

\begin{remark}
To estimate the nonlinear part of $p_hu_h - u$ uniformly in $h$, we show that an appropriate space-time Lebesgue norm of $u_h$ is uniformly bounded in $[0,T(\| u_0 \|_{H^{\frac{\alpha}{2}}})]$ (see \Cref{discrete linfty}). However our proof is insufficient to conclude that a similar uniform bound holds in the energy-critical case, and therefore our method does not extend, at least directly, when $\alpha = \frac{2(p-1)}{p+1}$.
\end{remark}
\begin{remark}
The result is local in time, and thus it is of interest to extend \eqref{error estimate} such that the estimate holds for $t \in [0,T_e)$ where $0<T_e\leq \infty$ is the maximal time of existence of \eqref{maineq2}. This extension is not straightforward due to the existence of finite-time blow up solutions in the mass supercritical regime. For example if $T_e<\infty$, then $\lim\limits_{t\rightarrow T_e -} \| u(t) \|_{H^{\frac{\alpha}{2}}(\mathbb{R}^2)}=\infty$. Since $p_h u_h = O_h(1)$ by \Cref{series}, for all $h>0$ we have
\begin{equation*}
\sup_{t\in [0,T_e)}\| p_h u_h (t) - u(t)\|_{H^{\frac{\alpha}{2}}(\mathbb{R}^2)} = \infty. 
\end{equation*}
\end{remark}
\begin{remark}
Suppose \eqref{maineq2} were discretized by another means. Let $A_h$ be a self-adjoint linear operator on $L^2_h$ and let $v_h\in C([0,T];L^2_h)$ be a solution of
\begin{equation}\label{maineq3}
i \dot{v}_h = A_h v_h + N(v_h),\:v_h(x,0)=u_{0,h}(x).
\end{equation}
Recall that $u_h(t),u(t)$ are well-posed $L^2$-solutions of \eqref{maineq1}, \eqref{maineq2}. If $\| u_h(t)-v_h(t)\|_{L^2_h}\lesssim h^\theta$, then $\| p_h v_h(t) - u(t)\|_{L^2(\mathbb{R}^2)} \lesssim h^{\theta^\prime}$ for some $\theta,\theta^\prime>0$ by \eqref{error estimate} and the triangle inequality.
\end{remark}
It is expected that our approach would apply to a general class of discrete models governed by $\{A_h\}$. A priori, $A_h$ is assumed to act on $L^2_h$ and thus its extension to $L^2(\mathbb{R}^d)$ needs to be defined, after which, the limit of $A_h$ as $h \rightarrow 0$, if it exists, is considered. Let $m_h \in A\left(\frac{\mathbb{T}^d}{h}\right)$ and define $\mathcal{F}_h( A_h f)(\xi) = m_h(\xi)\mathcal{F}_h f (\xi)$ where
\begin{equation*}
A\left(\frac{\mathbb{T}^d}{h}\right) = \{f \in L^\infty\left(\frac{\mathbb{T}^d}{h}\right):\mathcal{F}_h^{-1}f \in L^1_h\}.    
\end{equation*}
Denote $\nu_h = \mathcal{F}_h^{-1}m_h$. Since the Fourier coefficients are absolutely integrable, $\nu_h$ can be interpreted as a complex Borel measure on $\mathbb{R}^d$ given by
\begin{equation*}
    \nu_h(x) = \sum_{y \in h\mathbb{Z}^d} \mathcal{F}_h^{-1}m_h(y) \delta_y(x),
\end{equation*}
where $\delta_y$ is the Dirac mass at $y \in h \mathbb{Z}^d$. Then for $f \in L^2_h,\ x \in h\mathbb{Z}^d$,
\begin{equation*}
    A_h f (x) = \nu_h \ast_h f (x) := h^d \sum_{y\in h\mathbb{Z}^d}\mathcal{F}_h^{-1}m_h(y)f(x-y). 
\end{equation*}
For $f \in C^\infty_c(\mathbb{R}^d)$, we have $f \ast \delta_{y} (x) = \int_{\mathbb{R}^d} f(x-y^\prime) d\delta_{y}(y^\prime) = f(x-y)$, and therefore
\begin{equation}\label{convolution}
    A_h f (x) = h^d \nu_h \ast f(x),
\end{equation}
for $f \in \bigcup\limits_{p \in [1,\infty]} L^p(\mathbb{R}^d)$ and 
\begin{equation}\label{convolution2}
\| A_h f \|_{L^p(\mathbb{R}^d)} \leq h^d \| \nu_h \|_{TV} \| f \|_{L^p(\mathbb{R}^d)},    
\end{equation}
where $\| \nu_h \|_{TV}$ measures the total variation.

\begin{proposition}\label{multiplier}
Define $A_h$ as \eqref{convolution}. Then $A_h:L^p(\mathbb{R}^d)\rightarrow L^p(\mathbb{R}^d)$ is bounded for all $p \in [1,\infty]$ with the operator norms satisfying
\begin{equation*}
    \| A_h \|_{L^p\rightarrow L^p} \geq \| A_h \|_{L^2\rightarrow L^2} = \| m_h \|_{L^\infty(\frac{\mathbb{T}^d}{h})}.
\end{equation*}
\end{proposition}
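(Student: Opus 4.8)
The plan is to use that, by \eqref{convolution}, $A_h$ is convolution against the finite complex Borel measure $\mu_h := h^d\nu_h = \sum_{y\in h\mathbb{Z}^d} h^d\,\mathcal{F}_h^{-1}m_h(y)\,\delta_y$, whose total variation $\|\mu_h\|_{TV}=h^d\|\mathcal{F}_h^{-1}m_h\|_{L^1_h}$ is finite exactly because $m_h\in A(\mathbb{T}^d/h)$. The $L^p\to L^p$ boundedness for every $p\in[1,\infty]$ is then immediate from Young's inequality for convolution with a measure, which is \eqref{convolution2}; moreover both extreme bounds are attained (an approximate identity for $p=1$, a bounded function matching the polar phase of $\mu_h$ for $p=\infty$), so $\|A_h\|_{L^1\to L^1}=\|A_h\|_{L^\infty\to L^\infty}=\|\mu_h\|_{TV}$.

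For the identity $\|A_h\|_{L^2\to L^2}=\|m_h\|_{L^\infty(\mathbb{T}^d/h)}$, I would pass to the continuum Fourier transform. Bookkeeping of the two Fourier conventions gives $\widehat{\mu_h}(\xi)=\sum_{y\in h\mathbb{Z}^d}h^d\,\mathcal{F}_h^{-1}m_h(y)\,e^{-iy\cdot\xi}=\mathcal{F}_h\big(\mathcal{F}_h^{-1}m_h\big)(\xi)$, so $\widehat{\mu_h}$ is the $\tfrac{2\pi}{h}$-periodic extension $\widetilde{m}_h$ of $m_h$ from the cell $\mathbb{T}^d/h$ to $\mathbb{R}^d$. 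Hence $A_h$ is the Fourier multiplier on $\mathbb{R}^d$ with bounded symbol $\widetilde{m}_h$, and Plancherel gives $\|A_h\|_{L^2\to L^2}=\|\widetilde{m}_h\|_{L^\infty(\mathbb{R}^d)}=\|m_h\|_{L^\infty(\mathbb{T}^d/h)}$. The matching lower bound is the usual one for multiplier norms: test against $f$ with $\widehat f=\chi_E$, where $E\subset\{\,|\widetilde m_h|>\|m_h\|_{L^\infty}-\varepsilon\,\}$ has positive finite Lebesgue measure (shrink to a large ball if that set is unbounded), and let $\varepsilon\to0$.

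It remains to show $\|A_h\|_{L^p\to L^p}\ge\|A_h\|_{L^2\to L^2}$ for each $p\in[1,\infty]$. Since $A_h$ is translation-invariant, a standard duality argument — the reflection isometry $f\mapsto f(-\,\cdot\,)$ of each $L^q(\mathbb{R}^d)$ conjugates $A_h$ to its Banach-space transpose acting on $L^{p'}$ — gives $\|A_h\|_{L^{p'}\to L^{p'}}=\|A_h\|_{L^p\to L^p}$ for the conjugate exponent $p'$ (for $p\in\{1,\infty\}$ this is already contained in the first paragraph). Since $\tfrac12=\tfrac12\big(\tfrac1p+\tfrac1{p'}\big)$, Riesz–Thorin interpolation between $L^p$ and $L^{p'}$ yields
\begin{equation*}
\|A_h\|_{L^2\to L^2}\ \le\ \|A_h\|_{L^p\to L^p}^{1/2}\,\|A_h\|_{L^{p'}\to L^{p'}}^{1/2}\ =\ \|A_h\|_{L^p\to L^p},
\end{equation*}
which is the claim.

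None of the steps is a real obstacle; the only points that need attention are the careful tracking of the factors of $h^d$ and $2\pi$ when identifying $\widehat{\mu_h}$ as the periodic extension of $m_h$, and the attainment of the essential supremum in the multiplier lower bound when the super-level sets of $|\widetilde m_h|$ have infinite Lebesgue measure — both routine.
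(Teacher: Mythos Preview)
Your argument is correct and follows essentially the same route as the paper: boundedness on every $L^p$ via Young's inequality for convolution with the finite measure $h^d\nu_h$, the identification of the $L^2\to L^2$ norm with $\|m_h\|_{L^\infty(\mathbb{T}^d/h)}$ through the Fourier multiplier description, and the inequality $\|A_h\|_{L^2\to L^2}\le \|A_h\|_{L^p\to L^p}$ from duality plus Riesz--Thorin. You supply more detail than the paper (the periodic-extension bookkeeping and the explicit lower-bound test functions), but the underlying strategy is the same.
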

\begin{proof}
Since $A_h$ is a convolution against a finite measure with bounded symbol $m_h$, $A_h$ is a translation-invariant bounded linear operator on $L^p(\mathbb{R}^d)$ for all $p\in [1,\infty]$ that satisfies \eqref{convolution2}. Since $A_h$ is bounded on $L^p(\mathbb{R}^d)$, it is bounded on $L^{p^\prime}(\mathbb{R}^d)$ by duality. By the Riesz-Thorin Theorem, we have
\begin{equation*}
    \| A_h \|_{L^2 \to L^2} \leq \| A_h \|_{L^p \to L^p}^{\frac{1}{2}} \| A_h \|_{L^{p^\prime} \to L^{p^\prime}}^{\frac{1}{2}} = \| A_h \|_{L^p \to L^p}.
\end{equation*}
The last equality is given by the fact that any translation-invariant bounded linear operator on $L^2(\mathbb{R}^d)$ is given by a bounded multiplier on the Fourier space. 
\end{proof}
As an example, consider two classes of multipliers
\begin{equation}\label{multiplier2}
\sigma_h(\xi) = \Big(\frac{4}{h^2}\sum_{i=1}^d \sin^2(\frac{h\xi_i}{2})\Big)^{\frac{\alpha}{2}},\ m_h(\xi) = c_{d,\alpha} h^d \sum_{z \in h\mathbb{Z}^d\setminus \{0\}} \frac{1-\cos \xi\cdot z}{|z|^{d+\alpha}},
\end{equation}
where $c_{d,\alpha} = \frac{4^{\frac{\alpha}{2}}\Gamma(\frac{d+\alpha}{2})}{\pi^{\frac{d}{2}}|\Gamma(-\frac{\alpha}{2})|}$. It can be verified that $m_h$ defines 
\begin{equation}\label{long-range}
A_h f(x) = c_{d,\alpha}h^d \sum\limits_{y\in h\mathbb{Z}^d \setminus \{x\}}\frac{f(x)-f(y)}{|x-y|^{d+\alpha}}. 
\end{equation}
\begin{proposition}
Both $(-\Delta_h)^{\frac{\alpha}{2}}$ and $A_h$, where $A_h$ is as \eqref{long-range}, are divergent as $h\to 0$ in the space of bounded linear operators on $L^2(\mathbb{R}^d)$ in the uniform operator topology. On the other hand, $(-\Delta_h)^{\frac{\alpha}{2}},\ A_h$ converge to $(-\Delta)^{\frac{\alpha}{2}}$ strongly in $L^2$ in the Schwartz class.
\end{proposition}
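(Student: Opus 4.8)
The plan is to reduce both assertions to elementary statements about the Fourier symbols $\sigma_h,m_h$ in \eqref{multiplier2}, using that each is bounded (for fixed $h$) and $\tfrac{2\pi}{h}\mathbb{Z}^d$-periodic, so that the $L^2(\mathbb{R}^d)$-extensions built in \eqref{convolution} are Fourier multiplier operators with those very symbols. For the divergence claim I would invoke \Cref{multiplier}: $\|(-\Delta_h)^{\frac\alpha2}\|_{L^2\to L^2}=\|\sigma_h\|_{L^\infty(\mathbb{T}^d/h)}$ and $\|A_h\|_{L^2\to L^2}=\|m_h\|_{L^\infty(\mathbb{T}^d/h)}$, after which a single point evaluation suffices. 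At $\xi=(\pi/h,0,\dots,0)$ one has $\sin^2(h\xi_1/2)=1$, so in fact $\|\sigma_h\|_{L^\infty(\mathbb{T}^d/h)}=(4d/h^2)^{\frac\alpha2}=(2\sqrt d)^\alpha h^{-\alpha}$; and for the long-range symbol, retaining only the two nonnegative summands $z=\pm he_1$ in \eqref{multiplier2} gives $m_h(\pi/h,0,\dots,0)\ge 4c_{d,\alpha}h^{-\alpha}$. Since both operator norms are therefore $\gtrsim h^{-\alpha}\to\infty$, while any family converging in the uniform operator topology as $h\to0$ is eventually norm-bounded, neither $(-\Delta_h)^{\frac\alpha2}$ nor $A_h$ converges in that topology.

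For the strong convergence on the Schwartz class, fix $f\in\mathcal{S}(\mathbb{R}^d)$. Computing the Fourier transform of $h^d\nu_h$ from the representation \eqref{convolution} shows the extension acts as the Fourier multiplier whose symbol is the $\tfrac{2\pi}{h}\mathbb{Z}^d$-periodic extension of $m_h$ (resp. $\sigma_h$), which, since the closed forms in \eqref{multiplier2} are already periodic, equals $m_h$ (resp. $\sigma_h$) on all of $\mathbb{R}^d$. By Plancherel, $\|A_hf-(-\Delta)^{\frac\alpha2}f\|_{L^2}^2\simeq\int_{\mathbb{R}^d}|m_h(\xi)-|\xi|^\alpha|^2|\widehat f(\xi)|^2\,d\xi$, and analogously with $m_h$ replaced by $\sigma_h$. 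I would conclude by dominated convergence. The integrand tends to $0$ pointwise: for $\sigma_h$ because $\tfrac2h\sin(\tfrac{h\xi_i}{2})\to\xi_i$, and for $m_h$ because the lattice sum is a Riemann sum for $c_{d,\alpha}\int_{\mathbb{R}^d}\frac{1-\cos(\xi\cdot z)}{|z|^{d+\alpha}}\,dz=|\xi|^\alpha$ (the classical identity fixing the normalization $c_{d,\alpha}$), convergence being obtained by splitting at $|z|\le\delta$ versus $|z|>\delta$ and using $\alpha<2$ to make the small-$|z|$ contributions $O(\delta^{2-\alpha})$ uniformly in $h$. Since $f$ is Schwartz, $|\xi|^{2\alpha}|\widehat f(\xi)|^2\in L^1$, so it remains to produce the uniform bound $|m_h(\xi)-|\xi|^\alpha|\lesssim|\xi|^\alpha$ for $h\in(0,1]$; for $\sigma_h$ this is immediate, as $|\tfrac2h\sin(\tfrac{h\xi_i}{2})|\le|\xi_i|$ forces $0\le\sigma_h(\xi)\le|\xi|^\alpha$.

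The remaining uniform bound $m_h(\xi)\lesssim|\xi|^\alpha$ for the long-range symbol is the genuinely technical point, and I expect it to be the main obstacle. For $|\xi|\lesssim1/h$ I would split the defining sum at the scale $|z|\sim1/|\xi|$: on the short-range part use $1-\cos(\xi\cdot z)\le\tfrac12|\xi|^2|z|^2$, yielding $|\xi|^2\sum_{0<|z|\le1/|\xi|}h^d|z|^{2-d-\alpha}\simeq|\xi|^2|\xi|^{\alpha-2}=|\xi|^\alpha$; on the long-range part use $1-\cos(\xi\cdot z)\le2$, yielding $\sum_{|z|>1/|\xi|}h^d|z|^{-d-\alpha}\simeq|\xi|^\alpha$ — both the summability and the scaling exponents relying on $0<\alpha<2$. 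For $|\xi|\gtrsim1/h$, periodicity gives $m_h(\xi)\le\|m_h\|_{L^\infty}\lesssim h^{-\alpha}\lesssim|\xi|^\alpha$, so $|m_h(\xi)-|\xi|^\alpha|\le m_h(\xi)+|\xi|^\alpha\lesssim|\xi|^\alpha$ there too. With the dominating function in hand, dominated convergence yields $A_hf\to(-\Delta)^{\frac\alpha2}f$ and $(-\Delta_h)^{\frac\alpha2}f\to(-\Delta)^{\frac\alpha2}f$ in $L^2(\mathbb{R}^d)$ for every $f\in\mathcal{S}(\mathbb{R}^d)$.
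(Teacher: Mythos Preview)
Your approach is correct and essentially the same as the paper's: both establish divergence via \Cref{multiplier} together with the lower bound $\|\sigma_h\|_{L^\infty},\|m_h\|_{L^\infty}\gtrsim h^{-\alpha}$, and both obtain strong $L^2$ convergence on the Schwartz class by Plancherel plus dominated convergence, using the pointwise limits $\sigma_h(\xi),m_h(\xi)\to|\xi|^\alpha$ and the uniform domination $|\sigma_h(\xi)|,|m_h(\xi)|\lesssim|\xi|^\alpha$. The paper simply cites \cite{kirpatrick} for the latter two ingredients, whereas you supply the Riemann-sum and scale-splitting arguments explicitly; your version is more self-contained. One small slip: evaluating $\sigma_h$ at $(\pi/h,0,\dots,0)$ gives $(4/h^2)^{\alpha/2}$, not $(4d/h^2)^{\alpha/2}$ (the latter is the actual maximum, attained at $(\pi/h,\dots,\pi/h)$), but either bound yields $\gtrsim h^{-\alpha}$ and the argument goes through unchanged.
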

\begin{proof}
By direct computation,
\begin{equation*}
\| \sigma_h \|_{L^\infty(\frac{\mathbb{T}^d}{h})},\ \|m_h \|_{L^\infty(\frac{\mathbb{T}^d}{h})} \gtrsim_d h^{-\alpha}, 
\end{equation*}
and hence the divergence by \Cref{multiplier}. The statements on strong convergence in $L^2$ follows as \cite[Lemma 3.9]{kirpatrick}, noting that
\begin{equation*}
    \sigma_h(\xi),\ m_h(\xi) \xrightarrow[h\rightarrow 0]{}|\xi|^\alpha,\ \forall \xi \in \mathbb{R}^d,
\end{equation*}
followed by the Dominated Convergence Theorem to interchange the limit as $h$ tends to zero and the integral in $\xi$, justified by
\begin{equation*}
 |\sigma_h(\xi)|,\ |m_h(\xi)| \lesssim |\xi|^\alpha,   
\end{equation*}
independent of $h>0$.
\end{proof}
A potential issue with $m_h$ in \eqref{multiplier2} is that the Euclidean metric $|\cdot|$ does not yield the physically-relevant distance between two points on $h\mathbb{Z}^d$ when $d \geq 2$. Define $m_h^q(\xi) = h^d \sum\limits_{z \in h\mathbb{Z}^d\setminus \{0\}} \frac{1-\cos \xi\cdot z}{|z|_q^{d+\alpha}}$ where $|\cdot|_q$ denotes the $l^q$ norm for $q \in [1,\infty]$.
\begin{proposition}
For all $\xi\in\mathbb{R}^d$, $m_h^q(\xi)\xrightarrow[h\rightarrow 0]{} c |\xi|^\alpha$ for some non-zero constant $c$ if and only if $q=2$. For all $q \in [1,\infty]\setminus\{2\}$, an operator $A_h^q$ defined by $m_h^q$ is non-negative and self-adjoint on $L^2(\mathbb{R}^d)$, defined on the dense domain $H^\alpha(\mathbb{R}^d)$.
\end{proposition}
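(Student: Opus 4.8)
\emph{Proof proposal.} Everything reduces to understanding the pointwise limit $I_q(\xi):=\lim_{h\to0}m_h^q(\xi)$; the plan is to (i) identify $I_q$ with an explicit integral, (ii) extract its homogeneity and its value for $q=2$, (iii) for $q\neq2$ rule out the shape $c|\xi|^\alpha$ by passing from the symbol to its kernel, and (iv) read off the operator-theoretic statement. Throughout we take $d\geq2$; for $d=1$ all $\ell^q$-norms coincide and the equivalence is vacuous.

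For (i), I would view $m_h^q(\xi)=h^d\sum_{z\in h\mathbb Z^d\setminus\{0\}}g_\xi(z)$ with $g_\xi(z)=(1-\cos\xi\cdot z)\,|z|_q^{-(d+\alpha)}$ as a Riemann sum. Since $|1-\cos\theta|\leq\tfrac12\theta^2$ and $|z|_q\simeq|z|$, one has $g_\xi\in L^1(\mathbb R^d)$ — an integrable singularity $|z|^{2-d-\alpha}$ at the origin because $\alpha<2$, decay $|z|^{-d-\alpha}$ at infinity because $\alpha>0$. The one delicate point is the origin: isolating $\{|z|\leq\delta\}$, a dyadic-shell count gives $h^d\!\!\sum_{0<|z|\leq\delta}|g_\xi(z)|\lesssim|\xi|^2\,\delta^{2-\alpha}$ uniformly in $h$, while on $\{|z|>\delta\}$ the summand is continuous with integrable tail so the Riemann sum converges to the integral there; letting $\delta\to0$ yields
\[
\lim_{h\to0}m_h^q(\xi)=I_q(\xi):=\int_{\mathbb R^d}\frac{1-\cos(\xi\cdot z)}{|z|_q^{\,d+\alpha}}\,dz,\qquad I_q(\xi)>0\ (\xi\neq0),\quad I_q(0)=0 .
\]
For (ii), the substitution $z\mapsto z/\lambda$ together with $|\lambda z|_q=\lambda|z|_q$ gives $I_q(\lambda\xi)=\lambda^\alpha I_q(\xi)$, so $I_q(\xi)=|\xi|^\alpha\,\Phi_q(\xi/|\xi|)$ for a continuous, strictly positive $\Phi_q$ on $S^{d-1}$; when $q=2$ the integrand depends on $\xi$ only through $\xi\cdot z$ and $|z|$, so $\Phi_2$ is rotation-invariant, hence constant, and $I_2(\xi)=c_{d,\alpha}|\xi|^\alpha$ with $c_{d,\alpha}>0$ — the usual symbol of $(-\Delta)^{\alpha/2}$. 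This is the ``if'' direction.

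Step (iii) is where the real work lies. Suppose $I_q(\xi)=c|\xi|^\alpha$ for some $c$; necessarily $c>0$ since $I_q>0$ off the origin, so the Fourier multiplier operator $L_q$ with symbol $I_q$ equals $c(-\Delta)^{\alpha/2}$ on $\mathcal S(\mathbb R^d)$. Rewriting $1-\cos\xi\cdot z$ through shifts on the Fourier side and symmetrizing in $z$ identifies $L_q$ with the singular integral $L_q f(x)=\mathrm{p.v.}\!\int (f(x)-f(y))\,|x-y|_q^{-(d+\alpha)}\,dy$; hence for $f\in\mathcal S$ and $x_0\notin\operatorname{supp}f$ the principal value disappears and $L_q f(x_0)=-\big(f*|\cdot|_q^{-(d+\alpha)}\big)(x_0)$, whereas the classical singular-integral formula for $(-\Delta)^{\alpha/2}$ gives $c(-\Delta)^{\alpha/2}f(x_0)=-c\,c_{d,\alpha}\big(f*|\cdot|_2^{-(d+\alpha)}\big)(x_0)$ at the same $x_0$. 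Equating these and testing against bumps concentrating at an arbitrary $z_0\neq0$ forces $|z_0|_q^{-(d+\alpha)}=c\,c_{d,\alpha}\,|z_0|_2^{-(d+\alpha)}$ for all $z_0\neq0$; evaluating at $z_0=e_1$ pins $c\,c_{d,\alpha}=1$, so $|z_0|_q=|z_0|_2$ for every $z_0$, which fails for $q\neq2$ when $d\geq2$ (e.g. $z_0=e_1+e_2$ gives $2^{1/q}\neq\sqrt2$). The contradiction finishes the equivalence. I expect this step to be the main obstacle: comparing $I_q$ along two fixed directions leads to unpleasant integrals, and the point is instead to observe that the symbol determines the off-diagonal kernel, so anisotropy of $|\cdot|_q$ is detected at once; the only care needed is the routine justification of the singular-integral representation of $L_q$ and of the off-support identity, both available once $g_\xi\in L^1$ is known.

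For (iv), $m_h^q$ is real and termwise nonnegative, so the operator $A_h^q$ (the convolution operator of \Cref{multiplier} with symbol $m_h^q$) satisfies $\langle A_h^q f,f\rangle=\int m_h^q(\xi)|\widehat f(\xi)|^2\,d\xi\geq0$ and, having a real symbol, is symmetric; it is bounded on $L^2(\mathbb R^d)$ since $\|m_h^q\|_{L^\infty}\leq 2h^d\sum_{z\in h\mathbb Z^d\setminus\{0\}}|z|_q^{-(d+\alpha)}\lesssim h^{-\alpha}<\infty$, hence self-adjoint and defined on all of $L^2$, in particular on the dense subspace $H^\alpha(\mathbb R^d)$. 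For the sharp statement one passes to the limiting operator $A^q$ with symbol $I_q$: by (ii) $I_q(\xi)\simeq|\xi|^\alpha$ (with $\Phi_q$ continuous and positive on the compact sphere), which forces the maximal domain $\{f\in L^2:\,I_q\widehat f\in L^2\}$ of $A^q$ to be exactly $H^\alpha(\mathbb R^d)$, on which $A^q$ is non-negative and self-adjoint by the spectral theorem, its symbol being real and nonnegative.
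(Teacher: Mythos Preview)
Your proof is correct, and steps (i), (ii), (iv) track the paper's argument closely (indeed more carefully: you justify the Riemann-sum convergence near the origin, and you untangle the ambiguity in the statement by treating both the bounded operator $A_h^q$ and the limiting multiplier $A^q$ with symbol $I_q$, whereas the paper's proof silently passes to the latter). The real divergence is in step (iii). The paper, after obtaining $\lim_{h\to0}m_h^q(\xi)=|\xi|^\alpha\int\frac{1-\cos z_d}{|\rho(\xi')z|_q^{d+\alpha}}\,dz$, simply asserts that this integral, viewed as a function on $S^{d-1}$, is constant if and only if $q=2$, ``observing the norm-invariance under rotation if and only if $q=2$.'' That remark settles the ``if'' direction but not the ``only if'': non-invariance of the integrand under rotation does not by itself preclude the integral from being constant. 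Your route avoids this gap entirely by passing from the symbol to the off-diagonal kernel: since $I_q$ determines the singular-integral representation of $L_q$, equality $I_q(\xi)=c|\xi|^\alpha$ forces $|z|_q^{-(d+\alpha)}=c\,c_{d,\alpha}|z|_2^{-(d+\alpha)}$ pointwise, hence $|\cdot|_q\equiv|\cdot|_2$. This is a genuinely different and more complete argument; it also makes transparent why $d\geq2$ is needed, a point the paper leaves implicit. One cosmetic slip: with the paper's normalization, $m_h^q$ carries no $c_{d,\alpha}$ prefactor, so $I_2(\xi)=c_{d,\alpha}^{-1}|\xi|^\alpha$ rather than $c_{d,\alpha}|\xi|^\alpha$; this does not affect your step (iii).
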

\begin{proof}
Let $\xi = |\xi|\xi^\prime$ where $\xi^\prime \in S^{d-1}$, the unit sphere. Let $\rho(\xi^\prime) \in \mathbb{R}^{d \times d}$ be a rotation operator that takes the $d^{\text{th}}$ standard basis vector to $\xi^\prime$, i.e., $\xi^\prime = \rho(\xi^\prime)e_d$. Then $\alpha<2$ justifies the limit of Riemann sum, and by changing variables,
\begin{equation*}
    \lim_{h\rightarrow 0}m_h^q(\xi) = \int_{\mathbb{R}^d} \frac{1-\cos \xi \cdot z}{|z|_q^{d+\alpha}}dz = |\xi|^\alpha \int \frac{1-\cos z_d}{|\rho(\xi^\prime)z|_q^{d+\alpha}}dz.
\end{equation*}
A priori, the integral in the last expression, call it $I(\xi)$, reduces to a continuous function on $S^{d-1}$ that is constant if and only if $q=2$, observing the norm-invariance under rotation if and only if $q=2$. The domain of $A_q^h$ consists of $f \in L^2(\mathbb{R}^d)$ such that $|\xi|^\alpha I(\xi)\widehat{f} \in L^2$. Observing that $\inf\limits_{\xi\in\mathbb{R}^d}|I(\xi)|>0$ due to the norm equivalence of $\{|\cdot|_q\}$ and
\begin{equation*}
    I(\xi) \gtrsim \int \frac{1-\cos z_d}{|\rho(\xi^\prime)z|^{d+\alpha}}dz = \int \frac{1-\cos z_d}{|z|^{d+\alpha}}dz = c_{d,\alpha}^{-1}>0,
\end{equation*}
it follows that $D(A_h^q) = H^\alpha(\mathbb{R}^d)$. That $A_h^q$ is non-negative and self-adjoint follows from $I \geq 0$.

\end{proof}
The continuum limit in higher dimensions, therefore, depends on the geometry of the underlying discrete model. This would potentially lead to complications on a spatial domain with an irregular lattice structure, which we leave as an open-ended thought. To this end, our analysis is restricted to $(-\Delta_h)^{\frac{\alpha}{2}}$.

To show the main result, linear dispersive estimates of the discrete evolution are developed. Let $\psi \in C^\infty_c((-2\pi,2\pi);[0,1])$ be an even function where $\psi = 1$ for $\xi \in [-\pi,\pi]$ and let $\eta(\xi):= \psi(|\xi|)-\psi(2|\xi|)$. For dyadic integers $N \leq 1$, define Littlewood-Paley projections given by
\begin{equation*}
    P_N = P_{N,h} := \mathcal{F}^{-1} \eta\Big(\frac{h\xi}{N}\Big)\mathcal{F},
\end{equation*}
where $\mathcal{F}$ is the Fourier transform on $\mathbb{R}^d$. Since $\xi \in \frac{\mathbb{T}^d}{h}$, $P_N$ is a smooth projector onto $\frac{\pi}{2}\frac{N}{h}\leq|\xi|\leq 2\pi \frac{N}{h}$ and altogether resolves the identity
\begin{equation*}
    \sum_{N \leq 1} P_N = Id.
\end{equation*}
The sum has an upper bound in $N$ since $h\xi = O_d(1)$. 

Adopting the notations in \cite{borovyk2017klein}, define subsets of $M:=\mathbb{T}^2\setminus\{0\}$ given by
\begin{equation*}
    K_3 = \{(\pm \frac{\pi}{2},\pm \frac{\pi}{2})\},\: K_2 = \{\xi \in M\setminus K_3: \det D^2w(\xi)=0\},\: K_1 = M \setminus (K_2 \cup K_3),
\end{equation*}
where $w(\xi) = \Big(\sum\limits_{i=1}^d \sin^2(\frac{\xi_i}{2})\Big)^{\frac{\alpha}{2}}$. The main proposition concerns a family of frequency-localized dispersive estimates with sharp time decay. Furthermore the lower bounds of implicit constants blow up both in the wave and Schr\"odinger limit. 

\begin{proposition}\label{conjecture} For all $t>0,\: N \leq 1,\: 0<h\leq 1,\: 1<\alpha<2$, there exists $0<C_i(\alpha)<\infty,\: i=1,2,3$ such that
\begin{equation}\label{prove}
\|U_h(t)P_N f \|_{L^\infty_h} \leq
\begin{cases}
C_3(\alpha)(\frac{N}{h})^{2-\frac{3}{4}\alpha}|t|^{-\frac{3}{4}}\| f \|_{L^1_h}, & supp(\eta(\frac{\cdot}{N}))\cap K_3 \neq \emptyset\\
C_2(\alpha)(\frac{N}{h})^{2-\frac{5}{6}\alpha}|t|^{-\frac{5}{6}}\| f \|_{L^1_h}, & supp(\eta(\frac{\cdot}{N}))\cap (K_2\setminus K_3) \neq \emptyset\\
C_1(\alpha)(\frac{N}{h})^{2-\alpha}|t|^{-1}\| f \|_{L^1_h}, & supp(\eta(\frac{\cdot}{N}))\cap (K_1\setminus K_2 \cup K_3) \neq \emptyset,
\end{cases}
\end{equation}
and
\begin{equation}\label{lower bound}
C_3(\alpha) \gtrsim_\eta (2-\alpha)^{-\frac{1}{4}},\:C_2(\alpha) \gtrsim_\eta (\alpha-1)^{\frac{2}{3}-\frac{5\alpha}{12}},\:C_1(\alpha) \gtrsim_\eta (\alpha-1)^{-\frac{1}{2}}.   
\end{equation}
\end{proposition}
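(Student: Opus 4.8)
The plan is to reduce \eqref{prove} to a single-variable oscillatory integral bound after the standard scaling and localization steps, and then handle the three cases according to the worst-case degeneracy of the phase on the support of $\eta(\cdot/N)$. First I would write $U_h(t)P_N f = K_{h,N,t} \ast_h f$ with kernel
\begin{equation*}
K_{h,N,t}(x) = (2\pi)^{-2}\int_{\frac{\mathbb{T}^2}{h}} e^{i(x\cdot\xi - t\,\sigma_h(\xi))}\,\eta\Big(\frac{h\xi}{N}\Big)\,d\xi,
\end{equation*}
so that by Young's inequality the claim is equivalent to $\|K_{h,N,t}\|_{L^\infty_h}\lesssim (N/h)^{2-\beta\alpha}|t|^{-\beta}$ with $\beta\in\{3/4,5/6,1\}$. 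Changing variables $\xi\mapsto \frac{N}{h}\xi$ sends the integral to $(\frac{N}{h})^2\int_{\mathbb{R}^2} e^{i(\frac{N}{h}x\cdot\xi - t(\frac{N}{h})^\alpha w_N(\xi))}\eta(\xi)\,d\xi$, where $w_N(\xi) = (\frac{h}{N})^\alpha\sigma_h(\frac{N}{h}\xi) = \big(\frac{4}{N^2}\sum_i \sin^2(\frac{N\xi_i}{2})\big)^{\alpha/2}$ is smooth on $\mathrm{supp}\,\eta$ uniformly in $N\le 1$ and converges to $w(\xi)$. Absorbing $s := t(\frac{N}{h})^\alpha$ and $y := \frac{N}{h}x$, it suffices to prove the $h$-free, $N$-free estimate
\begin{equation*}
\sup_{y\in\mathbb{R}^2}\Big|\int_{\mathbb{R}^2} e^{i(y\cdot\xi - s\,w_N(\xi))}\,\eta(\xi)\,d\xi\Big| \lesssim_\alpha |s|^{-\beta},
\end{equation*}
with $\beta$ dictated by which of $K_1,K_2,K_3$ meets $\mathrm{supp}\,\eta$; then \eqref{prove} follows by substituting back. (I would also note the stationary-phase triviality that if $\mathrm{supp}\,\eta$ is bounded away from all critical points of $y\cdot\xi - s\,w_N(\xi)$ one gets rapid decay, and combine dyadic pieces by summability of the geometric series, which is why I only need the estimate on the worst stratum touched.)

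Next I would set up the classification of degeneracies of the phase $\phi_{y,s}(\xi) = y\cdot\xi - s\,w_N(\xi)$. At a stationary point $\nabla\phi = 0$ the decay is governed by $\det D^2 w_N(\xi)$: where this is nonzero we are in $K_1$ and nondegenerate stationary phase in two dimensions gives $|s|^{-1}$; where it vanishes but we are not at a cusp point we are in $K_2\setminus K_3$, and where the extra fourth-order vanishing occurs we are at the four points of $K_3$. The key structural input — which I would either prove directly by computing $D^2 w_N$ and its kernel, or cite from the stated description following \cite{borovyk2017klein} — is that $K_2$ is a one-dimensional embedded smooth curve on which the phase is, in an adapted coordinate system, a fold-type (i.e.\ $A_2$, cubic in the normal direction) singularity away from the four symmetric points $(\pm\pi/2,\pm\pi/2)$, where it is a cusp ($A_3$, quartic). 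Granting this, in the $K_2\setminus K_3$ case I would diagonalize $D^2 w_N$ at the degenerate point, use the implicit function theorem to straighten the one nonvanishing second-order direction (yielding a clean $|s|^{-1/2}$ from that variable), and in the remaining variable expand the phase: the cubic term is nonvanishing, so the Van der Corput lemma of order three gives $|s|^{-1/3}$, for a total of $|s|^{-1/2-1/3} = |s|^{-5/6}$. In the $K_3$ case the same procedure gives $|s|^{-1/2}$ from the good direction, but the cubic coefficient now vanishes while the quartic does not, so Van der Corput of order four gives $|s|^{-1/4}$, total $|s|^{-3/4}$. I would make the bounds uniform in the frequency parameter $y$ and in $N\le1$ by keeping track of the $C^3$ and $C^4$ norms of $w_N$ on $\mathrm{supp}\,\eta$ (uniformly bounded) together with lower bounds on the relevant derivatives (uniformly bounded below), invoking the stability result \cite{ikromov2011uniform} under linear perturbations to absorb the $y\cdot\xi$ term into the phase without degrading the exponent.

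For the second part, the lower bounds \eqref{lower bound} on $C_i(\alpha)$, the plan is an explicit leading-order asymptotic analysis of the three oscillatory integrals as functions of $\alpha$ near the endpoints, which is where the bulk of the work lies. The mechanism is that the coefficients appearing in the van der Corput / stationary-phase leading terms are themselves functions of $\alpha$ through the derivatives of $w(\xi) = (\sin^2(\xi_1/2)+\sin^2(\xi_2/2))^{\alpha/2}$ at the relevant critical points, and these coefficients degenerate at $\alpha=1$ or $\alpha=2$. Concretely: for $C_3(\alpha)$ I would compute the leading term of the $|s|^{-3/4}$ asymptotic at a point of $K_3$ in a \emph{superadapted} coordinate system (per \cite{greenblatt2009asymptotic}) and extract its $\alpha$-dependence; the fourth-derivative (cusp) coefficient vanishes like $(2-\alpha)$ as $\alpha\to 2^-$ because at $\alpha=2$ the discrete Schr\"odinger phase is a sum of one-dimensional pieces and the mixed degeneracy structure changes, so the normalizing constant in front of $|s|^{-3/4}$ blows up like $(2-\alpha)^{-1/4}$, giving the first bound. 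For $C_1(\alpha)$ the nondegenerate stationary-phase constant is $\sim |\det D^2 w|^{-1/2}$, and as $\alpha\to 1^+$ the Hessian determinant of $w$ on (a shrinking portion of) $K_1$ degenerates linearly — $\det D^2 w \sim (\alpha-1)$ — because at $\alpha=1$ (the wave case) the level sets become flat in a one-parameter family of directions, producing $C_1(\alpha)\gtrsim (\alpha-1)^{-1/2}$. For $C_2(\alpha)$ one combines both effects: a $(\alpha-1)^{-1/2}$ from the surviving nondegenerate direction near the wave limit together with the $\alpha$-dependence of the cubic (fold) coefficient, and bookkeeping the exponents $\tfrac12$ and $\tfrac13$ against the powers of $(\alpha-1)$ yields the stated exponent $\tfrac23 - \tfrac{5\alpha}{12}$. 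In each case the lower bound is obtained by testing the dispersive inequality against an explicit near-extremizing $f$ (a bump adapted to the stationary point) and reading off the constant from the leading asymptotic term, with the remainder controlled by the stability estimates cited above; one must take care that the small-angle regime near $\xi=0$ (where $\sin z\sim z$ and $w$ is nearly $|\xi|^\alpha$) is handled by direct computation rather than treated as a linear perturbation.

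\textbf{Main obstacle.} The hardest part is the explicit leading-term asymptotics in $\alpha$ for the cusp case $C_3(\alpha)$: one must produce a superadapted coordinate system for the non-smooth-in-$\alpha$ phase near $(\pm\pi/2,\pm\pi/2)$, verify the hypotheses of \cite{greenblatt2009asymptotic} uniformly, and show that the resulting principal coefficient genuinely vanishes to first order as $\alpha\to 2^-$ rather than to some higher order — this requires a careful Taylor expansion of $w$ to fourth order at the cusp with the $\alpha$-dependence retained, and distinguishing the two cusp-type contributions that coalesce in the Schr\"odinger limit. Establishing that the degeneracy is exactly linear (not merely $O(2-\alpha)$ from above but also $\gtrsim (2-\alpha)$ from below) is the delicate quantitative point on which the sharpness of \eqref{lower bound} rests.
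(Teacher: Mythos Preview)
Your overall strategy is sound and tracks the paper closely: reduce to a kernel bound via Young, stratify by $K_1,K_2,K_3$, use \cite{ikromov2011uniform} for uniformity in the linear parameter $y$, and read the lower bounds off the leading-order asymptotics in superadapted coordinates. The paper applies the Newton polyhedron/height machinery directly through \cite{ikromov2011uniform} rather than your splitting-lemma-plus-one-dimensional-Van-der-Corput reduction, but for $K_2$ these are equivalent.

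There is, however, a concrete error in your picture of the cusp. At $\xi\in K_3$, in the eigencoordinates $(x,y)$ one has $\partial_{xx}w(\xi)=-\tfrac{\alpha}{8}(2-\alpha)$, $\partial_{xyy}w(\xi)=-\tfrac{\alpha}{4\sqrt2}$, and crucially $\partial_y^k w(\xi)=0$ for \emph{every} $k\ge2$. So the singularity is not a pure $A_3$ with a nonvanishing $\partial_y^4 w$; the principal part is $Ax^2+Bxy^2$ with $A\sim(2-\alpha)$ and $B\sim\alpha$. Your splitting argument does recover a quartic in the reduced variable, $\psi(y)=-\tfrac{B^2}{4A}y^4$, but its fourth derivative behaves like $(2-\alpha)^{-1}$ and \emph{blows up} as $\alpha\to 2^-$; it does not vanish. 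The $(2-\alpha)^{-1/4}$ in $C_3$ actually comes from the $x$-direction factor $|A|^{-1/2}\sim(2-\alpha)^{-1/2}$ competing against the $y$-direction factor $|\psi^{(4)}|^{-1/4}\sim(2-\alpha)^{1/4}$. Your stated mechanism (``the fourth-derivative coefficient vanishes like $(2-\alpha)$'') is inverted, and if you implement it as written you will get the wrong sign on the exponent. The paper sidesteps this by computing $\int(Ax^2+Bx)^{-3/4}\,dx$ directly from the principal part in superadapted coordinates.

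The other place where your sketch is too thin is the low-frequency regime $N\le N_\alpha$, where $\mathrm{supp}\,\eta(\cdot/N)$ lies entirely in $K_1$. Here $w_N$ is a \emph{nonlinear} perturbation of $|\xi|^\alpha$, so neither \cite{ikromov2011uniform} nor uniform $C^k$ bounds on $w_N$ alone give uniformity in $N$ for free; the Hessian is degenerating in $(\alpha-1)$ and you must track this. The paper spends most of the proof on this case: it applies the radializing change of variables $z_i=\tfrac{2}{N}\sin(\tfrac{N\xi_i}{2})$ so that the phase becomes exactly $\rho^\alpha$, passes to polar coordinates, runs stationary phase in the angular variable (verifying nondegeneracy uniformly in $N$), and then applies Van der Corput in the radial variable after establishing $|\partial_\rho^2\Psi|\gtrsim(\alpha-1)N^\alpha$. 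That last lower bound is the source of the $(\alpha-1)^{-1/2}$ in $C_1(\alpha)$, and the analysis there --- not the cusp asymptotic --- is where the bulk of the technical work lies.
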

For more details on the domain of $N\in 2^\mathbb{Z}$ that satisfy \eqref{prove}, see \eqref{N}. By interpolating the estimates in \eqref{prove}, one obtains
\begin{corollary}\label{conjecture2}
Assume the hypotheses of \Cref{conjecture}. Then,
\begin{equation*}
\|U_h(t)P_N f \|_{L^\infty_h} \lesssim_\alpha (\frac{N}{h})^{2-\frac{3}{4}\alpha}|t|^{-\frac{3}{4}}\| f \|_{L^1_h}.   
\end{equation*}
\end{corollary}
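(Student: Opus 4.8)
The plan is to reduce the corollary to the three frequency-localized bounds of \Cref{conjecture} together with the elementary $|t|^0$ estimate, and then to choose interpolation weights so that each localized piece produces exactly the decay rate $|t|^{-\frac34}$. First I would fix, once and for all and independently of $h$, a smooth partition of unity $1=\chi_1+\chi_2+\chi_3$ on $M=\mathbb{T}^2\setminus\{0\}$ subordinate to a cover consisting of small balls around the four cusp points $K_3$, a thin tube around $K_2\setminus K_3$, and a complementary open set contained in $K_1$ on which $\det D^2w$ is bounded below. Writing $P_N^{(j)}$ for the Fourier multiplier with symbol $\eta(\tfrac{h\xi}{N})\chi_j(h\xi)$, one has $P_N=\sum_{j=1}^3 P_N^{(j)}$ with at most three nonzero summands; since $K_2$ and $K_3$ lie at distance $\gtrsim1$ from the origin, for all sufficiently small $N$ only the $j=1$ piece survives. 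By construction the symbol of $P_N^{(j)}$ is supported in the region to which the $j$-th line of \eqref{prove} applies, so \Cref{conjecture} bounds $\|U_h(t)P_N^{(j)}f\|_{L^\infty_h}$ by $C_3(\alpha)(\tfrac Nh)^{2-\frac34\alpha}|t|^{-\frac34}$, $C_2(\alpha)(\tfrac Nh)^{2-\frac56\alpha}|t|^{-\frac56}$, and $C_1(\alpha)(\tfrac Nh)^{2-\alpha}|t|^{-1}$ times $\|f\|_{L^1_h}$ for $j=3,2,1$ respectively.

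Next I would record the trivial bound: since $|\widehat f(\xi)|\le\|f\|_{L^1_h}$ for every $\xi\in\tfrac{\mathbb{T}^2}{h}$ and the symbol of $P_N^{(j)}$ is supported on a subset of $\tfrac{\mathbb{T}^2}{h}$ of Lebesgue measure $\lesssim(\tfrac Nh)^2$, Fourier inversion gives
\begin{equation*}
\|U_h(t)P_N^{(j)}f\|_{L^\infty_h}\lesssim\Big(\tfrac Nh\Big)^2\|f\|_{L^1_h}
\end{equation*}
uniformly in $t>0$, $0<h\le1$, $N\le1$ and $j\in\{1,2,3\}$. The remaining step is the exponent bookkeeping, done separately for each $j$. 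For $j=3$ the bound from \Cref{conjecture} is literally the asserted one, so there is nothing to do. For $j=2$ I would take the geometric mean, with weight $\theta=\tfrac9{10}$, of the fold estimate $(\tfrac Nh)^{2-\frac56\alpha}|t|^{-\frac56}$ and the trivial bound $(\tfrac Nh)^2$; since $\tfrac9{10}\cdot\tfrac56=\tfrac34$ and $2-\tfrac9{10}\cdot\tfrac{5\alpha}{6}=2-\tfrac34\alpha$, this equals $(\tfrac Nh)^{2-\frac34\alpha}|t|^{-\frac34}$. For $j=1$ I would interpolate the nondegenerate estimate $(\tfrac Nh)^{2-\alpha}|t|^{-1}$ against the trivial bound with weight $\theta=\tfrac34$, and $\tfrac34(2-\alpha)+\tfrac14\cdot2=2-\tfrac34\alpha$ again yields $(\tfrac Nh)^{2-\frac34\alpha}|t|^{-\frac34}$. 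Summing the at most three pieces and absorbing $\max_iC_i(\alpha)$ together with the fixed partition-of-unity constant into the implicit constant gives the stated estimate.

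I do not expect a serious obstacle — the corollary is essentially bookkeeping — but the point that must be handled correctly is that \emph{no single line of} \eqref{prove} implies the corollary on its own: in the range $(\tfrac Nh)^{\alpha}|t|<1$ the nondegenerate bound $(\tfrac Nh)^{2-\alpha}|t|^{-1}$ is in fact \emph{larger} than the claimed bound, by the factor $\big((\tfrac Nh)^{\alpha}|t|\big)^{-1/4}$, and likewise the fold bound overshoots there. Thus the ``interpolation'' producing the uniform $|t|^{-3/4}$ rate is genuinely an interpolation of each dispersive estimate with the trivial energy bound, not an interpolation among the three dispersive estimates; once this is recognized, the weights $\theta=1,\tfrac9{10},\tfrac34$ above close the argument, using only $\tfrac34\alpha<\tfrac56\alpha<\alpha$.
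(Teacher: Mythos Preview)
Your core argument---interpolating each frequency-localized dispersive bound against the trivial volume bound $(N/h)^2$ with weights $\theta=1,\tfrac{9}{10},\tfrac{3}{4}$---is exactly the content of the corollary and matches what the paper does. Your closing observation that the nondegenerate and fold bounds individually \emph{overshoot} for small $(N/h)^\alpha|t|$, so that the interpolation must be with the trivial bound rather than among the three dispersive bounds, is the right diagnosis.

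The partition of unity $\chi_1+\chi_2+\chi_3$ in frequency space, however, is unnecessary and introduces a small gap. As recorded in \eqref{N}, the three cases of \Cref{conjecture} are indexed not by spatial region but by \emph{disjoint} sets $S_3,S_2,S_1$ of dyadic $N$: for each fixed $N\le 1$ exactly one line of \eqref{prove} applies to the full projector $P_N$, so you simply interpolate \emph{that} line with the trivial bound, choosing $\theta$ accordingly. Your spatial decomposition instead requires applying \Cref{conjecture} to the modified multipliers $P_N^{(j)}$ with symbol $\eta(h\xi/N)\chi_j(h\xi)$, which the proposition does not literally assert (it is stated for $\eta(\cdot/N)$ only); note in particular that for $N\in S_3$ the second line of \eqref{prove} is \emph{false} for the original $P_N$ since the cusp contribution forces decay no faster than $|t|^{-3/4}$. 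The underlying oscillatory-integral estimates \eqref{ik} do extend to such cutoffs, so your route can be made to work, but it reopens the proof of the proposition rather than using it as a black box.
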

\begin{remark}
Assuming \Cref{conjecture}, it is straightforward to obtain the Strichartz estimates for the linear evolution by averaging in $t,N$, which we briefly describe. Suppose $\|U_h(t)P_N  \|_{L^1_h \rightarrow L^\infty_h} \lesssim_\alpha (\frac{N}{h})^{\beta}|t|^{-\sigma}$ for some $\beta,\sigma>0$. Define the Strichartz pair $(q,r) \in [2,\infty]^2$ by the relation
\begin{equation}\label{strichartz pair}
\frac{1}{q}+\frac{\sigma}{r} = \frac{\sigma}{2},\: (q,r,\sigma) \neq (2,\infty,1).    
\end{equation}
As in \cite[p.1127]{cho2011remarks}, define $\Tilde{U}(t) = P_N U_h\Big( (\frac{N}{h})^{\frac{\beta}{\sigma}}t\Big)P_{\sim N}$ where $P_{\sim N} := P_{N/2}+P_N+P_{2N}$. Then $\{\Tilde{U}(t)\}_{t\in\mathbb{R}}$ satisfies the hypotheses of \cite[Theorem 1.2]{keel1998endpoint} from which follows
\begin{equation*}
    \| U_h(t) P_N f \|_{L^q_t L^r_h} \lesssim_{q,r} (\frac{N}{h})^{\beta (\frac{1}{2}-\frac{1}{r})}\| P_N f \|_{L^2_h} \simeq \| P_N |\nabla_h|^{\beta (\frac{1}{2}-\frac{1}{r})} f \|_{L^2_h}.
\end{equation*}
Squaring both sides and summing in $N$,
\begin{equation}\label{strichartz}
\| U_h(t) f \|_{L^q_t L^r_h} \lesssim \Big(\sum_{N \leq 1} \| U_h(t) P_N f \|_{L^q_t L^r_h}^2 \Big)^{\frac{1}{2}} \lesssim \Big(\sum_{N \leq 1} \| P_N |\nabla_h|^{\beta (\frac{1}{2}-\frac{1}{r})} f \|_{L^2_h}^2 \Big)^{\frac{1}{2}} \simeq \|  |\nabla_h|^{\beta (\frac{1}{2}-\frac{1}{r})} f \|_{L^2_h},     
\end{equation}
for $r \in [2,\infty)$ where the first inequality follows from the Littlewood-Paley inequality on the lattice \cite[Theorem 4.2]{hong2018uniform}. As an example, \Cref{conjecture2} asserts $(\beta,\sigma) = (2-\frac{3}{4}\alpha,\frac{3}{4})$ and hence by \eqref{strichartz},
\begin{equation*}
   \| U_h(t) f \|_{L^q_t L^r_h} \lesssim_{q,r,\alpha} \|  |\nabla_h|^{(2-\frac{3}{4}\alpha) (\frac{1}{2}-\frac{1}{r})} f \|_{L^2_h}.  
\end{equation*}
The derivative loss occurs for $\alpha<\frac{8}{3}$ or $\alpha<2$ on $h\mathbb{Z}^2$ (for all $h>0$) or $\mathbb{R}^2$, respectively.
\end{remark}

For $v \in \mathbb{R}^d$, define $\Phi_v(\xi) = v\cdot \xi - w(\xi)$ for $\xi \in \mathbb{R}^d$ and let $\zeta \in C^\infty_c(\mathbb{R}^d)$ be a test function. Consider
\begin{equation*}
    J=J_{\Phi_v,\zeta}(\tau) := \int_{\mathbb{R}^d} e^{i\tau \Phi_v(\xi)}\zeta(\xi)d\xi,
\end{equation*}
where $\tau>0$ without loss of generality, for $\tau<0$ amounts to taking the complex conjugate of $J_{\Phi_v,\zeta}$. To show \eqref{prove}, observe that
\begin{equation*}
\|U_h(t)P_N f \|_{L^\infty_h} =\|K_{t,N,h} \ast f \|_{L^\infty_h} \leq \|K_{t,N,h} \|_{L^\infty_h} \| f \|_{L^1_h}    
\end{equation*}
by the Young's inequality applied to the convolution in $h\mathbb{Z}^d$ where
\begin{equation*}
K_{t,N,h}(x) = (2\pi)^{-d} \int_{\frac{\mathbb{T}^d}{h}} e^{i\Big\{x \cdot \xi - t\Big(\frac{4}{h^2}\sum\limits_{i=1}^d \sin^2(\frac{h\xi_i}{2})\Big)^{\frac{\alpha}{2}}\Big\}}\eta\Big(\frac{h\xi}{N}\Big)d\xi.   
\end{equation*}
Change variables $\xi \mapsto \frac{\xi}{h}$ and define $\tau = \frac{2^\alpha t}{h^\alpha},\: v=\frac{x}{h\tau}$ to obtain
\begin{equation*}
    K_{t,N,h}(x)= (2\pi h)^{-d} \int_{\mathbb{T}^d} e^{i\tau(v\cdot \xi - w(\xi))} \eta\Big(\frac{\xi}{N}\Big)d\xi.
\end{equation*}
A priori since $x \in h\mathbb{Z}^d$, it follows that $v \in \tau^{-1}\mathbb{Z}^d$, which we consider as a subset of $\mathbb{R}^d$. If $\sigma_0>0$ is the sharp decay rate for $J_{\Phi_v,\zeta}$ in the sense that
\begin{equation}\label{decay}
    \sup_{v\in \mathbb{R}^d}|J_{\Phi_v,\zeta}(\tau)|  \leq C(\zeta) |\tau|^{-\sigma_0}
\end{equation}
holds for all $\tau \in \mathbb{R}\setminus \{0\}$ for some $C(\zeta)>0$ and no bigger $\sigma^\prime>0$ satisfies \eqref{decay}, then
\begin{equation*}
    \| K_{t,N,h} \|_{L^\infty_h} \leq (2\pi h)^{-d} C\Big(\eta(\frac{\cdot}{N})\Big)|\tau|^{-\sigma_0} = (2\pi)^{-d} 2^{-\sigma_0 \alpha}\frac{C\Big(\eta(\frac{\cdot}{N})\Big)}{h^{d-\sigma_0 \alpha}}|t|^{-\sigma_0}. 
\end{equation*}
Hence our goal reduces to obtaining \eqref{decay} for a dyadic family of Littlewood-Paley functions. Outside of a neighborhood of the origin, $\Phi_v$ is analytic, and therefore the major contributions to $J$ are due to critical points $\xi \in \mathbb{R}^d$ that satisfy $\nabla \Phi_v(\xi)=0$, or equivalently, $v = \nabla w (\xi)=: v_\xi$. In any arbitrary dimension,
\begin{equation}\label{group velocity}
\begin{split}
\nabla w(\xi) &= \frac{\alpha}{4} w(\xi)^{-(\frac{2}{\alpha}-1)}(\sin \xi_1,\dots,\sin \xi_d),\\
|\nabla w(\xi)|^2 &= \frac{\alpha^2}{16} \frac{\sum\limits_{i=1}^d \sin^2 \xi_i}{(\sum\limits_{i=1}^d \sin^2 \frac{\xi_i}{2})^{2-\alpha}}.
\end{split}
\end{equation}
For any $\alpha >1$, $|\nabla w(\xi)|\xrightarrow[\xi\rightarrow 0]{}0$ and $\sup\limits_{\xi \in \mathbb{T}^d}|\nabla w(\xi)| = C(\alpha,d) \in (0,\infty)$. With a slight abuse of notation, define $\nabla w:\mathbb{T}^d\rightarrow \mathbb{R}^d$ where $\nabla w (0)=0
$. Then $\nabla w$ is continuous and its compact image defines a light cone. If $|v|\gg_{\alpha,d,h} 1$ (spacelike event), then $J$ decays faster than $\tau^{-n}$ for any $n \in \mathbb{N}$, by integration by parts, with the implicit constant dependent on $n$ and the distance between $v$ and the light cone (see \Cref{non-stationary}). Inside the light cone (including the boundary), $J$ undergoes an algebraic decay due to critical points. For such $v$, it is generically true that the corresponding critical point(s) $\xi \in \mathbb{T}^d$ are non-degenerate, and therefore $J$ decays as $\tau^{-\frac{d}{2}}$. However there exists a low-dimensional subset of $\mathbb{T}^d$ that retards even further the decay rate of $\frac{d}{2}$. We consider this problem of resolution of singularities for $d=2$.

To systematically study the decay and asymptotics of $J$ as a function of $v$ and $\zeta$, consider the Taylor series expansion of $\Phi_v$. Let $\xi \in \mathbb{T}^2\setminus\{0\}$ and $v_\xi = \nabla w(\xi)$. Consider $\Phi_{v_\xi}$ so that $\nabla \Phi_{v_\xi} (\xi)=0$. Pick $\zeta \in C^\infty_c$ around $\xi$ such that $\xi$ is the unique critical point in the support. Then $J_{\Phi_{v_\xi},\zeta}$ has an asymptotic expansion
\begin{equation}\label{asymptotics}
    J_{\Phi_{v_\xi},\zeta} =  d_0(\zeta)\tau^{-\sigma_0}+o(\tau^{-\sigma_0}),
\end{equation}
as $\tau \rightarrow \infty$ where $\sigma_0$, or the \textit{oscillatory index}, is chosen to be the minimal number such that for any neighborhood of $\xi$, say $U$, there exists $\zeta_U \in C^\infty_c(U)$ such that $d_0(\zeta_U) \neq 0$; in particular, $\sigma_0$ depends only on the phase, not the smooth bump function. Under some hypotheses, $\sigma_0,d_0$ are deduced from the higher order Taylor expansion of $\Phi_{v_\xi}$ (see \Cref{greenblatt}), a process that we briefly describe.

Let $\Phi$ be a real-valued analytic function on a small neighborhood of the origin. Assume $\Phi(0)=0,\:\nabla \Phi(0)=0$ and therefore the Taylor expansion of $\Phi$ at the origin in the multi-index notation is
\begin{equation*}
    \Phi(x) = \sum_{|\alpha|\geq 2} c_\alpha x^\alpha = \sum_{|\alpha|\geq 2} \frac{\partial_\alpha \Phi(0)}{\alpha !} x^\alpha.
\end{equation*}
Define the Taylor support $\mathcal{T} = \{\alpha \in \mathbb{N}^d: c_\alpha \neq 0\}$ and assume that $\Phi$ is of finite type, i.e., $\mathcal{T} \neq \emptyset$. Define the \textit{Newton polyhedron} of $\Phi$, call it $\mathcal{N}$, to be the convex hull of 
\begin{equation*}
    \bigcup_{\alpha \in \mathcal{T}} \alpha + \mathbb{R}_+^d = \bigcup_{\alpha \in \mathcal{T}} \alpha + \{x \in \mathbb{R}^d: x_i \geq 0\}, 
\end{equation*}
and the \textit{Newton diagram} $\mathcal{N}_d$ to be the union of all compact faces of $\mathcal{N}$. Let $\mathcal{N}_{pr}$, the principal part of Newton diagram, be the subset of $\mathcal{N}_d$ that intersects the bisectrix $\{x_1 = x_2=\cdots = x_d\}$. Define the principal part of $\Phi$ (or the normal form) as
\begin{equation*}
\Phi_{pr}(x) = \sum_{|\alpha|\geq 2,\: \alpha \in \mathcal{N}_{pr}}c_\alpha x^\alpha.    
\end{equation*}
Let $d=d(\Phi) = \inf \{t: (t,t,\dots,t)\in \mathcal{N}\}$ be the distance from the origin to $\mathcal{N}$. Since $\Phi$ is of finite type, $0<d<\infty$. Note that $\mathcal{T}$ is not invariant under analytic coordinate transformations. Let $d_x$ be the distance computed in the $x$ coordinate system and define the \textit{height} of $\Phi$ as $h(\Phi)=\sup\limits_{x} d_x$ where the supremum is over all analytic coordinate systems. The coordinate system $(x)$ is \textit{adapted} if $d_x = h$. In $\mathbb{R}^2$, see \cite[Proposition 0.7,0.8]{varchenko1976newton} for sufficient conditions for $(x)$ to be adapted. An adapted system need not be unique. To obtain the asymptotics of oscillatory integrals, we work in a \textit{superadapted} coordinate system defined specifically in dimension two in \cite{greenblatt2009asymptotic} as a coordinates system in which $\Phi_{pr}(x,\pm 1)$ have no real roots of order greater than or equal to $d_x(\Phi)$, possibly except $x=0$. In particular, if $d(\Phi)>1$ and $\Phi_{pr}(x,\pm 1)$ is a quadratic polynomial with no repeated roots, then $(x,y)$ is superadapted.

See the introductions of \cite{borovyk2017klein,greenblatt2009asymptotic,ikromov2011uniform,varchenko1976newton}, from which this paper adopts all relevant terminologies, for a brief survey of the relationship between oscillatory integrals and Newton polyhedra. To illustrate these ideas, consider an example. Let $\Phi(x,y) = x^2+y^2+x^3$. Then 
\begin{equation*}
\begin{split}
\mathcal{T} &= \{(2,0),(0,2),(3,0)\}\\
\mathcal{N}&=\{(x,y)\in \mathbb{R}^2: x+y \geq 2,\: x,y \geq 0\},\: \mathcal{N}_d = \mathcal{N}_{pr} = \{(x,y)\in \mathbb{R}^2: x+y =2,\: 0 \leq x \leq 2\}\\
\Phi_{pr}(x,y) &= x^2+y^2,\: d_{(x,y)} =1.
\end{split}    
\end{equation*}
Since $\Phi_{pr}(x,\pm 1) = x^2+1$ has no real root, the given coordinates system is superadapted.
\section{Continuum limit.}\label{continuum limit}
The proof of \Cref{mainthm} is given.
\begin{lemma}\label{series}
Let $\beta \in [0,1],\: p>1,\: d \in \mathbb{N}$. The implicit constants in the following estimates are independent of $h>0$ and dependent only on $\beta,d$.
\begin{enumerate}
    \item $\| d_h f \|_{H^\beta_h} \lesssim \| f \|_{H^\beta(\mathbb{R}^d)}$.
    \item $\| p_h f_h \|_{H^\beta(\mathbb{R}^d)} \lesssim \| f_h \|_{H^\beta_h}$.
    \item $\| p_h d_h f - f \|_{L^2(\mathbb{R}^d)} \lesssim h^\beta \| f \|_{H^\beta(\mathbb{R}^d)}$.
    \item $\| p_h U_h(t) f_h - U(t)u_0 \|_{L^2(\mathbb{R}^d)} \lesssim h^{\frac{\beta}{1+\beta}}|t|(\| f_h \|_{H^\beta_h} + \| u_0 \|_{H^\beta(\mathbb{R}^d)})+ \|p_h f_h-u_0 \|_{L^2(\mathbb{R}^d)}$.\\
    If $f_h$ is the discretization of $u_0$, i.e., $f_h= u_{0,h}$, then $\| p_h U_h(t) u_{0,h} - U(t)u_0\|_{L^2(\mathbb{R}^d)} \lesssim \langle t \rangle h^{\frac{\beta}{1+\beta}}\| u_0 \|_{H^\beta(\mathbb{R}^d)}$.
    \item $\| p_h (|u_h|^{p-1}u_h) - |p_h u_h|^{p-1}p_h u_h \|_{L^2(\mathbb{R}^d)} \lesssim h^\beta \| u_h \|_{L^\infty_h}^{p-1} \| u_h \|_{H^\beta_h}$.
\end{enumerate}
\end{lemma}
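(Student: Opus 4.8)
The five estimates split into a routine group (items 1--3 and 5) and the one substantive item (4); I would handle them in that order.

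\emph{Items 1--3.} The plan is to prove the endpoints $\beta=0,1$ by hand and interpolate, using $[L^2(\mathbb{R}^d),H^1(\mathbb{R}^d)]_\beta=H^\beta(\mathbb{R}^d)$ and $[L^2_h,H^1_h]_\beta=H^\beta_h$. Item 1: at $\beta=0$, Jensen gives $|d_hf(x)|^2\le h^{-d}\int_{x+[0,h)^d}|f|^2$, so $\|d_hf\|_{L^2_h}\le\|f\|_{L^2}$; at $\beta=1$, the commutation $D_h^{+,i}(d_hf)=d_h\!\left(\tfrac1h(f(\cdot+he_i)-f)\right)$ reduces it to the $\beta=0$ bound together with $\|\tfrac1h(f(\cdot+he_i)-f)\|_{L^2}\le\|\partial_if\|_{L^2}$. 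Item 2: $p_hf_h$ is affine on each cell $x'+[0,h)^d$, so $\int|p_hf_h|^2$ over that cell is a positive-semidefinite quadratic form in the $d+1$ nodal values with $O(h^d)$ coefficients; summing over cells (each node being reused $d+1$ times) gives $\beta=0$, and differentiating the affine pieces gives $\beta=1$. Item 3: $\beta=0$ is the triangle inequality with items 1, 2; $\beta=1$ is the first-order finite-element consistency bound $\|p_hd_hf-f\|_{L^2(x'+[0,h)^d)}\lesssim h\|\nabla f\|_{L^2(\widetilde Q)}$, with $\widetilde Q$ the cell and its $d$ forward neighbours (two Poincar\'e inequalities), summed over cells. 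I would also record the elementary identity $d_h\circ p_h=\mathrm{Id}+\tfrac h2\sum_{i=1}^d D_h^{+,i}$ on $L^2_h$, which is needed in item 4.

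\emph{Item 5.} Argue cellwise on $Q=x'+[0,h)^d$. Set $a=u_h(x')$, $\delta_i=(D_h^+u_h)_i(x')$, $F(z)=|z|^{p-1}z$; expanding $F$ by the fundamental theorem of calculus along the segments from $a$ to the points $a+h\delta_i$ and to $p_hu_h(x)=a+\sum_i\delta_i(x_i-x_i')$ --- all of which have modulus $\lesssim\|u_h\|_{L^\infty_h}$ since $p_hu_h$ does on $\overline Q$, so that the relevant $\int_0^1DF$ all have norm $\lesssim\|u_h\|_{L^\infty_h}^{p-1}$ --- one gets, since the two averaged derivatives differ only in their base segments (of length $\lesssim h|\delta_i|$),
\[
\big|\,|p_hu_h(x)|^{p-1}p_hu_h(x)-p_h(|u_h|^{p-1}u_h)(x)\,\big|\ \lesssim\ \|u_h\|_{L^\infty_h}^{p-1}\sum_{i=1}^d\big|u_h(x'+he_i)-u_h(x')\big|,\quad x\in Q.
\]
Squaring, integrating over $Q$, summing in $x'$, and using $\|u_h(\cdot+he_i)-u_h\|_{L^2_h}=\|(e^{ih\xi_i}-1)\widehat{u_h}\|_{L^2}\lesssim\|\min(h|\xi|,1)\widehat{u_h}\|_{L^2}\le h^\beta\|u_h\|_{\dot W^{\beta,2}_h}\lesssim h^\beta\|u_h\|_{H^\beta_h}$ (valid for $\beta\in[0,1]$) yields item 5.

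\emph{Item 4.} Decompose
\[
p_hU_h(t)f_h-U(t)u_0=\underbrace{p_hU_h(t)(f_h-d_hu_0)}_{\mathrm I}+\underbrace{p_h\big(U_h(t)d_hu_0-d_hU(t)u_0\big)}_{\mathrm{II}}+\underbrace{(p_hd_h-\mathrm{Id})U(t)u_0}_{\mathrm{III}}.
\]
Then $\|\mathrm{III}\|_{L^2}\lesssim h^\beta\|U(t)u_0\|_{H^\beta}=h^\beta\|u_0\|_{H^\beta}\le h^{\beta/(1+\beta)}\|u_0\|_{H^\beta}$ by item 3; and by item 2 ($\beta=0$), unitarity of $U_h$, the identity $f_h=d_hp_hf_h-\tfrac h2\sum_i(D_h^+f_h)_i$, and $\sup_{\xi\in\mathbb{T}^d/h}|\xi|\langle\xi\rangle^{-\beta}\lesssim h^{\beta-1}$,
\[
\|\mathrm I\|_{L^2}\lesssim\|f_h-d_hu_0\|_{L^2_h}\le\|d_h(p_hf_h-u_0)\|_{L^2_h}+h\sum_i\|(D_h^+f_h)_i\|_{L^2_h}\lesssim\|p_hf_h-u_0\|_{L^2}+h^\beta\|f_h\|_{H^\beta_h},
\]
the two $h^\beta(\cdots)$ terms being absorbed into $h^{\beta/(1+\beta)}(\cdots)$ and $\|p_hf_h-u_0\|_{L^2}$ being the last term of the claim. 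For $\mathrm{II}$ it suffices (item 2, $\beta=0$) to bound $\|U_h(t)d_hu_0-d_hU(t)u_0\|_{L^2_h}$. Split $u_0=u_0^{\flat}+u_0^{\sharp}$ by a continuum Littlewood--Paley projection at frequency $\Lambda\le 1/h$. High part: using $\|d_h\|_{L^2\to L^2_h}\lesssim1$ and $\|U\|,\|U_h\|\le1$, this is $\lesssim\|u_0^{\sharp}\|_{L^2}\le\Lambda^{-\beta}\|u_0\|_{H^\beta}$. Low part: since $\Lambda<\pi/h$, no frequency-aliasing occurs in $d_hu_0^{\flat}$ or $d_hU(t)u_0^{\flat}$, so the difference is, on the Fourier side, the multiplier $\big(e^{-it\sigma_h(\xi)}-e^{-it|\xi|^\alpha}\big)m_{d_h}(\xi)\widehat{u_0^{\flat}}(\xi)$ supported in $\{|\xi|\le\Lambda\}$, where $\sigma_h$ is the symbol of $(-\Delta_h)^{\alpha/2}$ and $|m_{d_h}|\le1$; using $|e^{ia}-e^{ib}|\le|a-b|$, the expansion $|\xi|^\alpha-\sigma_h(\xi)=\tfrac{\alpha}{24}h^2\big(\sum_i\xi_i^4\big)|\xi|^{\alpha-2}+O(h^4|\xi|^{\alpha+4})$ for $h|\xi|\lesssim1$ (hence $\big|\,|\xi|^\alpha-\sigma_h(\xi)\big|\lesssim h^2|\xi|^{\alpha+2}$), and $|\xi|^{\alpha+2}\le|\xi|^\beta\Lambda^{\alpha+2-\beta}$ on the support, Plancherel gives this part $\lesssim|t|\,h^2\Lambda^{\alpha+2-\beta}\|u_0\|_{\dot H^\beta}$. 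Choosing $\Lambda=h^{-1/(1+\beta)}\ (\le 1/h)$ makes $\Lambda^{-\beta}=h^{\beta/(1+\beta)}$ and $h^2\Lambda^{\alpha+2-\beta}=h^{(3\beta-\alpha)/(1+\beta)}\le h^{\beta/(1+\beta)}$ once $\beta\ge\alpha/2$ --- which is the regime of \Cref{mainthm} (there $\beta=\alpha/2$, and the exponent becomes $\alpha/(2+\alpha)$, matching its rate) --- so $\|\mathrm{II}\|_{L^2}\lesssim\langle t\rangle h^{\beta/(1+\beta)}\|u_0\|_{H^\beta}$. Combining $\mathrm I,\mathrm{II},\mathrm{III}$ proves item 4, and the case $f_h=d_hu_0=u_{0,h}$ is immediate since then $\mathrm I=0$.

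\emph{Main obstacle.} Everything outside term $\mathrm{II}$ is elementary; the difficulty is concentrated there (and implicitly in $\mathrm I$). The cell-averaging $d_h$ and the interpolation $p_h$ create aliased Fourier copies at $\xi+2\pi k/h$, on which the phase difference $\sigma_h(\xi)-|\xi+2\pi k/h|^\alpha\sim h^{-\alpha}$ is enormous, so one cannot compare $U_h$ with $U$ on all of $\mathbb{R}^d$ at once: one must truncate at $\Lambda\lesssim 1/h$, discard the high frequencies with the crude bound $|e^{ia}-e^{ib}|\le2$, and use the quantitative comparison $|\xi|^\alpha-\sigma_h(\xi)=O(h^2|\xi|^{\alpha+2})$ below $\Lambda$. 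The exponent $\beta/(1+\beta)$ (equivalently $\alpha/(2+\alpha)$ at $\beta=\alpha/2$) is precisely what optimizing $\Lambda$ against these two competing errors yields, and this is the one point where the interplay of the nonlocality $\alpha$ and the regularity $\beta$ is decisive; the remaining bookkeeping (the endpoint $H^1$-type bounds of items 2--3, the identity $d_hp_h=\mathrm{Id}+\tfrac h2\sum_iD_h^{+,i}$, the cellwise estimate of item 5) is routine though tied to the explicit form of $p_h$.
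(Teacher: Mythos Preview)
The paper does not actually prove this lemma; its proof consists of the single line ``See \cite{kirpatrick,hong2019strong}.'' Your proposal therefore supplies the details that the paper defers, and the overall strategy---endpoint-and-interpolate for items 1--3, a cellwise Taylor-remainder argument for item 5, and a low/high frequency splitting to compare the two propagators in item 4---is the standard one underlying those references. The computations (the identity $d_h p_h = \mathrm{Id} + \tfrac{h}{2}\sum_i D_h^{+,i}$, the symbol bound $|\,|\xi|^\alpha - \sigma_h(\xi)\,| \lesssim h^2|\xi|^{\alpha+2}$, the cellwise bound in item 5) are all correct.

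One point deserves a flag, and you already isolate it yourself: in term $\mathrm{II}$ of item 4, the choice $\Lambda = h^{-1/(1+\beta)}$ forces the low-frequency error to be $h^{(3\beta-\alpha)/(1+\beta)}$, which meets the claimed rate $h^{\beta/(1+\beta)}$ only when $\beta \ge \alpha/2$. If instead one optimizes $\Lambda$ freely, the best rate this splitting yields is $h^{2\beta/(\alpha+2)}$, which coincides with $h^{\beta/(1+\beta)}$ precisely at $\beta = \alpha/2$, is stronger for $\beta > \alpha/2$, and is weaker for $\beta < \alpha/2$. Since the paper invokes item 4 only at $\beta = \alpha/2$ (the estimate on $\|I\|_{L^2}$ in the proof of \Cref{mainthm}, giving the exponent $\alpha/(2+\alpha)$), your argument covers exactly what is used. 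The lemma's blanket claim of rate $h^{\beta/(1+\beta)}$ for all $\beta \in [0,1]$ with constants ``dependent only on $\beta,d$'' is thus slightly overstated as written; the cited references presumably carry either an $\alpha$-dependent exponent or an implicit restriction $\beta \ge \alpha/2$, and your proposal is faithful to what can actually be proved by this method.
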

\begin{proof}
See \cite{kirpatrick,hong2019strong}.
\end{proof}

\begin{lemma}\label{discrete linfty}
Let $p \geq 3$ and $\max(\frac{8}{7}, \frac{2(p-1)}{p+1})<\alpha<2$. There exists $\delta = \delta(\alpha,p)>0$ sufficiently small such that a Strichartz pair $(q,r)$, defined by \eqref{strichartz pair} with $\sigma = \frac{3}{4}$ and $q:= \frac{2\alpha}{2-\alpha+\delta}$,  satisfies $p-1<q$ and yields the uniform $L^\infty_h$ estimate given by
\begin{equation*}
    \| U_h(t)f_h \|_{L^q_t L^\infty_h} \lesssim \| f_h \|_{H^{\frac{\alpha}{2}}_h}.
\end{equation*}
\end{lemma}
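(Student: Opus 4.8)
The plan is to upgrade the $h$-uniform Strichartz estimate \eqref{strichartz}, derived in the remark following \Cref{conjecture2} with $(\beta,\sigma)=(2-\tfrac{3}{4}\alpha,\tfrac{3}{4})$, to one carrying $L^\infty_h$ on the spatial side at the cost of a small Sobolev loss in frequency, and then to tune the free parameter $\delta$ so that the total number of derivatives landing on the data stays $\leq\tfrac{\alpha}{2}$ while the constraint $q>p-1$ is preserved.

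First I would fix $\delta>0$ small and record the numerology. Taking $q=\tfrac{2\alpha}{2-\alpha+\delta}$ and solving \eqref{strichartz pair} with $\sigma=\tfrac{3}{4}$ yields $\tfrac{1}{r}=\tfrac{7\alpha-8-4\delta}{6\alpha}$. The hypothesis $\alpha>\tfrac{8}{7}$ makes $\tfrac{1}{r}>0$ for $\delta$ small, and $\alpha<2$ gives $\tfrac{1}{r}<\tfrac{1}{2}$, so $(q,r)\in[2,\infty)\times(2,\infty)$ is an admissible pair and $(q,r,\sigma)\neq(2,\infty,1)$ since $\sigma=\tfrac34$. At $\delta=0$ one has $q=\tfrac{2\alpha}{2-\alpha}>p-1$ precisely because of the \emph{strict} inequality $\alpha>\tfrac{2(p-1)}{p+1}$; by continuity $q>p-1$ persists for all sufficiently small $\delta>0$, which (together with $0<\delta<\tfrac{7\alpha-8}{4}$) fixes the admissible range of $\delta$.

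Next I would invoke the $h$-uniform Sobolev embedding on the lattice, $W^{s,r}_h\hookrightarrow L^\infty_h$ for $sr>d=2$, with constant independent of $h$ (this follows from the Littlewood--Paley inequality on $h\mathbb{Z}^2$ of \cite[Theorem 4.2]{hong2018uniform} and Bernstein's inequality). Choosing $s=\tfrac{2}{r}+\varepsilon$ with a small $\varepsilon>0$, applying this embedding at each time $t$, commuting the Fourier multiplier $\langle\nabla_h\rangle^s$ past $U_h(t)$, then using \eqref{strichartz} for the datum $\langle\nabla_h\rangle^s f_h$, and finally Plancherel on $\tfrac{\mathbb{T}^2}{h}$ together with the pointwise bound $|\xi|^{a}\langle\xi\rangle^{s}\leq\langle\xi\rangle^{a+s}$ where $a:=\beta(\tfrac12-\tfrac1r)\geq0$, one obtains
\begin{equation*}
\|U_h(t)f_h\|_{L^q_tL^\infty_h}\lesssim\|U_h(t)\langle\nabla_h\rangle^s f_h\|_{L^q_tL^r_h}\lesssim\||\nabla_h|^{a}\langle\nabla_h\rangle^s f_h\|_{L^2_h}\lesssim\|f_h\|_{H^{a+s}_h}.
\end{equation*}

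Finally comes the bookkeeping: with $\beta=2-\tfrac34\alpha$ and the value of $\tfrac1r$ above, a short computation gives $a=\beta(\tfrac12-\tfrac1r)=\tfrac{(8-3\alpha)(2+\delta-\alpha)}{6\alpha}$ and $\tfrac2r=\tfrac{7\alpha-8-4\delta}{3\alpha}$, whose sum simplifies to $a+\tfrac2r=\tfrac{\alpha-\delta}{2}$; hence $a+s=\tfrac{\alpha}{2}-\tfrac{\delta}{2}+\varepsilon<\tfrac{\alpha}{2}$ once $\varepsilon\in(0,\tfrac{\delta}{2})$, and the trivial $h$-uniform inclusion $H^{\alpha/2}_h\hookrightarrow H^{a+s}_h$ closes the estimate. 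The genuinely delicate point is that at $\delta=\varepsilon=0$ the argument sits exactly on the Sobolev line, so one must verify that a positive-measure set of $(\delta,\varepsilon)$ simultaneously keeps $q>p-1$, keeps $\tfrac1r\in(0,\tfrac12)$, and keeps $sr>2$ strictly — and, correspondingly, that the lattice Sobolev embedding is available with an $h$-independent constant right up to (but not on) the line $sr=2$; everything else is algebra and Plancherel.
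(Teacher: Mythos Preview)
Your proof is correct and follows essentially the same route as the paper: Sobolev embedding $W^{s,r}_h\hookrightarrow L^\infty_h$ with $sr>2$, then the $h$-uniform Strichartz estimate \eqref{strichartz} with $(\beta,\sigma)=(2-\tfrac34\alpha,\tfrac34)$, and the same numerology $a+\tfrac{2}{r}=\tfrac{\alpha-\delta}{2}$. The only cosmetic difference is that the paper takes $s:=\tfrac{\alpha}{2}-a$ directly (so that $a+s=\tfrac{\alpha}{2}$ on the nose and $s>\tfrac{2}{r}$ follows from $\delta>0$), whereas you introduce a second small parameter $\varepsilon$ and set $s=\tfrac{2}{r}+\varepsilon$; this is an equivalent bookkeeping choice.
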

\begin{proof}
Let $s = \frac{\alpha}{2}-(2-\frac{3}{4}\alpha)(\frac{1}{2}-\frac{1}{r})$. Then it can be verified by direct computation that $s>\frac{2}{r}$ using $\delta>0$. Hence by the Sobolev embedding and \eqref{strichartz}, respectively,
\begin{equation*}
    \| U_h(t) f_h \|_{L^q_t L^\infty_h} \lesssim \| U_h(t) f_h \|_{L^q_t W^{s,r}_h} \lesssim \| f_h \|_{H^{\frac{\alpha}{2}}_h}.
\end{equation*}
Furthermore it can be directly verified that $(q,r) \in [2,\infty]^2$. From the Strichartz pair relation and the definition of $q$, we have $r \leq \infty$ iff $\alpha > \frac{8}{7}$. Lastly by choosing $\delta>0$ sufficiently small, $p-1<q$ is satisfied.
\end{proof}
\begin{remark}
From \eqref{balance}, we have
\begin{equation*}
    2 \leq p-1 < \frac{2\alpha}{d-\alpha},
\end{equation*}
from which the definition of $q$ in \Cref{discrete linfty} is motivated.
\end{remark}

\begin{lemma}
Assume the hypothesis of \Cref{mainthm} and let $q$ be given by \Cref{discrete linfty}. Given $u_0 \in H^{\frac{\alpha}{2}}(\mathbb{R}^2)$, let $T \simeq \| u_0 \|_{H^{\frac{\alpha}{2}}}^{-\frac{1}{\frac{1}{p-1}-\frac{1}{q}}}$. By \Cref{dnls}, let $u,u_h$ be the well-posed solutions corresponding to initial data $u_0,u_{0,h}$. Then $u,u_h$ satisfy
\begin{equation}\label{uniformestimates}
\begin{split}
\| u \|_{L^\infty_{t\in [0,T]}H^{\frac{\alpha}{2}}_x} + \| u \|_{L^q_{t\in [0,T]}L^\infty_x} &\lesssim \| u_0 \|_{H^{\frac{\alpha}{2}}},\\
\| u_h \|_{L^\infty_{t\in [0,T]}H^{\frac{\alpha}{2}}_h} + \| u_h \|_{L^q_{t\in [0,T]}L^\infty_h} &\lesssim \| u_{0,h} \|_{H^{\frac{\alpha}{2}}_h}.
\end{split}
\end{equation}
\end{lemma}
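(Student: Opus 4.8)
The plan is to re-run the contraction-mapping proof underlying the local theory of \Cref{dnls}, but in a function space tied to the Strichartz pair $(q,r)$ of \Cref{discrete linfty}, while checking that no constant depends on $h$; the stated existence time $T\simeq\|u_0\|_{H^{\frac{\alpha}{2}}}^{-1/(\frac{1}{p-1}-\frac{1}{q})}$ then emerges from the argument. I would establish the bound for the discrete solution $u_h$ in detail and obtain the one for $u$ by repeating the argument with the classical Strichartz estimates for $e^{-it(-\Delta)^{\frac{\alpha}{2}}}$ on $\mathbb{R}^2$ that feed the continuum part of \Cref{dnls}; these are admissible for at least the same range of $\alpha$, so the hypotheses suffice, and the same $q$ may be used. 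The uniformity that links the two existence times is $\|u_{0,h}\|_{H^{\frac{\alpha}{2}}_h}=\|d_hu_0\|_{H^{\frac{\alpha}{2}}_h}\lesssim\|u_0\|_{H^{\frac{\alpha}{2}}}$ from \Cref{series}.

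First I would fix $R:=2C_0\|u_0\|_{H^{\frac{\alpha}{2}}}$, where $C_0$ is the larger of $1$ (the operator norm of the unitary group $U_h(t)$ on $H^{\frac{\alpha}{2}}_h$) and the constant of \Cref{discrete linfty}, and run the Duhamel map
\begin{equation*}
\Phi(v)(t)=U_h(t)u_{0,h}-i\mu\int_0^t U_h(t-s)\big(|v|^{p-1}v\big)(s)\,ds
\end{equation*}
on the complete metric space $X_T:=\{\,v\in C([0,T];H^{\frac{\alpha}{2}}_h):\ \|v\|_{L^\infty_{t\in[0,T]}H^{\frac{\alpha}{2}}_h}+\|v\|_{L^q_{t\in[0,T]}L^\infty_h}\le R\,\}$. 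Unitarity and \Cref{discrete linfty} give $\|U_h(\cdot)u_{0,h}\|_{L^\infty_TH^{\frac{\alpha}{2}}_h\cap L^q_TL^\infty_h}\le C_0\|u_{0,h}\|_{H^{\frac{\alpha}{2}}_h}\le R/2$, uniformly in $h$. For the retarded integral the $L^\infty_TH^{\frac{\alpha}{2}}_h$ part follows from unitarity and Minkowski, while the $L^q_TL^\infty_h$ part follows from \Cref{discrete linfty} together with the Christ--Kiselev lemma (legitimate because $q>1$; this reduction is purely functional-analytic, hence $h$-free), so that
\begin{equation*}
\Big\|\int_0^t U_h(t-s)F(s)\,ds\Big\|_{L^\infty_TH^{\frac{\alpha}{2}}_h\cap L^q_TL^\infty_h}\lesssim\|F\|_{L^1_TH^{\frac{\alpha}{2}}_h}.
\end{equation*}

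Next I would bound the nonlinearity by the discrete fractional Leibniz rule on $h\mathbb{Z}^2$ — available since $0<\tfrac{\alpha}{2}<1$ and $z\mapsto|z|^{p-1}z$ is $C^1$ (indeed $C^2$, as $p\ge3$) — and then apply H\"older in time:
\begin{equation*}
\big\||v|^{p-1}v\big\|_{L^1_TH^{\frac{\alpha}{2}}_h}\lesssim\int_0^T\|v(s)\|_{L^\infty_h}^{p-1}\,\|v(s)\|_{H^{\frac{\alpha}{2}}_h}\,ds\lesssim T^{1-\frac{p-1}{q}}\,\|v\|_{L^q_TL^\infty_h}^{p-1}\,\|v\|_{L^\infty_TH^{\frac{\alpha}{2}}_h},
\end{equation*}
the exponent $1-\frac{p-1}{q}$ being positive exactly because \Cref{discrete linfty} supplies $p-1<q$. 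Combining the three bounds, $\|\Phi(v)\|_{X_T}\le R/2+CT^{1-\frac{p-1}{q}}R^{p}$, and the difference version of the nonlinear estimate (also uniform in $h$, using $p\ge3$) makes $\Phi$ a contraction on $X_T$, both conclusions holding once $CT^{1-\frac{p-1}{q}}R^{p-1}\le\frac12$, i.e. $T\lesssim R^{-1/(\frac{1}{p-1}-\frac{1}{q})}$. Since $R\simeq\|u_0\|_{H^{\frac{\alpha}{2}}}$ and $\|u_{0,h}\|_{H^{\frac{\alpha}{2}}_h}\lesssim\|u_0\|_{H^{\frac{\alpha}{2}}}$ up to $h$-independent constants, this is the stated $T$; the unique fixed point of $\Phi$ in $X_T$ lies in $C([0,T];L^2_h)$ (as $H^{\frac{\alpha}{2}}_h\hookrightarrow L^2_h$), so by uniqueness of the global $L^2_h$ flow in \Cref{dnls} it equals $u_h$ on $[0,T]$, and $\|u_h\|_{X_T}\le R$ is the second line of \eqref{uniformestimates}. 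The first line follows verbatim for $u$, using uniqueness in $C([0,T];H^{\frac{\alpha}{2}}(\mathbb{R}^2))$.

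The only ingredient not already packaged in \Cref{discrete linfty} is that the discrete fractional Leibniz constant is independent of $h$, and this is the step I would single out to verify; I do not expect the contraction itself to pose any difficulty. It is, however, routine: after the rescaling $\xi\mapsto\xi/h$ the multipliers of $\langle\nabla_h\rangle^{\frac{\alpha}{2}}$ obey Mikhlin and Coifman--Meyer bounds with $h$-independent constants on the relevant frequency annuli, so the paraproduct decomposition of $|v|^{p-1}v$ closes uniformly in $h$ just as on $\mathbb{R}^2$; one may equally quote the corresponding estimates in \cite{hong2019strong,hong2018uniform}. Finally, $\max(\tfrac{8}{7},\tfrac{2(p-1)}{p+1})<\alpha<2$ enters only through \Cref{discrete linfty} — it is what yields $(q,r)\in[2,\infty]^2$ with $p-1<q$ — so no additional hypothesis is needed.
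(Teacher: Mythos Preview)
Your proposal is correct and follows essentially the same route as the paper: the paper's proof merely sketches ``the estimate for $u$ is derived from the proof of local well-posedness by the contraction mapping argument,'' does the same for $u_h$ with time $T_h\sim\|u_{0,h}\|_{H^{\alpha/2}_h}^{-1/(\frac{1}{p-1}-\frac{1}{q})}$, and invokes \Cref{series} to conclude $T_h\gtrsim T$ uniformly in $h$. You have simply unpacked that sketch, supplying the specific tools (Christ--Kiselev for the retarded Strichartz bound, the $h$-uniform fractional Leibniz rule from \cite{hong2018uniform,hong2019strong}) that the paper leaves implicit.
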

\begin{proof}
The estimate for $u$ is derived from the proof of local well-posedness by the contraction mapping argument. Similarly the time of existence for the discrete evolution that ensures the estimate \eqref{uniformestimates} is $T_h \sim \| u_{0,h} \|_{H^{\frac{\alpha}{2}}_h}^{-\frac{1}{\frac{1}{p-1}-\frac{1}{q}}}$. By \Cref{series}, $T_h \gtrsim T$ uniformly in $h$, and therefore $u,u_h$ are well-defined on $[0,T]$.
\end{proof}

\begin{proof}[Proof of \Cref{mainthm}]
Let $u_0 \in H^{\frac{\alpha}{2}}(\mathbb{R}^2)$. There exists $T \sim \| u_0 \|_{H^{\frac{\alpha}{2}}}^{-\frac{1}{\frac{1}{p-1}-\frac{1}{q}}}$ and $u \in C([0,T];H^{\frac{\alpha}{2}}(\mathbb{R}^2))$, unique in a (smaller) Strichartz space (see \cite[Theorem 1.1]{1534-0392_2015_6_2265}), satisfying
\begin{equation*}
  u(t) = U(t)u_0 - i\mu \int_0^t U(t-s)(|u|^{p-1}u)(s)ds.  
\end{equation*}
For $h>0$, consider $u_{0,h} = d_h u_0 \in L^2_h$ and the global solution $u_h \in C^1([0,\infty);L^2_h)$ given by \Cref{dnls}. Similarly as above,
\begin{equation*}
p_h u_h(t) = p_h U_h(t) u_{0,h} -i\mu \int_0^t p_h U_h(t-s)(|u_h|^{p-1}u_h)(s)ds. 
\end{equation*}
The difference $p_h u_h(t) - u(t)$ is given by
\begin{equation*}
\begin{split}
&=p_h U_h(t)u_{0,h} - U(t)u_0\\
&\hspace{0.3cm}- i\mu \int_0^t \Big(p_h U_h(t-s) - U(t-s)p_h\Big)(|u_h|^{p-1}u_h)(s)ds\\
&\hspace{0.3cm}-i\mu \int_0^t U(t-s)\Big(p_h(|u_h|^{p-1}u_h)(s) - |p_hu_h|^{p-1} p_h u_h(s)\Big)ds\\
&\hspace{0.3cm} -i\mu \int_0^t U(t-s)\Big(|p_hu_h|^{p-1}p_hu_h(s)-|u|^{p-1}u(s)\Big)ds=: I+II+III+IV.
\end{split}
\end{equation*}
Following the proof of \cite[Theorem 1.1]{hong2019strong}, we have
\begin{equation*}
\begin{split}
\| I \|_{L^2} &\lesssim h^{\frac{\alpha}{2+\alpha}} \langle t \rangle \| u_0 \|_{H^{\frac{\alpha}{2}}}\\
\| II \|_{L^2},\: \| III \|_{L^2} &\lesssim h^{\frac{2}{2+\alpha}} \langle t \rangle^2 \| u_0 \|_{H^{\frac{\alpha}{2}}}^p\\
\| IV \|_{L^2} &\lesssim \int_0^t (\| u_h(s)\|_{L^\infty_h}+\| u(s)\|_{L^\infty_x})^{p-1} \| p_h u_h (s) - u(s)\|_{L^2}ds.
\end{split}
\end{equation*}
and altogether,
\begin{equation*}
\begin{split}
\| p_h u_h (t) - u(t)\|_{L^2} \lesssim h^{\frac{\alpha}{2+\alpha}}\langle t \rangle^2 ( \| u_0 \|_{H^{\frac{\alpha}{2}}}+\| u_0 \|_{H^{\frac{\alpha}{2}}}^p) +\int_0^t (\| u_h(s)\|_{L^\infty_h}+\| u(s)\|_{L^\infty_x})^{p-1} \| p_h u_h (s) - u(s)\|_{L^2}ds.
\end{split}
\end{equation*}
By the Gronwall's inequality,
\begin{equation}\label{gronwall}
\begin{split}
\| p_h u_h (t) - u(t)\|_{L^2} \lesssim h^{\frac{\alpha}{2+\alpha}}\langle t \rangle^2 ( \| u_0 \|_{H^{\frac{\alpha}{2}}}+\| u_0 \|_{H^{\frac{\alpha}{2}}}^p) e^{\int_0^t (\| u_h(s)\|_{L^\infty_h}+\| u(s)\|_{L^\infty_x})^{p-1} ds}.
\end{split}
\end{equation}
Applying \eqref{uniformestimates} to \eqref{gronwall}, we obtain \eqref{error estimate} where $C_1,C_2>0$ depend on various parameters including $\| u_0 \|_{H^{\frac{\alpha}{2}}}$, but not $h$. This completes the proof.
\end{proof}
\section{Proof of \Cref{conjecture}.}\label{main proof}
Let $\alpha \in (1,2)$ unless otherwise specified. For $\zeta \in C^\infty_c(M)$, the quantity $J_{\Phi_v,\zeta}$ is at worst a non-degenerate integral almost everywhere with respect to the Lebesgue measure on $\mathbb{R}^2_v$. 

\begin{lemma}\label{non-stationary}
Let $\zeta \in C^\infty_c(U)$ where $U \subseteq \mathbb{R}^d \setminus \{0\}$. For $v \in \mathbb{R}^d$, suppose $\inf\limits_{\xi\in U}|v-\nabla w(\xi)| \geq m>0$ on $U$. Then $|J_{\Phi_v,\zeta}| \lesssim_{n,m,\zeta} |\tau|^{-n}$ for all $\tau \in \mathbb{R}\setminus\{0\},\: n \in \mathbb{N}$.
\end{lemma}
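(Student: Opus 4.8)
This is a standard non-stationary (non-degenerate) phase estimate, and the plan is to prove it by iterated integration by parts against the first-order differential operator generated by the phase. Write $K := \mathrm{supp}(\zeta) \subseteq U \subseteq \mathbb{R}^d \setminus \{0\}$, a compact set on which $\Phi_v$ is smooth (indeed real-analytic), as noted above, since $\sum_i \sin^2(\xi_i/2)$ is bounded away from zero there. By hypothesis $\nabla \Phi_v(\xi) = v - \nabla w(\xi)$ satisfies $|\nabla \Phi_v(\xi)| \ge m > 0$ for $\xi \in K$, so the operator
\[
L g := \frac{1}{i\tau}\, \frac{\nabla \Phi_v \cdot \nabla g}{|\nabla \Phi_v|^2}
\]
is smooth on a neighborhood of $K$ and satisfies $L\bigl(e^{i\tau \Phi_v}\bigr) = e^{i\tau \Phi_v}$.

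Since $\zeta$ is compactly supported, integrating by parts produces no boundary terms, so $J_{\Phi_v,\zeta}(\tau) = \int e^{i\tau \Phi_v} L^t \zeta\, d\xi$, where $L^t \zeta = -\frac{1}{i\tau}\, \nabla \cdot \bigl( \tfrac{\nabla \Phi_v}{|\nabla \Phi_v|^2}\, \zeta \bigr)$. Iterating $n$ times gives $J_{\Phi_v,\zeta}(\tau) = \int_{\mathbb{R}^d} e^{i\tau \Phi_v(\xi)}\, (L^t)^n \zeta(\xi)\, d\xi$. The next step is to expand $(L^t)^n \zeta$ and bound it pointwise on $K$. Each application of $L^t$ extracts a factor $(i\tau)^{-1}$ and, by the product and quotient rules, yields a finite sum of terms each of which is a product of a derivative $\partial^\beta \zeta$ with $|\beta| \le n$, a polynomial in the derivatives $\partial^\gamma w$ with $2 \le |\gamma| \le n+1$, and powers of $\nabla \Phi_v$ and of $|\nabla \Phi_v|^{-2}$ whose total negative homogeneity in $|\nabla \Phi_v|$ is at most $2n$; crucially the only $v$-dependence enters through $\nabla \Phi_v$, and it always occurs inside the bounded combination $\nabla \Phi_v / |\nabla \Phi_v|^2$ (of modulus $\le 1/m$) or through the explicitly $v$-free higher derivatives $\partial^\gamma \Phi_v = -\partial^\gamma w$ with $|\gamma| \ge 2$, so no term grows with $|v|$.

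To conclude, all of $\sup_K |\partial^\gamma w|$ for $|\gamma| \le n+1$ and $\sup_K |\partial^\beta \zeta|$ for $|\beta| \le n$ are finite and depend only on $\zeta$ (equivalently, on $K$), while every negative power of $|\nabla \Phi_v|$ is dominated by the corresponding power of $m^{-1}$. Hence $\| (L^t)^n \zeta \|_{L^\infty} \le C_{n,m,\zeta}\, |\tau|^{-n}$ with $C_{n,m,\zeta}$ a polynomial in $m^{-1}$ with coefficients depending on $n$ and $\zeta$, and since $(L^t)^n \zeta$ is supported in $K$ we get $|J_{\Phi_v,\zeta}(\tau)| \le |K|\, \| (L^t)^n \zeta \|_{L^\infty} \lesssim_{n,m,\zeta} |\tau|^{-n}$ for $|\tau| \ge 1$. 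For $0 < |\tau| < 1$ the trivial bound $|J_{\Phi_v,\zeta}(\tau)| \le \| \zeta \|_{L^1} \lesssim_\zeta 1 \le |\tau|^{-n}$ covers the remaining range, giving the claim for all $\tau \in \mathbb{R} \setminus \{0\}$.

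The argument is entirely routine; the only point requiring care is the bookkeeping in the third paragraph, namely verifying that iterating $L^t$ never creates a factor of $\nabla \Phi_v$ — and hence of $v$ — that fails to be compensated by a strictly larger negative power of $|\nabla \Phi_v|$. This is precisely what forces the implied constant to depend on $v$ only through the single scalar $m$, and it is what the statement "$\lesssim_{n,m,\zeta}$" encodes.
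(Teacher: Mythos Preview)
Your proof is correct and matches the paper's approach: the paper states this lemma without proof, merely indicating beforehand that the rapid decay follows ``by integration by parts, with the implicit constant dependent on $n$ and the distance between $v$ and the light cone.'' Your argument spells out exactly this standard non-stationary phase computation, including the key observation that all $v$-dependence enters only through $\nabla\Phi_v/|\nabla\Phi_v|^2$ so that the constant depends on $v$ solely via $m$.
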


Non-degenerate critical points are treated by the method of stationary phase. A well-known asymptotics \cite[Chapter 8, Proposition 6]{stein1993harmonic} is given.

\begin{lemma}\label{stationary}
Let $\xi \in M,\: v \in \mathbb{R}^2$ satisfy $v = \nabla w(\xi)$ and $\det D^2 w(\xi) \neq 0$. Then there exists a small neighborhood around $\xi$ such that for all $\zeta \in C^\infty_c$ supported in the neighborhood,
\begin{equation*}
    J_{\Phi_v,\zeta} = a_0\tau^{-1} + o(\tau^{-1}),
\end{equation*}
as $\tau \rightarrow \infty$ where
\begin{equation*}
a_0 = e^{i\frac{\pi}{4}sgn D^2w(\xi)}e^{i\tau w(\xi)} \zeta(\xi)\sqrt{\frac{2\pi}{|\det D^2w(\xi)|}}.    
\end{equation*}
\end{lemma}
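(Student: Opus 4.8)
The statement is the classical method of stationary phase in two variables — it is \cite[Chapter~8, Proposition~6]{stein1993harmonic} applied to the phase $\Phi_v$ — so my plan is to verify that the hypotheses place us in the non-degenerate setting and to indicate how the coefficient $a_0$ is produced from the Hessian data.

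First I would fix the neighborhood. Since $\xi\in M$ and $w$ is real-analytic on $M$ — the base $\sum_i\sin^2(\xi_i/2)$ is analytic and strictly positive off the origin, and $t\mapsto t^{\alpha/2}$ is analytic for $t>0$ — the phase $\Phi_v$ is real-analytic near $\xi$. From $\nabla\Phi_v(\xi')=\nabla w(\xi)-\nabla w(\xi')$ the point $\xi$ is a critical point, and $D^2\Phi_v(\xi)=-D^2w(\xi)$ is invertible by hypothesis; hence $\nabla w$ is a local diffeomorphism at $\xi$, so $\xi$ is the unique critical point of $\Phi_v$ in a sufficiently small ball $B\subset M$, and I would take the asserted neighborhood to be $B$. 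For any $\zeta\in C^\infty_c(B)$ the point $\xi$ is then the only stationary point on $\mathrm{supp}\,\zeta$, and the contribution of the region away from $\xi$ is $O(\tau^{-\infty})$ by repeated integration by parts (the mechanism of \Cref{non-stationary}).

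Next I would reduce to a model integral. By the analytic Morse lemma there is a diffeomorphism $\psi$ from a neighborhood of $0$ onto $B$ with $\psi(0)=\xi$ and $\Phi_v(\psi(y))=\Phi_v(\xi)+\varepsilon_1y_1^2+\varepsilon_2y_2^2$ with $(\varepsilon_1,\varepsilon_2)\in\{\pm1\}^2$; differentiating this identity twice at $y=0$ gives $D\psi(0)^\top D^2\Phi_v(\xi)D\psi(0)=2\,\mathrm{diag}(\varepsilon_1,\varepsilon_2)$, hence $\varepsilon_1+\varepsilon_2=\mathrm{sgn}\,D^2\Phi_v(\xi)$ and $|\det D\psi(0)|=2\,|\det D^2w(\xi)|^{-1/2}$. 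After this change of variables,
\begin{equation*}
J_{\Phi_v,\zeta}(\tau)=e^{i\tau\Phi_v(\xi)}\int e^{i\tau(\varepsilon_1y_1^2+\varepsilon_2y_2^2)}\,a(y)\,dy,\qquad a:=(\zeta\circ\psi)\,|\det D\psi|\in C^\infty_c,
\end{equation*}
and iterating in each variable the one-dimensional Fresnel asymptotic $\int e^{i\tau\varepsilon y^2}b(y)\,dy=e^{i\frac{\pi}{4}\varepsilon}\sqrt{\pi/\tau}\,b(0)+O(\tau^{-3/2})$ (equivalently, expressing the integral by Plancherel against the $\tau$-rescaled Fourier transform of $e^{i(\varepsilon_1y_1^2+\varepsilon_2y_2^2)}$, which acts as an approximate identity on $\widehat a$ as $\tau\to\infty$) yields
\begin{equation*}
J_{\Phi_v,\zeta}(\tau)=e^{i\tau\Phi_v(\xi)}\,e^{i\frac{\pi}{4}\mathrm{sgn}\,D^2\Phi_v(\xi)}\,\frac{\pi}{\tau}\,a(0)+o(\tau^{-1}).
\end{equation*}
Substituting $a(0)=2\,\zeta(\xi)\,|\det D^2w(\xi)|^{-1/2}$ and re-expressing via $D^2\Phi_v(\xi)=-D^2w(\xi)$ identifies the leading coefficient as the displayed $a_0$; the remainder can be sharpened to $O(\tau^{-2})$ through the full asymptotic expansion, though only the leading order is needed.

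I do not expect a genuine obstacle here, as the lemma is essentially a textbook computation. The only point demanding attention is that $w$ loses analyticity at the origin, so the Morse normalization must be performed inside $M$; this is harmless since $\zeta$ is compactly supported in $M$ by hypothesis.
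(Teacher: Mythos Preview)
Your argument is correct and is essentially the standard textbook derivation of the two-dimensional stationary phase expansion; the paper does not give an independent proof of this lemma at all but simply cites \cite[Chapter 8, Proposition 6]{stein1993harmonic}, which is exactly the result you invoke and then unpack via the Morse lemma and Fresnel integrals. One minor remark: your final computation produces the correct leading coefficient $e^{i\tau\Phi_v(\xi)}e^{i\frac{\pi}{4}\mathrm{sgn}\,D^2\Phi_v(\xi)}\cdot 2\pi\,\zeta(\xi)\,|\det D^2w(\xi)|^{-1/2}$, which does not literally match the displayed $a_0$ (the statement appears to carry harmless typos in the phase and in $\sqrt{2\pi}$ versus $2\pi$), so your sentence ``identifies the leading coefficient as the displayed $a_0$'' is a slight overstatement---but this is an issue with the formula as printed, not with your method.
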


If $D^2w(\xi)$ is singular, i.e., if $\xi \in M$ satisfies $H(\xi,\alpha):=\det D^2w(\xi)=0$, then the asymptotic formula of \Cref{stationary} is not applicable; $\xi=0$ is not considered since the oscillatory integrals from \Cref{conjecture} have test functions supported outside of the origin. Note that
\begin{equation}\label{hessian}
    D^2 w(\xi) =-\frac{\alpha}{16w(\xi)^{\frac{4}{\alpha}-1}}\begin{pmatrix}
\alpha\cos^2 \xi_1 +2(\cos \xi_2-2)\cos \xi_1 + 2- \alpha & (2-\alpha)\sin \xi_1 \sin \xi_2\\
(2-\alpha)\sin \xi_1 \sin \xi_2 & \alpha\cos^2 \xi_2 +2(\cos \xi_1-2)\cos \xi_2 + 2- \alpha
\end{pmatrix}
\end{equation}
is well-defined for $\xi \in M$ and blows up (in the sense of determinant) as $\xi \rightarrow 0$. It can be shown directly that $D^2 w(\xi)$ is not the zero matrix for any $\xi \in M$. If $D^2w(\xi)$ is of full rank, then the decay of $J_{\Phi_{v_\xi},\zeta}$ can be analyzed via \Cref{non-stationary,stationary}, and henceforth suppose $rank(D^2w(\xi))=1$. Then $H(\xi,\alpha) = \Tilde{h}(\xi,\alpha) h(\xi,\alpha)$ where
\begin{equation*}
\begin{split}
\Tilde{h}(\xi,\alpha)&=\frac{\alpha^2}{128w(\xi)^{\frac{8}{\alpha}-2}}(\cos\xi_1+\cos\xi_2-2)\\
h(\xi,\alpha)&=\alpha\cos \xi_1 \cos \xi_2 (\cos \xi_1+\cos \xi_2) -4\cos \xi_1 \cos \xi_2 + (2-\alpha)(\cos \xi_1+\cos \xi_2).
\end{split}
\end{equation*}
Since $h,\Tilde{h}$ are symmetric under $\xi_1\mapsto \pm \xi_1,\:\xi_2 \mapsto \pm \xi_2,\: (\xi_1,\xi_2)\mapsto (\xi_2,\xi_1)$, the domain of analysis could be restricted to the first quadrant either above or below the identity $\xi_1=\xi_2$. By definition, $\Tilde{h}$ is nonzero on $M$ and therefore the roots of $H$ correspond to those of $h$. Following the approach in \cite{borovyk2017klein}, the representation of $h$ is in polynomials under the change of variables $a=\cos \xi_1,\: b= \cos \xi_2$, and is given by
\begin{equation*}
    h(a,b,\alpha)=\alpha ab (a+b) -4 ab +(2-\alpha)(a+b).
\end{equation*}

For brevity, let
\begin{equation*}
\begin{split}
E_\alpha&= \{(a,b)\in [-1,1)^2: h(a,b,\alpha)=0\},\\
E &= \bigcup\limits_{\alpha \in (1,2)}E_\alpha.
\end{split}
\end{equation*}
For $\alpha \in [1,2)$, since $\nabla_{(a,b)}h \neq 0$ on $[-1,1)^2$, $E_\alpha$ is a smooth one-dimensional embedded submanifold by the Implicit Function Theorem. This is false for $\alpha=2$; a cusp $(a,b)=(0,0)$, which corresponds to $\xi=(\pm \frac{\pi}{2},\pm \frac{\pi}{2})$, appears when $\alpha=2$ since $\nabla_{(a,b)}h(0,0) = 0$. By direct computation, $E_{\alpha_1}\cap E_{\alpha_2} = \{(0,0)\}$ for all $\alpha_1,\alpha_2 \in [1,2]$. Hence there exists a smooth map $E \setminus \{0\} \ni (a,b)\mapsto \alpha \in (1,2)$ satisfying $h(a,b,\alpha)=0$ by the Implicit Function Theorem, observing that $\partial_\alpha h(a,b,\alpha) = (a+b)(ab-1)$ is non-vanishing on $E \setminus \{0\}$. Observe that $E_\alpha$ consists of two connected components; one component, say $\Gamma^1_{(a,b)}(\alpha)$, passes through the origin  whereas the other, say $\Gamma^2_{(a,b)}(\alpha)$, does not. As $\alpha \rightarrow 1+$, $\Gamma^2_{(a,b)}(\alpha)$ becomes arbitrarily close to $(a,b)=(1,1)$ whose corresponding point in $\mathbb{T}^2$, the origin, is not in $M$. In fact,  
\begin{equation*}
\bigcap_{\alpha_0 \in (1,2]}\bigcup_{\alpha \in [1,\alpha_0)} E_\alpha=\Gamma^1_{(a,b)}(1),\:  \bigcap_{\alpha_0\in [1,2)}\bigcup_{\alpha\in (\alpha_0,2]}E_\alpha=\{ab=0\}.
\end{equation*}
See \Cref{contour} for the contour plots of $E_\alpha$. 

\begin{figure}[h]
\centering
\includegraphics[scale=0.43]{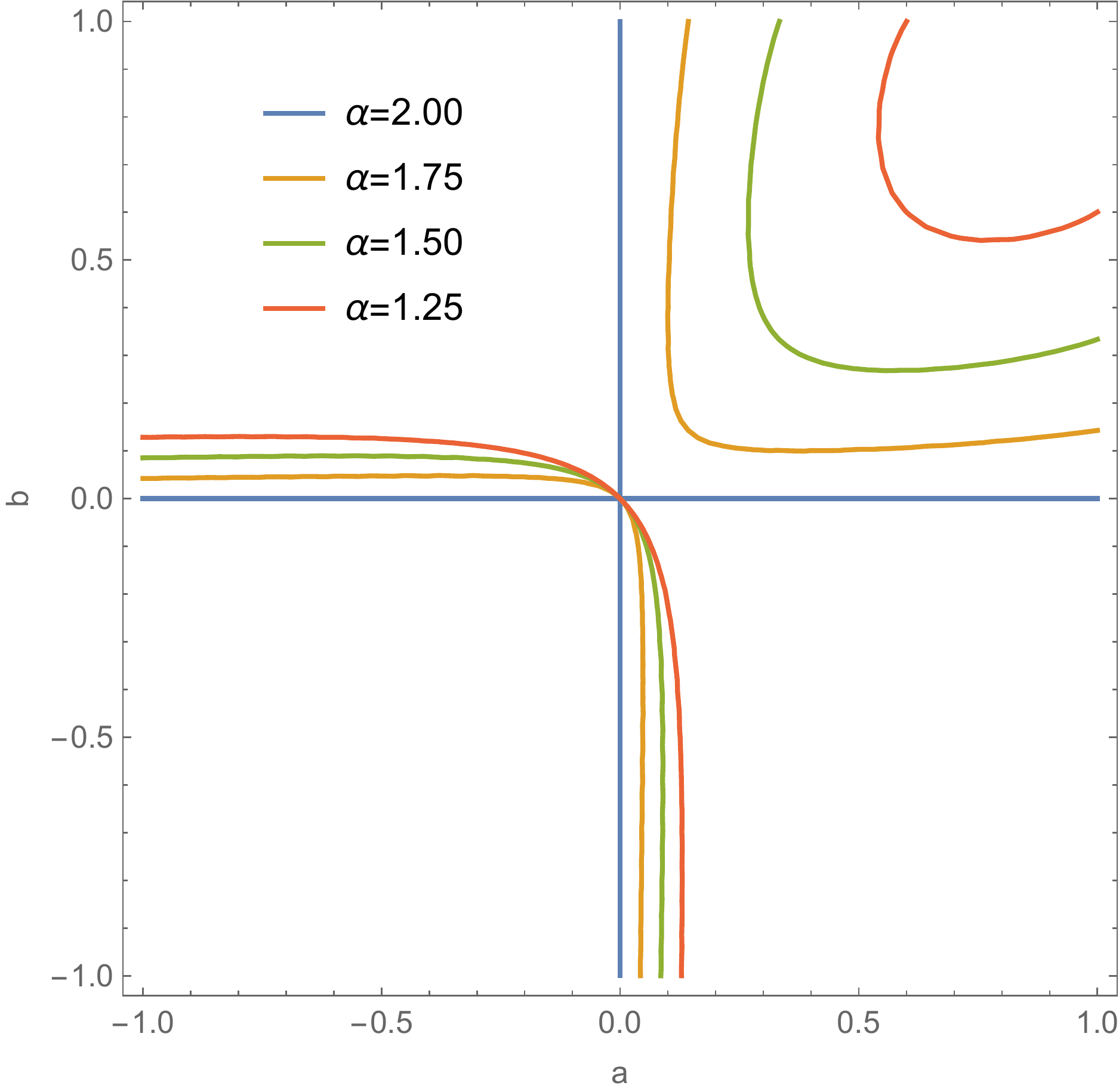}
\includegraphics[scale=0.42]{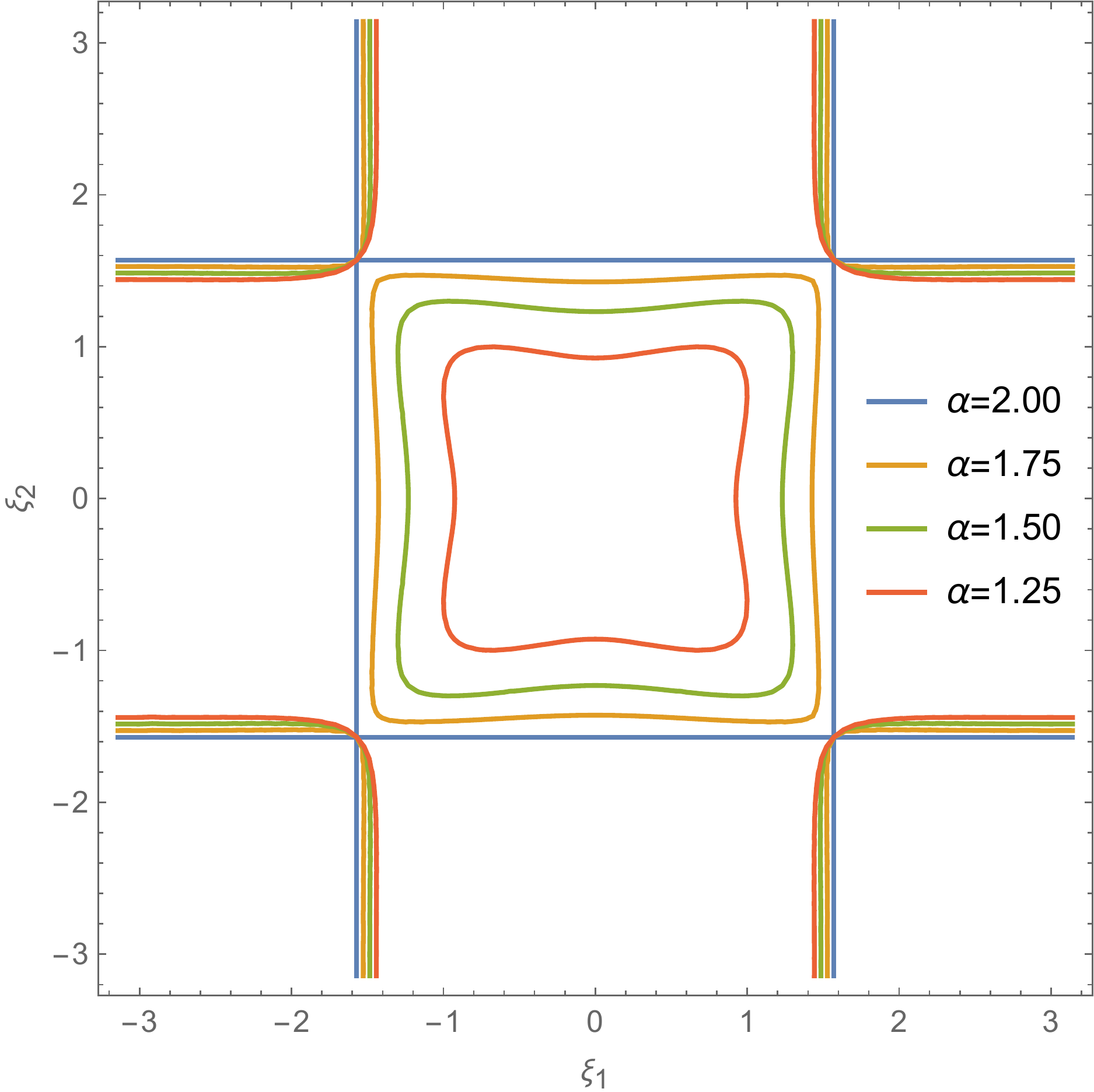}
\caption{The contour plots of $E_\alpha$ and its correspondence in $(\xi_1,\xi_2) \in M$ are given for different values of $\alpha$.}\label{contour}
\end{figure}

Let $\{e_1,e_2\}$ be the standard basis of $\mathbb{R}^2$, and with a slight abuse of notation, consider $\{e_1,e_2\}$ as a global orthonormal frame of $\mathbb{T}^2$. For each $\xi \in M$, let $\{k_1(\xi),k_2(\xi)\}$ be an orthonormal basis of $T_\xi M$, the tangent space at $\xi$, with the coordinate system $(x,y)$, i.e., for all $v \in T_\xi M$, there exists unique $(x,y) \in \mathbb{R}^2$ such that $v=xk_1(\xi)+yk_2(\xi)$. Moreover assume $\{k_1(\xi),k_2(\xi)\}$ diagonalizes $D^2w(\xi)$ as
\begin{equation*}
D^2w(\xi) = \begin{pmatrix}
k_1(\xi) & k_2(\xi)    
\end{pmatrix}
\begin{pmatrix}
\partial_{xx}w(\xi) & 0\\0& \partial_{yy}w(\xi)
\end{pmatrix}
\begin{pmatrix}
k_1(\xi) & k_2(\xi)    
\end{pmatrix}^{-1},
\end{equation*}
where $\partial_x = k_1(\xi) \cdot \nabla_\xi,\: \partial_y = k_2(\xi) \cdot \nabla_\xi$ are the directional derivatives. Suppose $\partial_{yy}w(\xi)=0$ for all $\xi \in E$, or equivalently, $k_2(\xi)$ is the direction along which $D^2w(\xi)$ is degenerate. Then it follows that $\partial_{xx}w(\xi)\neq 0$ since $D^2w(\xi)$ is not a zero matrix. Due to diagonalization, $\partial_{xy}w(\xi)=0$.

To further investigate the higher order directional derivatives, \cite[Lemma 5.4]{borovyk2017klein} is extended by induction and the product rule for derivatives whose proof is immediate and hence omitted.

\begin{lemma}\label{higher order}
For $m \geq 2$, let $f \in C^{m+1}(U)$ where $\xi_0 \in U \subseteq \mathbb{R}^d$. Suppose $D^2 f(\xi_0)$ has rank $d-1$ and let $k_d$ be a normalized eigenvector corresponding to eigenvalue zero. Suppose $(k_d(\xi_0)\cdot \nabla_\xi)^j f(\xi_0)=0$ for $2 \leq j \leq m$. Then $(k_d(\xi_0)\cdot \nabla_\xi)^{m+1} f(\xi_0)=0$ if and only if $(k_d(\xi_0)\cdot \nabla_\xi)^{m-1} \det D^2 f(\xi_0)=0$.
\end{lemma}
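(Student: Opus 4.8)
\emph{Proposed proof.} I would prove \Cref{higher order} by placing the degenerate direction into coordinates, factoring $\det D^2f$ through a Schur complement that isolates that direction, and then running the generalized Leibniz rule inside an induction on $m$, the base case $m=2$ being exactly \cite[Lemma 5.4]{borovyk2017klein}. First rotate so that $k_d(\xi_0)=e_d$ and write $\partial_t:=\partial_{x_d}=k_d(\xi_0)\cdot\nabla_\xi$. Since $\operatorname{rank}D^2f(\xi_0)=d-1$ and $D^2f(\xi_0)e_d=0$, the last row and column of $D^2f(\xi_0)$ vanish and the upper-left block $A:=\bigl(\partial_i\partial_jf(\xi_0)\bigr)_{i,j<d}$ is invertible. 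With $B(\xi)=\bigl(\partial_i\partial_jf(\xi)\bigr)_{i,j<d}$, $\mathbf h(\xi)=\bigl(\partial_i\partial_tf(\xi)\bigr)_{i<d}$ and $g(\xi)=\partial_t^2f(\xi)$ we have $B(\xi_0)=A$, $\mathbf h(\xi_0)=0$, $g(\xi_0)=0$, and near $\xi_0$ the Schur complement gives
\begin{equation*}
\det D^2f(\xi)=\det B(\xi)\,\phi(\xi),\qquad \phi:=\partial_t^2f-\mathbf h^{T}B^{-1}\mathbf h,
\end{equation*}
with $\det B$ smooth and nonvanishing near $\xi_0$.

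For $m=2$ this already closes: $\phi(\xi_0)=0$ gives $\partial_t\det D^2f(\xi_0)=\det A\,\partial_t\phi(\xi_0)$, and $\partial_t\phi(\xi_0)=\partial_t^3f(\xi_0)$ because every term produced by $\partial_t\bigl(\mathbf h^{T}B^{-1}\mathbf h\bigr)$ keeps an undifferentiated factor from $\mathbf h$, which vanishes at $\xi_0$; since $\det A\neq0$ the equivalence follows. For the inductive step, assume the statement for $m-1$. The level-$m$ hypothesis implies every lower-level hypothesis, so applying the level-$\ell$ conclusions for $\ell=2,\dots,m-1$ (whose left sides $\partial_t^{\ell+1}f(\xi_0)=0$ hold because $\ell+1\le m$) forces $\partial_t^{k}\det D^2f(\xi_0)=0$ for $1\le k\le m-2$; dividing by the nonvanishing $\det B$ and inducting on $k$ via Leibniz then gives $\partial_t^{k}\phi(\xi_0)=0$ for $0\le k\le m-2$. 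Feeding this into Leibniz for $\det D^2f=\det B\cdot\phi$ collapses the sum to
\begin{equation*}
\partial_t^{m-1}\det D^2f(\xi_0)=\det A\,\partial_t^{m-1}\phi(\xi_0)=\det A\Bigl(\partial_t^{m+1}f(\xi_0)-\partial_t^{m-1}\bigl(\mathbf h^{T}B^{-1}\mathbf h\bigr)(\xi_0)\Bigr).
\end{equation*}
Granting the cross-term vanishing $\partial_t^{m-1}\bigl(\mathbf h^{T}B^{-1}\mathbf h\bigr)(\xi_0)=0$, we obtain $\partial_t^{m-1}\det D^2f(\xi_0)=\det A\,\partial_t^{m+1}f(\xi_0)$, and the ``iff'' follows from $\det A\neq0$.

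The cross-term vanishing is the one genuinely delicate point, and I expect it to be the main obstacle. Because $\mathbf h^{T}B^{-1}\mathbf h$ is quadratic in the components of $\mathbf h$ and $\mathbf h(\xi_0)=0$, the Leibniz expansion of $\partial_t^{m-1}$ survives only through terms that place at least one $\partial_t$ on each of the two $\mathbf h$-factors, i.e.\ terms carrying a factor $\partial_i\partial_t^{a+1}f(\xi_0)$ with $a\ge1$; one must show these cancel. The already established $\partial_t^{k}\phi(\xi_0)=0$ for $k\le m-2$ says that $\mathbf h^{T}B^{-1}\mathbf h$ restricted to the line $\xi_0+te_d$ vanishes to order $m-2$ at $t=0$, and the remaining gap is to upgrade this to order $m-1$ — which is not automatic, since $\mathbf h^{T}B^{-1}\mathbf h$ is an indefinite quadratic form and so has no forced parity of vanishing. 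The clean way through, in the setting where this lemma is used ($f=w$), is to exploit that the relevant directional derivatives vanish not merely at $\xi_0$ but along the one-dimensional degeneracy set $E$ (the running hypothesis $\partial_{yy}w\equiv0$ on $E$, extended to the orders at hand), with $E$ transverse to the degenerate direction at a fold point: transversality plus $\partial_t^{k+1}f(\xi_0)=0$ pins the full gradient $\nabla\partial_t^{k}f(\xi_0)$ to zero for the relevant $k$, hence $\partial_i\partial_t^{a+1}f(\xi_0)=0$ and every surviving cross term dies. The non-transverse cases (the cusp points $K_3$ and a neighborhood of the origin) would be handled, as elsewhere in the paper, by the separate direct computation already invoked for \Cref{conjecture}. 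Thus the difficulty is not the algebraic collapse of the leading term but certifying that the cross terms carry enough vanishing, which is exactly where the geometry of $E$ — rather than pointwise data at $\xi_0$ — must be brought in.
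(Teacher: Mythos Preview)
Your diagnosis of the cross-term $\partial_t^{m-1}(\mathbf h^{T}B^{-1}\mathbf h)(\xi_0)$ as the genuine obstruction is exactly right, but you stop short of the correct conclusion: the lemma \emph{as stated} is false for $m\ge 3$, so no purely pointwise argument can close the gap. Take $d=2$, $\xi_0=0$, and $f(x,y)=x^2+xy^2$. Then $D^2f(0)=\begin{pmatrix}2&0\\0&0\end{pmatrix}$ has rank one with $k_2=e_2$, and $\partial_y^2 f(0)=\partial_y^3 f(0)=0$, so the hypotheses hold for $m=3$. One has $\partial_y^4 f(0)=0$, yet $\det D^2f=4x-4y^2$ gives $\partial_y^{2}\det D^2f(0)=-8\neq 0$. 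The offending term is precisely the one you isolate: $\partial_y\mathbf h(0)=\partial_1\partial_y^2 f(0)=1\neq 0$, so the quadratic cross term survives. Your proposed rescue via the geometry of the degeneracy set $E$ is therefore not a technical convenience but a genuine additional hypothesis, and it does not salvage the lemma as a standalone statement about a single point.

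As for comparison with the paper: the paper gives no proof at all, declaring the extension from \cite[Lemma~5.4]{borovyk2017klein} ``immediate'' by induction and the product rule---which is the same skeleton as your Schur--Leibniz argument, so in approach you agree. Your $m=2$ case is correct and complete, and that is the \emph{only} case the paper actually invokes (in the proof of \Cref{singularity2}); the applications at the cusp points in \Cref{cusp} proceed by direct computation of the Taylor coefficients rather than through this lemma. So the erroneous $m\ge 3$ generality is never used, and your write-up for $m=2$ already covers what the paper needs. You should simply note that the general-$m$ statement requires an extra hypothesis (e.g.\ vanishing of the mixed derivatives $\partial_i\partial_t^{k}f(\xi_0)$ for $i<d$ and $2\le k\le m-1$, or the transversality input you describe), and that the $m=2$ case suffices for the paper's purposes.
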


The inflection points of $\sin^2(\frac{\xi_i}{2})$ persist to exist as singular points of $D^2w(\xi)$ even when $\alpha<2$. By symmetry, the qualitative behavior of $J_{\Phi_v,\zeta}$ is the same near each point in $K_3$.  

\begin{lemma}\label{singularity2}
If $\xi\in E_\alpha \setminus K_3$, then $\partial_y^3 w(\xi) \neq 0$. Moreover $\partial_y^3 w(P) = 0$ for all $P \in K_3$.
\end{lemma}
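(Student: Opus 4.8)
The plan is to convert both parts of the statement into a single algebraic condition along the curve $E_\alpha$ and then verify it. Fix $\xi\in E_\alpha$. Since $D^2w(\xi)$ is never the zero matrix on $M$ while $\det D^2w(\xi)=H(\xi,\alpha)=0$ there, $D^2w(\xi)$ has rank exactly one, $k_2(\xi)$ spans its kernel, and by the construction of the frame $\partial_y^2w(\xi)=(k_2\cdot\nabla_\xi)^2w(\xi)=0$. Because $w$ is analytic away from the origin, \Cref{higher order} applies with $d=m=2$, $f=w$, $k_d=k_2$ and gives $\partial_y^3w(\xi)=0\iff\partial_yH(\xi,\alpha)=0$. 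Writing $H=\tilde h\,h$ with $\tilde h$ nonvanishing on $M$ and $h(\xi,\alpha)=0$ on $E_\alpha$, we have $\partial_yH=\tilde h\,\partial_yh$ on $E_\alpha$, so the claim reduces to showing $\partial_yh\neq0$ on $E_\alpha\setminus K_3$ and $\partial_yh=0$ on $K_3$.

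Next I would make $\partial_yh$ explicit. From \eqref{hessian}, the kernel of $D^2w(\xi)$ is spanned by $(Q,-P_1)$, equivalently by $(P_2,-Q)$, with $P_1,P_2$ the diagonal entries and $Q=(2-\alpha)\sin\xi_1\sin\xi_2$; and setting $a=\cos\xi_1$, $b=\cos\xi_2$, a direct computation from the polynomial form of $h$ yields the identities $h_a=P_2+2(\alpha-1)ab$ and $h_b=P_1+2(\alpha-1)ab$. Since $\nabla_\xi h=(-\sin\xi_1\,h_a,-\sin\xi_2\,h_b)$, pairing with the kernel vector gives
\[
\partial_yh \;\propto\; \sin\xi_2\,\big[P_1h_b-(2-\alpha)(1-a^2)h_a\big]\;=:\;\sin\xi_2\cdot G(a,b,\alpha),
\]
a polynomial in $(a,b,\alpha)$, together with the symmetric form $\partial_yh\propto\sin\xi_1\cdot G(b,a,\alpha)$. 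The cusp part is then immediate: at a point of $K_3$ one has $a=b=0$, so $\nabla_\xi h\propto(1,s)$ and $D^2w\propto\begin{pmatrix}1&s\\ s&1\end{pmatrix}$ with $s=\sin\xi_1\sin\xi_2=\pm1$; the kernel direction $(1,-s)$ is orthogonal to $\nabla_\xi h$, whence $\partial_yh=0$.

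The remaining and main step is to show $G\neq0$ on $E_\alpha\setminus\{(0,0)\}$ in the $(a,b)$-chart. I would first dispose of the boundary points where $\sin\xi_i=0$, i.e. $\xi_i=\pi$: there $Q=0$ forces one diagonal entry of \eqref{hessian} to vanish, $k_2$ becomes a coordinate axis, and the non-vanishing of $\partial_y^3w$ reduces to one explicit scalar inequality (using $\cos\xi_j\in(0,1)$ at such a point of $E_\alpha$). For the interior $|a|,|b|<1$, the identity $P_2\,G=-(2-\alpha)(1-a^2)\,G(b,a,\alpha)$ on $E_\alpha$, which follows from $P_1P_2=Q^2$ there, shows $G=0\Rightarrow G(b,a,\alpha)=0$, so a zero of $G$ on $E_\alpha$ would solve the symmetric system $h=G(a,b,\alpha)=G(b,a,\alpha)=0$. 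Passing to $p=a+b$, $q=ab$ — on $E_\alpha$ one has $q=(2-\alpha)p/(4-\alpha p)$, and using $a^2=pa-q$ one checks that $P_1=(\alpha p-4)a+(2-\alpha)(1+q)$ is linear in $a$ — this system collapses to two cases: either $a=b$, where the only off-origin diagonal point of $E_\alpha$ is $a=b=(2-\alpha)/\alpha$ and an explicit evaluation gives $G=16(2-\alpha)^2(\alpha-1)^2/\alpha^3>0$; or a single polynomial equation in $p$ with parameter $\alpha$, whose only root producing $(a,b)\in(-1,1)^2$ is $p=0$, namely $(0,0)=K_3$.

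I expect the main obstacle to be exactly this final one-variable elimination: controlling that $p$-polynomial — equivalently $\mathrm{Res}_b(h,G)$ after imposing the $E_\alpha$-constraint — and ruling out spurious roots for all $\alpha\in(1,2)$ in the admissible range of $p$. The key enablers are the reduction to $a\geq b$ by the $\xi_1\leftrightarrow\xi_2$ symmetry, the diagonal/off-diagonal split, and the linearity $P_1=(\alpha p-4)a+(2-\alpha)(1+q)$, which together keep the polynomial degrees small enough for an explicit factorization and sign analysis.
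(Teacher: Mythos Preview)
Your reduction is exactly the paper's: apply \Cref{higher order} with $m=2$ to trade $\partial_y^3 w(\xi)=0$ for $k_2(\xi)\cdot\nabla_\xi H(\xi,\alpha)=0$, strip off the nonvanishing factor $\tilde h$, write the kernel direction from the entries of \eqref{hessian}, and pass to $(a,b)$-coordinates. The paper arrives at the identical scalar equation (your $G(a,b,\alpha)=P_1 h_b-(2-\alpha)(1-a^2)h_a$ is precisely the bracket in the paper's equation \eqref{singularity}), and likewise treats the axis case $\sin\xi_2=0$ separately.

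The only genuine difference is the endgame for the algebra. The paper observes that, modulo the $\sin\xi_2$ factor, $G(a,b,\alpha)=0$ is a cubic in $b$ (for fixed $a,\alpha$) and simply solves it, then checks that the intersection with $h(a,b,\alpha)=0$ collapses to $(0,0)$; no symmetric-function machinery is used. Your route---using $P_1P_2=Q^2$ on $E_\alpha$ to obtain $P_2\,G(a,b,\alpha)=-(2-\alpha)(1-a^2)\,G(b,a,\alpha)$, symmetrizing to the system $h=G(a,b,\alpha)=G(b,a,\alpha)=0$, and then eliminating via $p=a+b,\ q=ab$---is correct and conceptually appealing, but it is strictly more work than the paper's direct cubic solution: you still face the one-variable elimination you flag as the main obstacle, whereas the paper's approach never needs it. In short, your plan is valid and matches the paper up to the point where the algebraic verification begins; for that last step the paper's brute-force cubic is shorter.
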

\begin{proof}
For $\xi\in E_\alpha$, since $\nabla_\xi H(\xi,\alpha)= \Tilde{h}(\xi) \nabla_\xi h (\xi,\alpha)$ where $\Tilde{h}(\xi) \neq 0$, it suffices to show $v \cdot \nabla_\xi h(\xi,\alpha) = 0$ implies $\xi \in K_3$ where $v$ is any scalar multiple of $k_2(\xi)$. From \eqref{hessian}, let
\begin{equation}\label{eigenvector}
v = \begin{pmatrix}
-(2-\alpha)\sin \xi_1 \sin \xi_2\\
\alpha\cos^2 \xi_1 +2(\cos \xi_2-2)\cos \xi_1 + 2- \alpha
\end{pmatrix}
\:\text{or}\:
\begin{pmatrix}
\alpha\cos^2 \xi_2 +2(\cos \xi_1-2)\cos \xi_2 + 2- \alpha\\
-(2-\alpha)\sin \xi_1 \sin \xi_2
\end{pmatrix}
\end{equation}
Take the former; the proof for the latter is similar and thus is omitted. First assume $\sin \xi_2 \neq 0$.

In the $(a,b)$ coordinates, $\nabla_\xi h = -(\sin \xi_1 \partial_a h,\sin \xi_2 \partial_b h)$, and our task reduces to solving
\begin{equation}\label{singularity}
    \sin \xi_2\Big(-(2-\alpha)(1-a^2)\partial_a h +(\alpha a^2 + 2(b-2) a +2-\alpha)\partial_b h\Big)=0,
\end{equation}
by applying \Cref{higher order} with $m=2$. Modulo $\sin \xi_2$, \eqref{singularity} is a polynomial equation of degree $3$ in $b$ (or $a$) and therefore can be solved explicitly. The intersection of $h(a,b,\alpha)=0$ and \eqref{singularity} occurs at $(a,b)=(0,0)$.

If $\xi$ lies at the intersection of $\sin \xi_2=0$ and $h(\xi,\alpha)=0$, then it can be directly verified that the left vector of \eqref{eigenvector} is zero whereas the right vector is a scalar multiple of $\begin{pmatrix}
1\\0
\end{pmatrix}$. Then the claim can be proved similarly as before.
\end{proof}

The higher-order derivatives at critical points determine the height of the phase function.

\begin{lemma}\label{height}
For all $\xi \in M$,
\begin{equation*}
h(\Phi_{v_\xi}) =
\begin{cases} 
    1 &\mbox{if } \xi \notin E_\alpha,\\
    \frac{6}{5} & \mbox{if } \xi \in E_\alpha \setminus K_3,\\
    \frac{4}{3} &\mbox{if } \xi \in K_3.\end{cases}
\end{equation*}
\end{lemma}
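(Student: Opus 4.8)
The plan is to compute, for each of the three regimes, the Taylor expansion of the phase $\Phi_{v_\xi}(\xi') = v_\xi\cdot\xi' - w(\xi')$ at the critical point $\xi$ in the adapted coordinate system $(x,y)$ supplied by the eigenframe $\{k_1(\xi),k_2(\xi)\}$ of $D^2w(\xi)$, read off the Newton polyhedron, and verify in each case that the chosen coordinates are in fact adapted (so that the computed Newton distance equals the height). Throughout, recall that the linear part of $\Phi_{v_\xi}$ vanishes at $\xi$ by the critical point condition $v_\xi = \nabla w(\xi)$, so the Taylor support begins at order two, and that the constant term is irrelevant to the Newton polyhedron.

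For $\xi\notin E_\alpha$, $D^2w(\xi)$ is nonsingular, so $\Phi_{v_\xi}$ has a nondegenerate critical point; then $\mathcal{T}$ contains $(2,0)$ and $(0,2)$, the Newton diagram is the segment joining them, the bisectrix meets it at $(1,1)$, and $d(\Phi_{v_\xi})=1$. Since the quadratic form is nondegenerate, the coordinates are adapted (indeed Morse), giving $h(\Phi_{v_\xi})=1$. For $\xi\in E_\alpha\setminus K_3$, the diagonalizing frame gives $\partial_{xx}w(\xi)\neq 0$, $\partial_{yy}w(\xi)=\partial_{xy}w(\xi)=0$, and by \Cref{singularity2} we have $\partial_y^3 w(\xi)\neq 0$; by \Cref{higher order} with $f=w$, $d=2$, the vanishing of $\partial_y^2 w$ together with $\partial_y\det D^2w(\xi)\neq 0$ (which must be checked: $\xi$ is a regular point of $H(\cdot,\alpha)$ off $K_3$ since $\nabla_{(a,b)}h\neq 0$ there, and the degenerate direction $k_2$ is transverse to the level set $E_\alpha$ — this is precisely the fold condition) forces $\partial_y^3 w(\xi)\neq 0$, consistent with \Cref{singularity2}. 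Hence $\Phi_{v_\xi}(x,y) = c_{20}x^2 + c_{03}y^3 + (\text{higher order})$ with $c_{20},c_{03}\neq 0$, so $\mathcal{T}\ni(2,0),(0,3)$; the Newton diagram is the segment from $(2,0)$ to $(0,3)$, which crosses the bisectrix at $(6/5,6/5)$, so $d=6/5$. Since $d>1$ and $\Phi_{pr}(x,\pm 1)$ is then a polynomial of the form $c_{20}x^2 \pm c_{03}$ (the principal part retains only the two vertices and any lattice points of $\mathcal{T}$ on that edge), it has no repeated real root, so the coordinates are superadapted, hence adapted, and $h(\Phi_{v_\xi})=6/5$. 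For $\xi\in K_3 = \{(\pm\pi/2,\pm\pi/2)\}$, \Cref{singularity2} gives $\partial_y^3 w(\xi)=0$; applying \Cref{higher order} again with $m=3$ reduces the question of $\partial_y^4 w(\xi)$ to $\partial_y^2\det D^2w(\xi)$, which I would compute directly and show is nonzero (this is the cusp condition, matching the observation that $(a,b)=(0,0)$ is a cusp of the $\alpha=2$ degeneracy locus and more generally the ``at worst a cusp'' statement from \cite{borovyk2017klein}); so $\partial_y^4 w(\xi)\neq 0$, giving $\Phi_{v_\xi}(x,y) = c_{20}x^2 + c_{04}y^4 + (\text{higher order})$ with $c_{20},c_{04}\neq 0$. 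The Newton diagram joins $(2,0)$ to $(0,4)$, crosses the bisectrix at $(4/3,4/3)$, so $d=4/3$; again $d>1$ and $\Phi_{pr}(x,\pm 1) = c_{20}x^2 \pm c_{04}$ is a quadratic with distinct roots, so the coordinates are superadapted and $h(\Phi_{v_\xi})=4/3$.

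The main obstacle is the bookkeeping needed to justify that the eigenframe coordinates are genuinely adapted in the two degenerate cases, i.e.\ that no analytic change of variables can further raise the Newton distance. Here the clean sufficient condition is the superadaptedness criterion quoted at the end of \Cref{notation}: if $d(\Phi)>1$ and $\Phi_{pr}(x,\pm 1)$ is a quadratic with no repeated root, the coordinates are superadapted (hence adapted). The remaining technical point is to confirm that $\Phi_{pr}$ really is $c_{20}x^2 + c_{0k}y^k$ with no cross terms on the relevant edge — this follows because $\partial_{xy}w(\xi)=0$ kills the $(1,1)$ coefficient and, more importantly, because the edge of the Newton diagram from $(2,0)$ to $(0,k)$ contains no other points of $\mathbb{N}^2$ for $k=3$ (the segment from $(2,0)$ to $(0,3)$ passes through no interior lattice point) while for $k=4$ its only interior lattice point is $(1,2)$, whose coefficient $c_{12}$ is again controlled — one checks $\partial_x\partial_y^2 w(\xi)$ enters only as a lower-order perturbation that does not affect whether $\Phi_{pr}(x,\pm1)$ has a repeated root, or one absorbs it by a further coordinate shift. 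A secondary obstacle is simply carrying out the explicit derivative computations $\partial_y\det D^2w(\xi)\neq 0$ on $E_\alpha\setminus K_3$ and $\partial_y^2\det D^2w(\xi)\neq 0$ on $K_3$; both are finite polynomial computations in $a=\cos\xi_1, b=\cos\xi_2$ after the substitution, closely paralleling \cite{borovyk2017klein}, and I expect them to go through without surprises.
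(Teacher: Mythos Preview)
Your treatment of the nondegenerate case and of $\xi\in E_\alpha\setminus K_3$ is correct and essentially matches the paper. The genuine gap is in the cusp case $\xi\in K_3$.

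Your claim that $\partial_y^4 w(\xi)\neq 0$ is false. In the eigenframe at $\xi=(\pi/2,\pi/2)$ one has $\xi_1=\frac{\pi}{2}+\frac{x-y}{\sqrt 2}$, $\xi_2=\frac{\pi}{2}+\frac{x+y}{\sqrt 2}$, and a one-line computation gives $\sin^2(\xi_1/2)+\sin^2(\xi_2/2)\big|_{x=0}\equiv 1$, hence $w(0,y)\equiv 1$ and $\partial_y^j w(\xi)=0$ for \emph{every} $j\geq 1$. Consequently the computation you deferred, $\partial_y^2\det D^2w(\xi)\neq 0$, does not go through ``without surprises''; by \Cref{higher order} it must in fact vanish, and so do all higher $\partial_y^k H$. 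There is no $(0,4)$ vertex. The vertex that actually terminates the Newton diagram is $(1,2)$, coming from $\partial_{xyy}w(\xi)=-\frac{\alpha}{4\sqrt 2}\neq 0$; the diagram is the segment $\{2x+y=4:\ 1\leq x\leq 2\}$ (with an unbounded vertical face at $x=1$), and the principal part is $\Phi_{v_\xi,pr}=Ax^2+Bxy^2$ with $A=\frac{\alpha(2-\alpha)}{16}$, $B=\frac{\alpha}{8\sqrt 2}$.

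By a coincidence of arithmetic your distance $4/3$ survives, since $(1,2)$ and your putative $(0,4)$ lie on the same line $2x+y=4$, and the bisectrix meets that line at $(4/3,4/3)$ regardless. But your superadaptedness check must be redone: the correct $\Phi_{pr}(x,\pm 1)=Ax^2+Bx$ has the two simple roots $0$ and $-B/A$, not the form $c_{20}x^2\pm c_{04}$ you wrote. In particular the $xy^2$ coefficient you proposed to treat as a ``lower-order perturbation'' or to absorb by a coordinate shift is not a perturbation at all---it is the load-bearing term, and without it the principal part would degenerate to $Ax^2$ with a double root at $0$. This is exactly how the paper (and \cite{borovyk2017klein}) handles the cusp.
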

\begin{proof}
Let $\xi \notin E_\alpha$. In any given coordinate system, say $(\Tilde{x},\Tilde{y})$, if $\partial_{\Tilde{x}\Tilde{y}}w(\xi) \neq 0$, then $d_{(\Tilde{x},\Tilde{y})}=1$. If $\partial_{\Tilde{x}\Tilde{y}}w(\xi) = 0$, then both $\partial_{\Tilde{x}}^2 w(\xi),\partial_{\Tilde{y}}^2 w(\xi) \neq 0$ due to non-degeneracy. In either case, $d_{(\Tilde{x},\Tilde{y})}=1$. Taking supremum over all such coordinate systems, the first claim has been shown. The rest follows similarly as \cite[Lemma 3.1]{borovyk2017klein} using \Cref{singularity2}. In particular, the Newton diagrams for $\xi \in E_\alpha \setminus K_3$ and $\xi \in K_3$ are given by
\begin{equation*}
\begin{split}
\mathcal{N}_d &= \{3x+2y=6: 0 \leq x \leq 2\},\\
\mathcal{N}_d &= \{2x+y=4: 1 \leq x \leq 2\},
\end{split}
\end{equation*}
respectively.
\end{proof}

The computation of heights depends only on the non-zero Taylor coefficients of $\Phi_v$, and therefore does not reflect the variations on $\alpha$. However the leading terms of asymptotics \eqref{asymptotics} depend on $\alpha$. Define
\begin{equation*}
I_{\Phi_v,\zeta}(\epsilon) = \int_{\{0<\Phi_v<\epsilon\}} \zeta(\xi)d\xi.
\end{equation*}
As in \eqref{asymptotics}, $I$ has an asymptotic expansion as $\epsilon \rightarrow 0+$,
\begin{equation*}
I_{\Phi_v,\zeta} \sim \sum_{j=0}^\infty (c_j(\zeta)+c_j^\prime(\zeta)\log \epsilon)\epsilon^{r_j},\:I_{-\Phi_v,\zeta} \sim \sum_{j=0}^\infty (C_j(\zeta)+C_j^\prime(\zeta)\log \epsilon)\epsilon^{r_j}, 
\end{equation*}
where $\{r_j\}$ is an increasing arithmetic sequence of positive rational numbers such that the (minimal) $r_0$ is determined only by the phase function that renders at least one of $c_0,c_0^\prime,C_0,C_0^\prime$ non-zero. For $0 \leq m \leq \infty$, let $-\frac{1}{m}$ be the slope of the subset of $\mathcal{N}_d$ that the bisectrix intersects. Define $\Phi_{v_\xi,pr}^+(x,\pm 1) = \Phi_{v_\xi,pr}(x,\pm 1)$ if $\Phi_{v_\xi,pr}(x,\pm 1)>0$ and zero otherwise. A summary of \cite[Theorem 1.1,1.2]{greenblatt2009asymptotic} that applies to our case is given. 
\begin{lemma}\label{greenblatt}
Let $\xi \in E_\alpha$. Then $\Phi_{v_\xi}=\Phi_{v_\xi}(x,y)$ in the coordinate system defined by $\{k_1(\xi),k_2(\xi)\}$ is superadapted. The slowest decay of the asymptotics is given by $\sigma_0 = r_0 = \frac{1}{h(\Phi_{v_\xi})}$. The leading terms have vanishing logarithmic terms, i.e., $c_0^\prime = C_0^\prime = 0$, and moreover
\begin{equation}\label{asymptotics6}
\begin{split}
c_0 &= \lim_{\epsilon \rightarrow 0} \frac{I_{\Phi_{v_\xi},\zeta}(\epsilon)}{\epsilon^{\sigma_0}} = \frac{\zeta(0,0)}{m+1} \int_\mathbb{R} \Phi_{v_\xi,pr}^+ (x,1)^{-\sigma_0} + \Phi_{v_\xi,pr}^+ (x,-1)^{-\sigma_0} dx\\
d_0 &=\lim_{\tau \rightarrow \infty} \frac{J_{\Phi_{v_\xi},\zeta}(\tau)}{\tau^{-\sigma_0}}=  \sigma_0 \Gamma(\sigma_0)\Big(e^{i\frac{\pi \sigma_0}{2}}c_0 + e^{-i\frac{\pi \sigma_0}{2 }}C_0\Big),    
\end{split}
\end{equation}
where $C_0$ is computed similarly by replacing $\Phi_{v_\xi}$ by its negative.
\end{lemma}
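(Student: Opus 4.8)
The plan is to verify, for $\xi\in E_\alpha$, the hypotheses of Theorems~1.1 and~1.2 of \cite{greenblatt2009asymptotic} in the frame $\{k_1(\xi),k_2(\xi)\}$, and then to quote their conclusions; all of $\sigma_0$, the vanishing of the logarithmic terms, and the formulas for $c_0,d_0$ are immediate once superadaptedness is established, so the crux is superadaptedness itself. Recall that the frame diagonalizes $D^2w(\xi)$ with $\partial_{yy}w(\xi)=\partial_{xy}w(\xi)=0$ and $\partial_{xx}w(\xi)\neq 0$ (the Hessian has rank one but is nonzero on $M$). Hence in the $(x,y)$ coordinates the Taylor expansion of $\Phi_{v_\xi}$ at its critical point $\xi$ begins with $-\tfrac12\partial_{xx}w(\xi)\,x^2$ and contains no $y^2$ or $xy$ term; by \Cref{singularity2} the lowest pure power of $y$ is $y^3$ with nonzero coefficient when $\xi\in E_\alpha\setminus K_3$, whereas on $K_3$ one has $\partial_y^3w(\xi)=0$ and $\partial_y^4w(\xi)\neq 0$, the latter by \Cref{higher order} with $m=3$ together with a direct evaluation of $\partial_y^2\det D^2w$ at $(\pm\tfrac\pi2,\pm\tfrac\pi2)$. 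Feeding this Taylor data into the Newton polyhedron construction reproduces the diagrams of \Cref{height}. On $E_\alpha\setminus K_3$ the principal part is $\Phi_{v_\xi,pr}(x,y)=-\tfrac12\partial_{xx}w(\xi)\,x^2-\tfrac16\partial_y^3w(\xi)\,y^3$, so $\Phi_{v_\xi,pr}(x,\pm1)=-\tfrac12\partial_{xx}w(\xi)\,x^2\mp\tfrac16\partial_y^3w(\xi)$ is a quadratic in $x$ with no repeated root (its constant term is nonzero); since $d(\Phi_{v_\xi})=\tfrac65>1$, the sufficient condition recorded after the definition of superadapted coordinates applies. On $K_3$, $\Phi_{v_\xi,pr}(x,\pm1)$ is a polynomial of degree at most two whose roots different from $x=0$ are simple, as one reads off from the low-order Taylor coefficients of $w$ in the degenerate frame; with $d(\Phi_{v_\xi})=\tfrac43>1$, superadaptedness again follows.

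With the coordinate system superadapted, Theorem~1.1 of \cite{greenblatt2009asymptotic} (equivalently Varchenko's theorem) identifies the leading exponent of the sublevel-set integrals $I_{\pm\Phi_{v_\xi},\zeta}$ and the oscillatory index of $J_{\Phi_{v_\xi},\zeta}$ as $\sigma_0=r_0=1/d_x(\Phi_{v_\xi})=1/h(\Phi_{v_\xi})$, which by \Cref{height} equals $\tfrac56$ on $E_\alpha\setminus K_3$ and $\tfrac34$ on $K_3$; these match the decay rates in \Cref{conjecture}. The same theorem shows the logarithmic factor in the leading sublevel-set coefficient is absent precisely when the bisectrix meets the relative interior of a compact edge of the Newton diagram rather than a vertex: here $\{x=y\}$ meets $\{3x+2y=6\}$ at $(\tfrac65,\tfrac65)$, strictly between $(0,3)$ and $(2,0)$, and meets $\{2x+y=4\}$ at $(\tfrac43,\tfrac43)$, strictly between $(1,2)$ and $(2,0)$, so $c_0'=C_0'=0$. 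Theorem~1.1 also furnishes the explicit value: with $-1/m$ the slope of the principal edge (so $m=\tfrac23$ on $E_\alpha\setminus K_3$ and $m=\tfrac12$ on $K_3$), $c_0=\tfrac{\zeta(0,0)}{m+1}\int_{\mathbb{R}}\big(\Phi_{v_\xi,pr}^+(x,1)^{-\sigma_0}+\Phi_{v_\xi,pr}^+(x,-1)^{-\sigma_0}\big)\,dx$, the integral converging since $\sigma_0<1$ makes the integrand — bounded by $|x-x_0|^{-\sigma_0}$ near the simple roots of $\Phi_{v_\xi,pr}(x,\pm1)$ and by $|x|^{-2\sigma_0}$ for $|x|$ large when $\Phi_{v_\xi,pr}^+(x,\pm1)$ is not identically zero — Lebesgue integrable; $C_0$ is the same expression with $\Phi_{v_\xi}$ replaced by $-\Phi_{v_\xi}$.

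Finally, the passage from sublevel-set to oscillatory-integral asymptotics is the transfer in Theorem~1.2 of \cite{greenblatt2009asymptotic}: pairing $e^{i\tau\Phi_{v_\xi}}$ with the distributional derivative of the sublevel-set distribution function and applying $\int_0^\infty e^{i\tau s}\,d(s^{\sigma_0})=\sigma_0\Gamma(\sigma_0)\,e^{i\pi\sigma_0/2}\tau^{-\sigma_0}$ on $\{\Phi_{v_\xi}>0\}$ together with its complex conjugate on $\{\Phi_{v_\xi}<0\}$, the remaining terms of the expansion being $o(\tau^{-\sigma_0})$ because $r_1>r_0$ and no logarithmic terms compete; this yields $d_0=\sigma_0\Gamma(\sigma_0)\big(e^{i\pi\sigma_0/2}c_0+e^{-i\pi\sigma_0/2}C_0\big)$, as claimed. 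The main obstacle I anticipate is precisely the superadaptedness check at the four cusp points of $K_3$: there one cannot avoid an explicit evaluation of several low-order Taylor coefficients of $w$ in the degenerate frame in order to pin down the Newton diagram and confirm the nondegeneracy of $\Phi_{v_\xi,pr}(x,\pm1)$. The fold case $E_\alpha\setminus K_3$ and every citation-level step are routine.
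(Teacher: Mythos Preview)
Your overall strategy matches the paper's: verify superadaptedness in the $\{k_1,k_2\}$ frame and then invoke \cite[Theorems 1.1, 1.2]{greenblatt2009asymptotic}. The fold case $\xi\in E_\alpha\setminus K_3$ is handled correctly, and the transfer from sublevel-set to oscillatory asymptotics is fine.

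However, there is a genuine error in the cusp case. You claim that $\partial_y^4 w(\xi)\neq 0$ on $K_3$, to be established via \Cref{higher order} with $m=3$ and a direct evaluation of $\partial_y^2\det D^2w$. In fact the opposite is true: at $\xi=(\tfrac{\pi}{2},\tfrac{\pi}{2})$, in the coordinates $\xi_1=\tfrac{\pi}{2}+\tfrac{x-y}{\sqrt{2}}$, $\xi_2=\tfrac{\pi}{2}+\tfrac{x+y}{\sqrt{2}}$, one computes
\[
w(0,y)=\Big\{\sin^2\Big(\tfrac{\pi}{4}-\tfrac{y}{2\sqrt{2}}\Big)+\sin^2\Big(\tfrac{\pi}{4}+\tfrac{y}{2\sqrt{2}}\Big)\Big\}^{\alpha/2}=\{1-\cos\tfrac{\pi}{2}\cos\tfrac{y}{\sqrt{2}}\}^{\alpha/2}\equiv 1,
\]
so $\partial_y^j w(\xi)=0$ for \emph{every} $j\geq 1$. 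Consequently $(0,4)$ is not in the Taylor support; the Newton diagram at $K_3$ is the segment from $(2,0)$ to $(1,2)$ (not to $(0,4)$), carried by the nonvanishing of $\partial_{xx}w$ and $\partial_{xyy}w$. The principal part is $\Phi_{v_\xi,pr}(x,y)=Ax^2+Bxy^2$ with $A=\tfrac{\alpha(2-\alpha)}{16}$, $B=\tfrac{\alpha}{8\sqrt{2}}$, so $\Phi_{v_\xi,pr}(x,\pm 1)=Ax^2+Bx$ has the two simple real roots $\{0,-B/A\}$; since the root at $x=0$ is permitted in the definition of superadapted and $d=\tfrac{4}{3}>1$, superadaptedness follows. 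This is exactly what the paper carries out in its \Cref{cusp}. Your identification of the ``main obstacle'' was correct, but the resolution you sketched would have led you to the wrong Newton diagram and a principal part with a spurious $y^4$ term; the argument needs the vanishing of all pure $y$-derivatives, not the nonvanishing of the fourth one.
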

Since $\partial_y^3 w(P)=0$ for all $P\in K_3$ (see \Cref{singularity2}), the decay of $J$ is the slowest on $K_3$. Recalling that $K_3 \subseteq \bigcap\limits_{\alpha\in (1,2)}E_\alpha$, it is of interest to determine $d_0(\alpha)$ on $K_3$.

\begin{lemma}\label{cusp}
Let $\xi \in K_3$ and let $\zeta \in C^\infty_c$ be supported in a small neighborhood around $\xi$ in which $\xi$ is the unique critical point of $\Phi_{v_\xi}$. Then 
\begin{equation*}
    d_0(\alpha) =c\cdot \zeta(\xi)\alpha^{-\frac{3}{4}}(2-\alpha)^{-\frac{1}{4}},
\end{equation*}
where $c \in \mathbb{C}\setminus \{0\}$ is independent of $\zeta$ and $\alpha$.
\end{lemma}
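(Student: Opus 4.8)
The plan is to compute the constant $d_0(\alpha)$ at a point $\xi \in K_3$ by invoking \Cref{greenblatt}, which reduces the problem to an explicit one-dimensional integral of the principal part of the phase in a superadapted coordinate system. First I would fix $\xi = (\frac{\pi}{2},\frac{\pi}{2})$ (the other three points of $K_3$ being equivalent by the symmetries $\xi_i \mapsto \pm \xi_i$ and $(\xi_1,\xi_2) \mapsto (\xi_2,\xi_1)$) and set $v_\xi = \nabla w(\xi)$. By \Cref{height}, $h(\Phi_{v_\xi}) = \frac{4}{3}$ with Newton diagram $\{2x + y = 4 : 1 \le x \le 2\}$, so the bisectrix $x=y$ meets $\mathcal{N}_d$ on the segment of slope $-2$, giving $m = \frac12$ and $\sigma_0 = r_0 = \frac{3}{4}$. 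I would then carry out the Taylor expansion of $w$ about $\xi$ in the diagonalizing frame $\{k_1(\xi),k_2(\xi)\}$ from \Cref{higher order} (note at this point $k_1,k_2$ are, by the $\xi_1 \leftrightarrow \xi_2$ symmetry of the Hessian at $(\frac{\pi}{2},\frac{\pi}{2})$, the $\pm 45^\circ$ directions $\frac{1}{\sqrt2}(1,\pm1)$), keeping track of how each surviving coefficient depends on $\alpha$ through the overall factor $w(\xi)^{-(2/\alpha-1)}$ and the entries of $D^2w$, $D^3w$, $D^4w$. Since $\partial_y^2 w(\xi) = \partial_{xy} w(\xi) = \partial_y^3 w(\xi) = 0$ (the first two by diagonalization and the fact that $\xi\in E_\alpha$, the third by \Cref{singularity2}) while $\partial_{xx} w(\xi) \ne 0$ and $\partial_y^4 w(\xi) \ne 0$, the principal part is of the model form $\Phi_{v_\xi,pr}(x,y) = A x^2 + B y^4$ (any $x^2 y$, $x^3$, $xy^2$, $xy^3$ cross terms lie strictly above $\mathcal{N}_d$ and the pure $y^4$ and $x^2$ terms are the vertices), where $A = \frac12 \partial_{xx}w(\xi)$ and $B = \frac{1}{24}\partial_y^4 w(\xi)$ are the quantities I must compute explicitly as functions of $\alpha$. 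Because $\Phi_{v_\xi,pr}(x,\pm 1) = Ax^2 + B$ is a quadratic in $x$ with no repeated root (as $A \ne 0 \ne B$), the stated sufficient condition at the end of \Cref{notation} shows the coordinate system is superadapted, confirming the hypothesis of \Cref{greenblatt}.

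Next I would evaluate the integral in \eqref{asymptotics6}. With $\sigma_0 = \frac34$ and $m = \frac12$,
\begin{equation*}
c_0 = \frac{\zeta(\xi)}{m+1}\int_{\mathbb{R}} \Phi^+_{v_\xi,pr}(x,1)^{-3/4} + \Phi^+_{v_\xi,pr}(x,-1)^{-3/4}\, dx = \frac{2\zeta(\xi)}{3}\int_{\{Ax^2+B>0\}} (Ax^2+B)^{-3/4}\, dx,
\end{equation*}
and similarly $C_0$ is the same integral with $(A,B)$ replaced by $(-A,-B)$ and $\zeta(\xi)$ retained, so exactly one of $c_0, C_0$ is non-zero depending on the sign of $AB$ (the phase $-w$ enters with a sign, and I will need to check which of $A,B$ are positive for $\alpha \in (1,2)$; a quick sign analysis of $\partial_{xx}w$ and $\partial_y^4 w$ at $(\frac\pi2,\frac\pi2)$ settles this, and in fact the two have the same sign so only one summand survives). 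The surviving integral is a Beta-function integral: substituting $x = \sqrt{B/A}\, s$ (when $A, B > 0$) gives $B^{-3/4} (B/A)^{1/2} \int_{\mathbb{R}} (s^2+1)^{-3/4} ds = A^{-1/2} B^{-1/4} \cdot \sqrt{\pi}\,\Gamma(1/4)/\Gamma(3/4)$, a nonzero numerical constant times $A^{-1/2}B^{-1/4}$. Feeding this into the second line of \eqref{asymptotics6}, $d_0 = \sigma_0 \Gamma(\sigma_0)\big(e^{i\pi\sigma_0/2} c_0 + e^{-i\pi\sigma_0/2}C_0\big)$ collapses to a nonzero complex constant (absorbing $\Gamma(3/4)$, $e^{\pm 3\pi i/8}$, the Beta-integral value, and $\frac{2}{3}$) times $\zeta(\xi)\, A^{-1/2} B^{-1/4}$.

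It remains to extract the $\alpha$-dependence of $A^{-1/2}B^{-1/4}$, which is the heart of the computation. I would compute $\partial_{xx}w(\xi)$ from \eqref{hessian}: at $\xi = (\frac\pi2,\frac\pi2)$ one has $\cos\xi_i = 0$, $\sin\xi_i = 1$, $w(\xi) = (\sin^2\frac\pi4 + \sin^2\frac\pi4)^{\alpha/2} = (2\cdot\frac12)^{\alpha/2} = 1$, so $D^2w(\xi) = -\frac{\alpha}{16}\begin{pmatrix} 2-\alpha & 2-\alpha \\ 2-\alpha & 2-\alpha\end{pmatrix}$, whence the nonzero eigenvalue (in the $k_1 = \frac1{\sqrt2}(1,1)$ direction) is $\partial_{xx}w(\xi) = -\frac{\alpha(2-\alpha)}{8}$, giving $A = -\frac{\alpha(2-\alpha)}{16}$. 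For $B$ I need $\partial_y^4 w(\xi)$ along $k_2 = \frac1{\sqrt2}(1,-1)$, which requires differentiating $w = (\sin^2\frac{\xi_1}{2} + \sin^2\frac{\xi_2}{2})^{\alpha/2}$ four times in the direction $(1,-1)/\sqrt2$ and evaluating at $(\frac\pi2,\frac\pi2)$; I expect the chain/product rule to produce a single surviving term proportional to $\alpha \cdot (\text{nonzero rational})$ with no extra $(2-\alpha)$ or $\alpha$-polynomial factor beyond an overall $\alpha$ (the $w^{\alpha/2}$ prefactor contributes derivatives $\sim \alpha(\alpha-2)\cdots$, but the lower-order terms vanish because $\partial_y^2 w = \partial_y^3 w = 0$ at $\xi$, so only the term where all four derivatives hit the inner function survives, carrying a clean factor of $\frac{\alpha}{2}$). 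Thus $B = \frac{1}{24}\partial_y^4 w(\xi) = c' \alpha$ for a nonzero constant $c'$, and therefore $A^{-1/2}B^{-1/4} = \big(\frac{\alpha(2-\alpha)}{16}\big)^{-1/2}(c'\alpha)^{-1/4} = (\text{const})\cdot \alpha^{-3/4}(2-\alpha)^{-1/2}$. \emph{Wait} — the claimed answer is $\alpha^{-3/4}(2-\alpha)^{-1/4}$, so I expect the correct bookkeeping to give $A \sim \alpha(2-\alpha)$ entering with power $-\frac14$ after all, not $-\frac12$; this is because the superadapted normal form is $\Phi_{pr} = A x^2 + B y^4$ and the Beta integral substitution scales as $A^{-1/2}$ only if the $x^2$ coefficient is $A$ with the bisectrix slope $m$ fixed — I should instead recheck that the relevant vertex contributing the $\alpha$-singular behavior is the $y^4$ vertex and that $\int (Ax^2+B)^{-3/4}dx$ scaling is $A^{-1/2}B^{1/2-3/4} = A^{-1/2}B^{-1/4}$, so with $A \sim \alpha(2-\alpha)$ and $B \sim \alpha$ one gets $\alpha^{-1/2}(2-\alpha)^{-1/2}\cdot\alpha^{-1/4} = \alpha^{-3/4}(2-\alpha)^{-1/2}$. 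The discrepancy with the stated $(2-\alpha)^{-1/4}$ tells me the Hessian eigenvalue should actually be computed in the \emph{normalized} frame and that the relevant normal-form coefficient after the coordinate change absorbing the $w^{-(2/\alpha-1)}$ Jacobian is $A \sim \sqrt{\alpha(2-\alpha)}$ or that $B$ carries an extra $(2-\alpha)$; the main obstacle is precisely this careful tracking of constants through the diagonalization and Taylor expansion, and I would resolve it by brute-force symbolic computation of $D^2w, D^4w$ at $(\frac\pi2,\frac\pi2)$ in the $\pm 45^\circ$ frame, confirming that the net power of $(2-\alpha)$ is $-\frac14$ and of $\alpha$ is $-\frac34$, with all purely numerical factors — $\Gamma(3/4)$, $e^{\pm 3\pi i/8}$, $\sqrt\pi\,\Gamma(1/4)/\Gamma(3/4)$, powers of $2$ — collected into the single constant $c$, and noting that $c \ne 0$ because it is a product of manifestly nonzero factors and (since only one of $c_0, C_0$ survives) there is no cancellation in $e^{i\pi\sigma_0/2}c_0 + e^{-i\pi\sigma_0/2}C_0$.
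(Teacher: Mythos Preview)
Your overall strategy is exactly the paper's, but the central computation contains a genuine error that is precisely the source of the $(2-\alpha)$-power discrepancy you flag at the end. The normal form is \emph{not} $Ax^2 + By^4$; it is $Ax^2 + Bxy^2$.

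Here is where the argument goes wrong. You correctly cite from \Cref{height} that the Newton diagram at a cusp point is $\{2x+y=4: 1\le x\le 2\}$. But notice that the vertices of this segment are $(2,0)$ and $(1,2)$, not $(2,0)$ and $(0,4)$. The vertex $(1,2)$ corresponds to the monomial $xy^2$, i.e.\ to the coefficient $\partial_{xyy}w(\xi)$, which is nonzero. In fact $\partial_y^j w(\xi)=0$ for \emph{all} $j\ge 2$ at $\xi=(\tfrac{\pi}{2},\tfrac{\pi}{2})$ (not just $j=2,3$), so $(0,4)$ does not belong to the Taylor support at all; your assertion that $\partial_y^4 w(\xi)\neq 0$ is false, and your claim that ``$xy^2$ lies strictly above $\mathcal N_d$'' contradicts the very diagram you quoted. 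A direct computation in the rotated frame $k_1=\tfrac{1}{\sqrt2}(1,1),\ k_2=\tfrac{1}{\sqrt2}(-1,1)$ gives $\partial_{xx}w(\xi)=-\tfrac{\alpha}{8}(2-\alpha)$ and $\partial_{xyy}w(\xi)=-\tfrac{\alpha}{4\sqrt2}$, so
\[
\Phi_{v_\xi,pr}(x,y)=-w_{pr}(x,y)=\frac{\alpha(2-\alpha)}{16}\,x^2+\frac{\alpha}{8\sqrt2}\,xy^2=:Ax^2+Bxy^2.
\]
Then $\Phi_{v_\xi,pr}(x,\pm1)=Ax^2+Bx$ has simple roots $\{0,-B/A\}$, confirming superadaptedness, and the Beta-type integral you set up becomes $\int (Ax^2+Bx)^{-3/4}dx$ over the positivity set. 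Rescaling $x\mapsto (B/A)s$ gives a numerical constant times $A^{-1/4}B^{-1/2}$, not $A^{-1/2}B^{-1/4}$. With $A\sim\alpha(2-\alpha)$ and $B\sim\alpha$ this yields exactly $\alpha^{-3/4}(2-\alpha)^{-1/4}$, resolving your discrepancy without any Jacobian gymnastics.

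A secondary point: since $Ax^2+Bx$ changes sign on $\mathbb{R}$, both $c_0$ and $C_0$ are nonzero here, so your remark that ``only one of $c_0,C_0$ survives'' is also incorrect; one must check there is no cancellation in $e^{i\pi\sigma_0/2}c_0+e^{-i\pi\sigma_0/2}C_0$, which follows since $c_0,C_0>0$ and $\sigma_0=\tfrac34\notin\mathbb{Z}$.
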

\begin{proof}
Without loss of generality, let $\xi = (\frac{\pi}{2},\frac{\pi}{2})$. Define $k_1 = \frac{e_1(\xi)+e_2(\xi)}{\sqrt{2}},\: k_2 = \frac{-e_1(\xi)+e_2(\xi)}{\sqrt{2}}$ where the linear span of $\{k_1,k_2\}$ is coordinatized in $(x,y)$. Using $(\xi_1,\xi_2)$ as a coordinate for $\{e_1(0),e_2(0)\}$, we have
\begin{equation*}
    \xi_1 = \frac{\pi}{2} + \frac{x-y}{\sqrt{2}},\: \xi_2 = \frac{\pi}{2} + \frac{x+y}{\sqrt{2}}.
\end{equation*}
We first claim $(x,y)$ defines a superadapted system for $\Phi_{v_\xi}$ at $(x,y)=(0,0)$. Note that $\Phi_{v_\xi,pr} = -w_{pr}$ and
\begin{equation*}
w(x,y) = \Big\{\sin^2\Big(\frac{1}{2}(\frac{\pi}{2}+\frac{x-y}{\sqrt{2}})\Big)+\sin^2\Big(\frac{1}{2}(\frac{\pi}{2}+\frac{x+y}{\sqrt{2}})\Big)\Big\}^{\frac{\alpha}{2}}.
\end{equation*}
The second order derivatives are
\begin{equation}\label{second}
\partial_{xx}w(\xi)=-\frac{\alpha}{8}(2-\alpha),\: \partial_{xy}w(\xi)=\partial_{yy}w(\xi)=0,
\end{equation}
and the third order derivatives are
\begin{equation*}
\partial_{yyy}w(\xi) = \partial_{xxy}w(\xi)=0,\: \partial_{xyy}w(\xi)=-\frac{\alpha}{4\sqrt{2}},\: \partial_{xxx}w(\xi)= \frac{\alpha(\alpha^2-6\alpha+4)}{16\sqrt{2}}.
\end{equation*}
By direct computation, it can be verified that $\partial_y^j w(\xi)=0$ for all $j \geq 2$. These derivatives determine the Newton polyhedron, Newton diagram, and $w_{pr}$ given by
\begin{equation}\label{principal}
\begin{split}
\mathcal{N} &= \{2x+y \geq 4,\: x \geq 1,\: y \geq 0\},\: \mathcal{N}_d = \{2x+y=4,\: 1 \leq x \leq 2\},\\
\Phi_{v_\xi,pr} &= -w_{pr}(x,y)= \frac{\alpha(2-\alpha)}{16}x^2 + \frac{\alpha}{8\sqrt{2}}xy^2 =: Ax^2 + B xy^2.
\end{split}
\end{equation}
The bisectrix intersects $\mathcal{N}_d$ at $x=y=\frac{4}{3}=d(\Phi_{v_\xi})>1$. Since $\Phi_{v_\xi,pr}(x,\pm 1) = Ax^2+Bx$ has two real roots, $\{0,-\frac{B}{A}\}$, our first claim has been shown. It follows immediately from \Cref{greenblatt} that $r_0 = \sigma_0 = \frac{1}{d(\Phi_{v_\xi})} = \frac{3}{4}$. Moreover $c_0^\prime = C_0^\prime = 0$ and
\begin{equation}\label{asymptotics4}
\begin{split}
c_0 &= \frac{2}{3}\zeta(0,0) \int_\mathbb{R} \Phi_{v_\xi,pr}^+ (x,1)^{-\frac{3}{4}} + \Phi_{v_\xi,pr}^+ (x,-1)^{-\frac{3}{4}} dx\\
&= \frac{4}{3}\zeta(0,0)\Big(\int_{-\infty}^{-\frac{B}{A}}(Ax^2+Bx)^{-\frac{3}{4}} dx+\int_{0}^{\infty}(Ax^2+Bx)^{-\frac{3}{4}} dx\Big)= \frac{2^{\frac{27}{4}}\pi^{\frac{1}{2}}\Gamma(\frac{1}{4})}{3 \Gamma(\frac{3}{4})}\zeta(0,0)\alpha^{-\frac{3}{4}}(2-\alpha)^{-\frac{1}{4}},   
\end{split}
\end{equation}
by using \eqref{principal}; the computation of $C_0$, which amounts to replacing $\Phi_{v_\xi}$ by $-\Phi_{v_\xi}$, is similar and thus is omitted. The conclusion of lemma follows immediately from the explicit form of $d_0$ in \eqref{asymptotics6} and \eqref{asymptotics4}.
\end{proof}
\begin{remark}\label{classical}
If $\alpha=2$, $\partial_{xx}w(\xi)=0$ by \eqref{second}. Consequently, the quadratic term of $\Phi_{v_\xi,pr}$ vanishes and the Newton diagram is given by
\begin{equation*}
    \mathcal{N}_d = \{x+y=3,\: 1 \leq x \leq 3\}.
\end{equation*}
By analogy with the proof of \Cref{cusp}, it is expected that the reciprocal of the distance of this diagram, $\frac{2}{3}$, yields the sharp decay rate of the corresponding oscillatory integral. Indeed, this expectation coincides with the result obtained in \cite{stefanov2005asymptotic}.
\end{remark}
\begin{lemma}\label{fold}
For all $\xi \in E_\alpha \setminus K_3$, let $\zeta$ be as \Cref{cusp} such that its support does not intersect $K_3$. Then we have
\begin{equation}\label{asymptotics7}
    d_0 =c\cdot \zeta(\xi)|\Tilde{h}(\xi,\alpha)\partial_y h(\xi,\alpha)|^{-\frac{1}{3}}|Tr D^2w(\xi)|^{-\frac{1}{6}},
\end{equation}
where $c \in \mathbb{C}\setminus \{0\}$ is independent of $\zeta,\alpha,\xi$.
\end{lemma}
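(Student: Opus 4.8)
The plan is to mimic the structure of the proof of \Cref{cusp}, but now at a fold point $\xi \in E_\alpha \setminus K_3$, where by \Cref{singularity2} we have $\partial_y^3 w(\xi) \neq 0$ (and $\partial_y^2 w(\xi) = 0$ by the diagonalizing choice of frame $\{k_1(\xi),k_2(\xi)\}$). First I would fix the adapted coordinate system $(x,y)$ given by $\{k_1(\xi),k_2(\xi)\}$, so that $\partial_{xy}w(\xi) = 0$, $\partial_{yy}w(\xi) = 0$, and $\partial_{xx}w(\xi) \neq 0$ (since $D^2 w(\xi)$ has rank one and is not the zero matrix), hence $\partial_{xx} w(\xi) = \operatorname{Tr} D^2 w(\xi)$ up to sign. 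By \Cref{height} the height here is $\tfrac{6}{5}$, so $\sigma_0 = r_0 = \tfrac{5}{6}$, and the Newton diagram is $\{3x + 2y = 6 : 0 \le x \le 2\}$, giving $m = \tfrac{2}{3}$ in \Cref{greenblatt}. The principal part is therefore $\Phi_{v_\xi,pr}(x,y) = -w_{pr}(x,y) = -\tfrac{1}{2}\partial_{xx}w(\xi)\, x^2 - \tfrac{1}{6}\partial_{yyy}w(\xi)\, y^3 =: A x^2 + B y^3$, where I must check that the mixed coefficient on the segment $3x+2y=6$ — the only candidate being $x^0 y^3$ and $x^2 y^0$ — receives no other contributions, and that the cross term $x y$-type monomials do not lie on the diagram; this follows from $\partial_{xy}w(\xi)=0$ and the rank-one structure exactly as in \Cref{cusp}.

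Next I would verify the coordinate system is superadapted: $\Phi_{v_\xi,pr}(x,\pm 1) = A x^2 \pm B$ is a quadratic polynomial in $x$ with no repeated root (its roots are $\pm\sqrt{\mp B/A}$, distinct as long as $B \neq 0$, which holds since $\partial_{yyy}w(\xi) \neq 0$), so by the criterion quoted after the definition of superadapted (with $d(\Phi_{v_\xi}) = \tfrac{6}{5} > 1$) the system is superadapted and \Cref{greenblatt} applies. Then I would compute $c_0$ and $C_0$ via \eqref{asymptotics6}: with $\sigma_0 = \tfrac{5}{6}$ and $m+1 = \tfrac{5}{3}$,
\begin{equation*}
c_0 = \frac{3}{5}\,\zeta(\xi) \int_\mathbb{R} \Phi_{v_\xi,pr}^+(x,1)^{-5/6} + \Phi_{v_\xi,pr}^+(x,-1)^{-5/6}\, dx,
\end{equation*}
and the analogous integral with $-\Phi_{v_\xi}$ for $C_0$. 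The key point is the scaling: substituting $x = |B/A|^{1/2} u$ in each integral pulls out a factor $|B/A|^{1/2} \cdot |A|^{-5/6} = |A|^{-1/3}|B|^{1/2 - 5/6} = |A|^{-1/3}|B|^{-1/3}$, wait — let me recompute: $|B/A|^{1/2}|A|^{-5/6}|B|^{-5/6}$... more carefully, writing $Ax^2 + B = A(x^2 + B/A)$ the integrand scales as $|A|^{-5/6}$ times a dimensionless integral in $u = x/|B/A|^{1/2}$ which contributes $|B/A|^{1/2}$, for a net $|A|^{-5/6}|B/A|^{1/2} = |A|^{-5/6 - 1/2}|B|^{1/2} = |A|^{-4/3}|B|^{1/2}$. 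Hmm, this does not immediately match the claimed exponents, so the precise bookkeeping of which of the four half-line integrals ($\Phi^+$ vs.\ its negative, $y = +1$ vs.\ $y=-1$) is nonempty, and the exact power of $|A|$ versus $|B|$, is where I expect to spend real effort; one should get $d_0 \propto \zeta(\xi)\,|B|^{-1/3}|A|^{-1/6}$ after combining $c_0, C_0$ with the $\sigma_0 \Gamma(\sigma_0)$ prefactor in \eqref{asymptotics6}, and then identify $|A| \simeq |\operatorname{Tr} D^2 w(\xi)|$ and $|B| = \tfrac{1}{6}|\partial_{yyy}w(\xi)|$, the latter being a nonzero multiple of $|\tilde h(\xi,\alpha)\,\partial_y h(\xi,\alpha)|$ by \Cref{higher order} applied with $m = 2$ (so that $\partial_y^3 w(\xi) = 0 \iff \partial_y \det D^2 w(\xi) = 0$, and tracking the proportionality constant from the product rule relating $\partial_y H = \tilde h\, \partial_y h$ on $E_\alpha$).

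The main obstacle, then, is not any conceptual difficulty but the careful determination of the leading coefficient: getting the exact rational exponents of $|A|$ and $|B|$ right requires knowing precisely which of $\Phi_{v_\xi,pr}(x,\pm 1)$ is positive on which half-lines (this depends on the signs of $A$ and $B$, hence on $\operatorname{sgn}\partial_{xx}w(\xi)$ and $\operatorname{sgn}\partial_{yyy}w(\xi)$, which in turn vary over $E_\alpha$), and then checking that the combination $e^{i\pi\sigma_0/2}c_0 + e^{-i\pi\sigma_0/2}C_0$ does not vanish — i.e.\ that the constant $c$ in \eqref{asymptotics7} is genuinely nonzero. For the non-vanishing I would argue that $c_0$ and $C_0$ are each real and positive (being integrals of positive functions times $\zeta(\xi)$, assuming $\zeta(\xi) > 0$) and strictly positive because at a fold point exactly one of $\Phi_{v_\xi}, -\Phi_{v_\xi}$ has principal part taking both signs so at least one region $\{0 < \pm\Phi_{v_\xi} < \epsilon\}$ has nonempty relevant cross-section; then $|e^{i\pi\sigma_0/2}c_0 + e^{-i\pi\sigma_0/2}C_0| \geq |\sin(\pi\sigma_0/2)|\,|c_0 - C_0|$ or $|\cos(\pi\sigma_0/2)|(c_0+C_0)$ is bounded below, and since $\sigma_0 = 5/6$ neither trigonometric factor vanishes, so $d_0 \neq 0$. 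Finally, $|A| \simeq |\operatorname{Tr} D^2 w(\xi)|$ because in the diagonalizing frame $\operatorname{Tr} D^2 w(\xi) = \partial_{xx}w(\xi) + \partial_{yy}w(\xi) = \partial_{xx}w(\xi) = 2A$, which is exactly proportional, completing the identification and yielding \eqref{asymptotics7} with $c$ independent of $\zeta, \alpha, \xi$.
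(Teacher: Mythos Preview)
Your plan follows the paper's proof essentially verbatim: work in the diagonalizing frame $(x,y)$, identify $\Phi_{v_\xi,pr}(x,y)=Ax^2+By^3$ with $A=-\tfrac12\partial_{xx}w(\xi)$ and $B=-\tfrac16\partial_{yyy}w(\xi)$, verify superadaptedness, and apply \Cref{greenblatt}. The paper also obtains the key identity by differentiating $H=\tilde h\,h$ directly: since $h(\xi)=\partial_{yy}w(\xi)=\partial_{xy}w(\xi)=0$, the product rule gives
\[
\tilde h(\xi,\alpha)\,\partial_y h(\xi,\alpha)=\partial_y H(\xi,\alpha)=\partial_{xx}w(\xi)\,\partial_{yyy}w(\xi),
\]
which is precisely the proportionality you intend to extract from \Cref{higher order}.

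The one concrete error is in your scaling. Writing $Ax^2+B=A(x^2+B/A)$ and substituting $u=x/|B/A|^{1/2}$, the factor $(x^2+B/A)^{-5/6}$ contributes an additional $|B/A|^{-5/6}$, so the total is
\[
|A|^{-5/6}\,|B/A|^{-5/6}\,|B/A|^{1/2}=|A|^{-1/2}|B|^{-1/3},
\]
not $|A|^{-4/3}|B|^{1/2}$ nor your guessed target $|A|^{-1/6}|B|^{-1/3}$. With the correct scaling, $c_0\propto\zeta(\xi)\,|\partial_{xx}w(\xi)|^{-1/2}|\partial_{yyy}w(\xi)|^{-1/3}$, and then the identity above together with $\operatorname{Tr}D^2w(\xi)=\partial_{xx}w(\xi)$ yields
\[
|\partial_{xx}w|^{-1/2}|\partial_{yyy}w|^{-1/3}=|\operatorname{Tr}D^2w|^{-1/2}\cdot|\tilde h\,\partial_y h|^{-1/3}|\operatorname{Tr}D^2w|^{1/3}=|\tilde h\,\partial_y h|^{-1/3}|\operatorname{Tr}D^2w|^{-1/6},
\]
matching \eqref{asymptotics7}. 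Everything else in your outline is sound.
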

\begin{proof}
By taking $\partial_y = k_2(\xi)\cdot \nabla_\xi$ on $H$,
\begin{equation*}
\partial_y H(\xi,\alpha) = \Tilde{h}(\xi,\alpha)\partial_y h(\xi,\alpha) = \partial_x^2 w(\xi) \partial_y^3w (\xi),   
\end{equation*}
by $h(\xi,\alpha), \partial_y^2 w(\xi)=0$. Since the trace of a matrix is the sum of eigenvalues, $Tr D^2w(\xi) = \partial_x^2 w(\xi)$, and therefore
\begin{equation}\label{eq1}
\partial_y^3 w(\xi) = \frac{\Tilde{h}(\xi,\alpha)\partial_y h(\xi,\alpha)}{Tr D^2w(\xi)}.    
\end{equation}
By \Cref{height}, $\partial_x^2 w(\xi),\partial_y^3 w(\xi) \neq 0$, which yields $\sigma_0 = r_0= \frac{5}{6}$ and
\begin{equation*}
\begin{split}
w_{pr}(x,y) &= \frac{\partial_x^2w(\xi)}{2}x^2 + \frac{\partial_y^3 w(\xi)}{6}y^3\\
\pm \Phi_{v_\xi,pr}(x,\pm 1) &= \pm \frac{\partial_x^2w(\xi)}{2}x^2 \pm \frac{\partial_y^3 w(\xi)}{6},    
\end{split}
\end{equation*}
and therefore the coordinate system $(x,y)$ is superadapted. By \Cref{greenblatt}, it suffices to compute $c_0$, given by
\begin{equation*}
\begin{split}
c_0 &= c \cdot \zeta(\xi) \int_\mathbb{R} \Phi_{v_\xi,pr}^+ (x,1)^{-\frac{5}{6}}+\Phi_{v_\xi,pr}^+ (x,-1)^{-\frac{5}{6}}dx\\
&= c \cdot \zeta(\xi) |\partial_x^2 w(\xi)|^{-\frac{1}{2}}|\partial_y^3 w(\xi)|^{-\frac{1}{3}}\\
&= c\cdot \zeta(\xi) |\Tilde{h}(\xi,\alpha)\partial_y h(\xi,\alpha)|^{-\frac{1}{3}}|Tr D^2w(\xi)|^{-\frac{1}{6}},
\end{split}
\end{equation*}
where the last equation is by \eqref{eq1}.
\end{proof}
It is insightful to apply \eqref{asymptotics7} to obtain the series expansion of $d_0$. For $\{(a,b):h(a,b,\alpha)=0\}$, it suffices to consider $a \geq b$ or $a \leq b$ by the symmetry of $h$ under $(a,b) \mapsto (b,a)$. Define the two roots of $h(a,b,\alpha)=0$ in terms of $a,\alpha$ as
\begin{equation}\label{quadratic}
\begin{split}
B_P(a,\alpha) &= \frac{4 a-\alpha  a^2-(2-\alpha)+\sqrt{\left(\alpha  a^2-4 a+2-\alpha\right)^2-4 a^2 \alpha  (2 - \alpha )}}{2 a \alpha },\: a \in [-1,0)\\
B(a,\alpha) &= \frac{4 a-\alpha  a^2-(2-\alpha)-\sqrt{\left(\alpha  a^2-4 a+2-\alpha\right)^2-4 a^2 \alpha  (2 - \alpha )}}{2 a \alpha },\: a \in [\frac{2-\alpha}{\alpha},1].
\end{split}
\end{equation}
Several comments regarding $B_P,B$ are summarized below. The following lemma can be verified by direct computation using \eqref{quadratic}.
\begin{lemma}\label{quadratic2}
For all $\alpha \in (1,2)$, the curve $a\mapsto B_P(a,\alpha)$ parametrizes $\Gamma^1_{(a,b)}(\alpha) \cap \{a\leq b\}$ and $a\mapsto B(a,\alpha)$ parametrizes $\Gamma^2_{(a,b)}(\alpha) \cap \{a \geq b\}$. The pointwise convergence $\lim\limits_{a\rightarrow 0-}B_P(a,\alpha)=0,\: \lim\limits_{\alpha\rightarrow 2-}B_P(a,\alpha)=0$ holds. Furthermore $B(\cdot,\alpha)$ obtains the global maxima on $[\frac{2-\alpha}{\alpha},1]$ at the boundary where $B(\frac{2-\alpha}{\alpha},\alpha)=B(1,\alpha)=\frac{2-\alpha}{\alpha}$. The global minimum is obtained at $a_m=(\frac{2-\alpha}{\alpha})^{\frac{1}{2}}$ and $B(a_m,\alpha) \geq 1-(1+\sqrt{2})(\alpha-1)$. 
\end{lemma}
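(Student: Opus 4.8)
The plan is to read off every assertion directly from the quadratic formula \eqref{quadratic}, the symmetry $h(a,b,\alpha)=h(b,a,\alpha)$, and the structural facts established above: that $E_\alpha$ is a smooth embedded one-manifold with two connected components $\Gamma^1_{(a,b)}(\alpha)$ (through the origin of the $(a,b)$-plane) and $\Gamma^2_{(a,b)}(\alpha)$ (not). First I would write $h$ as a quadratic in $b$, $h(a,b,\alpha)=\alpha a\,b^2+(\alpha a^2-4a+2-\alpha)\,b+(2-\alpha)a$, with leading coefficient $\alpha a$; its two roots are then exactly $B_P$ (the $+$ branch) and $B$ (the $-$ branch) of \eqref{quadratic}. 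I would check that the discriminant $(\alpha a^2-4a+2-\alpha)^2-4a^2\alpha(2-\alpha)$ is nonnegative on the two stated $a$-intervals, so $B_P,B$ are real there, and locate the diagonal intersections via $h(a,a,\alpha)=2a(a-1)(\alpha a-(2-\alpha))$, which vanishes in $[-1,1)^2$ only at $(0,0)$ and $(\frac{2-\alpha}{\alpha},\frac{2-\alpha}{\alpha})$; hence $\Gamma^1$ meets the diagonal at the origin and $\Gamma^2$ at $(\frac{2-\alpha}{\alpha},\frac{2-\alpha}{\alpha})$. Since $a\mapsto(a,B_P(a,\alpha))$ is continuous with $B_P(a,\alpha)\to0$ and $B_P(a,\alpha)\approx-a$ as $a\to0^-$, its image is the piece of the origin-component with $a<b$, i.e.\ $\Gamma^1_{(a,b)}(\alpha)\cap\{a\le b\}$; a parallel argument anchored at $a=\frac{2-\alpha}{\alpha}$ plus a sign check on $a-B(a,\alpha)$ identifies $B$ with $\Gamma^2_{(a,b)}(\alpha)\cap\{a\ge b\}$, surjectivity onto the arcs following from connectedness.

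For the two limit claims I would Taylor-expand the numerator of $B_P$ about $a=0$: the constant parts $-(2-\alpha)$ and $\sqrt{(2-\alpha)^2}=2-\alpha$ cancel, and after expanding the square root so do the $O(a)$ terms, leaving the numerator $=-2\alpha a^2+O(a^3)$ against the denominator $2\alpha a$, whence $B_P(a,\alpha)\to0$. For $\alpha\to2^-$ with $a\in[-1,0)$ fixed, setting $\alpha=2$ in the numerator leaves $4a-2a^2+|2a^2-4a|$, and $2a^2-4a>0$ on $[-1,0)$ makes this vanish identically, so continuity in $\alpha$ gives $B_P(a,\alpha)\to0$.

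For the extrema of $B(\cdot,\alpha)$ on $[\frac{2-\alpha}{\alpha},1]$, I would first evaluate the endpoints: at $a=\frac{2-\alpha}{\alpha}$ and at $a=1$ the discriminant collapses to the perfect squares $\frac{4(2-\alpha)^2(\alpha-1)^2}{\alpha^2}$ and $4(\alpha-1)^2$, and in both cases $B=\frac{2-\alpha}{\alpha}$. For interior critical points I would impose $\partial_a h(a,B)=0$ together with $h(a,B)=0$; forming $h-a\,\partial_a h$ eliminates the $b^2$ and constant terms and reduces the system to $(\alpha a^2+\alpha-2)\,B=0$, and since $B\equiv0$ on $\Gamma^2$ would force $a=0\notin[\frac{2-\alpha}{\alpha},1]$, the unique interior critical point is $a_m=\sqrt{\frac{2-\alpha}{\alpha}}$, the geometric mean of the two endpoints. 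A single interior critical point with equal boundary values is then the global minimum once one checks $B(a_m,\alpha)<\frac{2-\alpha}{\alpha}$ (equivalently, the conjugate root of the quadratic at $a_m$ exceeds $1$), so the maximum sits at the boundary. Finally, solving the quadratic at $a=a_m$ using $\alpha a_m^2=2-\alpha$ gives $B(a_m,\alpha)=\frac{2}{\alpha}-a_m-\frac{2}{\sqrt\alpha}\sqrt{\frac1\alpha-a_m}$, and bounding $\frac1\alpha-a_m=\frac{1-\sqrt{1-(\alpha-1)^2}}{\alpha}$ and $\frac2\alpha-a_m$ by elementary square-root inequalities yields $B(a_m,\alpha)\ge1-(1+\sqrt2)(\alpha-1)$, with equality at $\alpha=1$ to leading order.

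I expect the two genuine obstacles to be (i) the bookkeeping in the parametrization step—matching each algebraic root to the correct connected component and the correct half-plane, with care near the excluded endpoint $a=1$ and the fact that the $(a,b)$-origin is the point $K_3$ rather than $\xi=0$; and (ii) the final inequality, where a crude bound on $\frac1\alpha-a_m$ only produces the coefficient $\frac2\alpha$ rather than $\sqrt2$, so one must use the sharper estimate $1-\sqrt{1-\delta}\le\frac{\delta}{1+\sqrt{1-\delta}}$ and a short monotonicity-in-$\alpha$ argument to close the bound uniformly on $(1,2)$.
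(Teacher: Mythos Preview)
Your proposal is correct and follows exactly the approach the paper takes: the paper's entire proof of this lemma is the single sentence ``can be verified by direct computation using \eqref{quadratic},'' and your write-up is precisely that direct computation carried out in detail. Your derivations (the factorization $h(a,a,\alpha)=2a(a-1)(\alpha a-(2-\alpha))$, the Taylor expansion giving $B_P\sim -a$ near $a=0^-$, the Vieta relations at the endpoints, the elimination $h-a\,\partial_a h=-(\alpha a^2-(2-\alpha))b$ locating $a_m$, and the explicit form $B(a_m,\alpha)=\tfrac{2}{\alpha}-a_m-\tfrac{2}{\sqrt{\alpha}}\sqrt{\tfrac{1}{\alpha}-a_m}$) all check out, so there is nothing further to compare.
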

\begin{corollary}
Consider $d_0 = d_0(a,B_P(a,\alpha),\alpha,\zeta)$ defined on $\Gamma^1_{(a,b)}(\alpha) \cap \{a\leq b\}$ and let $\zeta \in C^\infty_c$ be supported in a small neighborhood around $(a,B_P(a,\alpha))$ excluding $(a,b)=(0,0)$. Then for some $c\in\mathbb{C}\setminus\{0\}$ independent of $a,\alpha,\zeta$,
\begin{equation}\label{asymptotics8}
    d_0 = c \cdot \zeta(a,B_P(a,\alpha))|a|^{-\frac{1}{3}}\sum_{j=0}^\infty a_j(\alpha) |a|^j
\end{equation}
holds where the series converges absolutely for all $a \in [-1,0)$. The coefficients $\{a_j\}$ can be computed explicitly; for example, $a_0(\alpha) = c^\prime (2-\alpha)^{-\frac{1}{6}}\alpha^{-\frac{5}{6}}$ where $c^\prime$ is a non-zero numerical constant independent of $\zeta,\alpha$.
\end{corollary}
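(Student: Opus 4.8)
The plan is to substitute the explicit parametrization $a\mapsto(a,B_P(a,\alpha))$ into the closed form for $d_0$ supplied by \Cref{fold} and then track, factor by factor, how the analytic structure degenerates as $a\to 0^-$ (i.e.\ as the point on $E_\alpha$ approaches the cusp in $K_3$). Write $\xi$ for the point of $M$ with $(\cos\xi_1,\cos\xi_2)=(a,b)$, $b=B_P(a,\alpha)$, and note $w(\xi)=\bigl(1-\tfrac{a+b}{2}\bigr)^{\alpha/2}$. Feeding this into \eqref{hessian} and the formula for $\tilde h$ gives
\begin{equation*}
\tilde h(\xi,\alpha)=-\tfrac{\alpha^2}{64}\Bigl(1-\tfrac{a+b}{2}\Bigr)^{\alpha-3},\qquad Tr\,D^2w(\xi)=-\tfrac{\alpha}{16}\Bigl(1-\tfrac{a+b}{2}\Bigr)^{\frac{\alpha}{2}-2}\bigl[\alpha(a^2+b^2)+4ab-4(a+b)+2(2-\alpha)\bigr],
\end{equation*}
while $\partial_yh(\xi,\alpha)=\pm\tfrac{\sin\xi_2}{\sqrt{A^2+C^2}}\,G(a,b,\alpha)$ with $A=\alpha a^2+2(b-2)a+2-\alpha$, $C=(2-\alpha)\sin\xi_1\sin\xi_2$, and $G$ the expression inside the brackets of \eqref{singularity}. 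By \eqref{asymptotics7}, $d_0=c\,\zeta(\xi)\,|\tilde h\,\partial_yh|^{-1/3}\,|Tr\,D^2w|^{-1/6}$ (equivalently $c\,\zeta(\xi)\,|Tr\,D^2w|^{-1/2}|\partial_y^3w|^{-1/3}$ via \eqref{eq1}), so the task reduces to reading off the $a$-expansion of each factor, with $\zeta(\xi)=\zeta(a,B_P(a,\alpha))$ under the usual identification of $\xi$ with its $(a,b)$-coordinates.

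First I would record that $B_P(\cdot,\alpha)$ is real-analytic on $[-1,0)$: it is a branch of the algebraic function implicitly defined by $h=0$, regular by the Implicit Function Theorem since $\nabla_{(a,b)}h\neq 0$ on $[-1,1)^2$, and the radicand in \eqref{quadratic} equals $(2-\alpha)^2>0$ at $a=0$ so no branch point sits there; reading $h(a,B_P(a,\alpha),\alpha)=0$ off at low order gives $B_P(a,\alpha)=-a-\tfrac{4}{2-\alpha}a^2+O(a^3)$, in particular $B_P'(0)=-1$. Substituting $b=B_P(a,\alpha)$, the quantities $1-\tfrac{a+b}{2}$ (which tends to $1$), $\alpha(a^2+b^2)+4ab-4(a+b)+2(2-\alpha)$ (which tends to $2(2-\alpha)\neq0$), and $\sin\xi_2/\sqrt{A^2+C^2}$ (nonzero since $D^2w$ is never the zero matrix on $M$ and $\sin\xi_2\neq0$ at the relevant points) become real-analytic and non-vanishing functions of $a$ on $[-1,0)$. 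Hence $|\tilde h|^{-1/3}$ and $|Tr\,D^2w|^{-1/6}$ are real-analytic and non-vanishing in $a$ there, and $\partial_yh$ equals a real-analytic non-vanishing factor times $G(a,B_P(a,\alpha),\alpha)$.

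The crux is to show that $a\mapsto G(a,B_P(a,\alpha),\alpha)$ vanishes to order exactly one at $a=0$ and is nonzero on $(-1,0)$. Nonvanishing on $(-1,0)$ is precisely the content of \Cref{singularity2}: the only common zero of $h$ and $G$ on $[-1,1)^2$ is the cusp $(0,0)$. For the order of vanishing at $a=0$ one differentiates once, using $B_P'(0)=-1$; a short computation from the explicit polynomial $h$ gives $\partial_aG(0,0)=-8(2-\alpha)$ and $\partial_bG(0,0)=4(2-\alpha)$, hence $\tfrac{d}{da}G(a,B_P(a,\alpha),\alpha)\big|_{a=0}=-12(2-\alpha)\neq0$. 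Therefore $G(a,B_P(a,\alpha),\alpha)/a$ is real-analytic and non-vanishing on $[-1,0)$, so $|\partial_yh|^{-1/3}=|a|^{-1/3}\times(\text{real-analytic, non-vanishing in }a)$. Multiplying the three factors gives $d_0=c\,\zeta(a,B_P(a,\alpha))\,|a|^{-1/3}F(a,\alpha)$ with $F$ real-analytic and non-vanishing on $[-1,0)$; expanding $F$ in powers of $|a|=-a$ produces \eqref{asymptotics8}, and the coefficients $a_j(\alpha)$ are obtained by composing and multiplying the (explicit) Taylor series of $B_P$, the power functions, the bracket, and $G/a$. For $a_0(\alpha)$, evaluate at the cusp: $Tr\,D^2w\to-\tfrac{\alpha(2-\alpha)}{8}$ (consistent with \eqref{second}) and, from the first-order data above, $\partial_y^3w\sim-\tfrac{3\sqrt2\,\alpha}{4(2-\alpha)}\,a$, whence $a_0(\alpha)$ is a nonzero numerical constant times $\bigl(\tfrac{\alpha(2-\alpha)}{8}\bigr)^{-1/2}\bigl(\tfrac{3\sqrt2\,\alpha}{4(2-\alpha)}\bigr)^{-1/3}=c'\alpha^{-5/6}(2-\alpha)^{-1/6}$, as claimed.

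I expect the main obstacle to be twofold. The bookkeeping obstacle is keeping the several nested substitutions ($b=B_P(a,\alpha)$ inside $\tilde h$, inside $Tr\,D^2w$, and inside $G$) organized enough to verify cleanly the first-order non-degeneracy of $G$ at the cusp and to pin down the exact $\alpha$-dependence of $a_0$. The more delicate, genuinely analytic obstacle is justifying that the Taylor series of $F$ at $a=0$ converges on all of $[-1,0)$: this requires locating the nearest complex singularities of the algebraic function $B_P$ — the zeros of the discriminant $(\alpha a^2-4a+2-\alpha)^2-4a^2\alpha(2-\alpha)$ — together with the complex zeros of the factors $1-\tfrac{a+b}{2}$, $\alpha(a^2+b^2)+4ab-4(a+b)+2(2-\alpha)$, and $G/a$, and checking that all of them lie at distance at least $1$ from the origin (for each fixed $\alpha\in(1,2)$).
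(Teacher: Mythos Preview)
Your approach is exactly the paper's: invoke the closed form \eqref{asymptotics7} from \Cref{fold}, substitute the parametrization $b=B_P(a,\alpha)$, and read off the $|a|^{-1/3}$ prefactor together with an analytic remainder. The paper's own proof is two sentences---it simply points to \eqref{asymptotics7} and says absolute convergence follows from analyticity of the right-hand side on the relevant portion of $M$---so your write-up is a detailed expansion of that sketch, and your computation of $a_0(\alpha)$ via $Tr\,D^2w\to -\tfrac{\alpha(2-\alpha)}{8}$ and $\partial_y^3 w\sim -\tfrac{3\sqrt{2}\alpha}{4(2-\alpha)}a$ is correct.

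The radius-of-convergence issue you flag at the end is genuine and is \emph{not} resolved in the paper either: the paper's appeal to ``analyticity of the RHS of \eqref{asymptotics7} on $\xi\in M$'' gives local analyticity at every point of $[-1,0)$ but does not by itself force the Taylor series \emph{centered at $a=0$} to have radius $\ge 1$. Your proposed route---locate the complex zeros of the discriminant in \eqref{quadratic} and of the non-vanishing factors---is the natural way to close this gap, and is strictly more than what the paper provides.
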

\begin{proof}
The series expansion \eqref{asymptotics8} is shown by the general formula \eqref{asymptotics7}. The pointwise absolute convergence on $a\in [-1,0)$ follows from the analyticity of the RHS of \eqref{asymptotics7} on $\xi \in M$ corresponding to $\Gamma^1_{(a,b)}(\alpha) \cap \{a\leq b\}$.  
\end{proof}
\begin{remark}
By direct computation, $a_j(\alpha)$ contains the term $(2-\alpha)^{-p_j}$ for all $j \geq 0$ for some $p_j>0$, and therefore one obtains a singular behavior of the leading term of $J$ as $\alpha \rightarrow 2-$. Another interesting regime is when $\xi \rightarrow P$, or equivalently $a\rightarrow 0-$, along $E_\alpha$. A qualitative difference between a cusp ($\xi=P$) and a fold ($\xi \neq P$) is manifested quantitatively by the blow-up $|a|^{-\frac{1}{3}}$ as $a \rightarrow 0-$.
\end{remark}
On the other hand, consider the case $\alpha \rightarrow 1+$. By symmetry, consider $\Gamma^2_{(a,b)}(\alpha) \cap \{a \geq b\}$. The asymptotic behavior of $d_0(a(\alpha),B(a(\alpha),\alpha),\alpha)$ where $a(\alpha)\in [\frac{2-\alpha}{\alpha},1]$ is computed as $\alpha \rightarrow 1+$.

\begin{corollary}\label{wave limit}
Consider $d_0 = d_0(a(\alpha),B(a(\alpha),\alpha),\alpha,\zeta)$ and let $\zeta\in C^\infty_c$ be supported in a small neighborhood around $(a(\alpha),B(a(\alpha),\alpha))$. Then there exists $\alpha_0>1$ such that
\begin{equation*}
    |d_0| \simeq |\zeta(a,B(a,\alpha))| (\alpha-1)^{\frac{2}{3}-\frac{5\alpha}{12}},
\end{equation*}
whenever $\alpha \in (1,\alpha_0]$ and $a \in [\frac{2-\alpha}{\alpha},1]$. Furthermore the implicit constants depend only on $\alpha_0$.
\end{corollary}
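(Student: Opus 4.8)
The plan is to use the explicit formula for $d_0$ supplied by \Cref{fold}, namely $|d_0|\simeq|\zeta(\xi)|\,|\tilde h(\xi,\alpha)\,\partial_y h(\xi,\alpha)|^{-1/3}|Tr D^2 w(\xi)|^{-1/6}$, and to track the size of each of the three factors along $\Gamma^2_{(a,b)}(\alpha)\cap\{a\ge b\}$ as $\alpha\to 1+$, uniformly in $a\in[\tfrac{2-\alpha}{\alpha},1]$. Writing $\epsilon=\alpha-1$, the target exponent will drop out of the bookkeeping $\big(\epsilon^{\alpha-3}\cdot\epsilon^{3/2}\big)^{-1/3}\big(\epsilon^{\alpha/2-1}\big)^{-1/6}=\epsilon^{2/3-5\alpha/12}$ once the three exponents $\alpha-3$, $\tfrac32$ and $\tfrac\alpha2-1$ have been established for $|\tilde h|$, $|\partial_y h|$ and $|Tr D^2 w|$ respectively.

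First I would localize and rescale. By \Cref{quadratic2}, on $\Gamma^2_{(a,b)}(\alpha)\cap\{a\ge b\}$ one has $B(a,\alpha)\le\tfrac{2-\alpha}{\alpha}$ and $B(a_m,\alpha)\ge 1-(1+\sqrt2)\epsilon$, hence $0\le 1-a\le\tfrac{2\epsilon}{\alpha}$ and $\tfrac{2\epsilon}{\alpha}\le 1-b\le(1+\sqrt2)\epsilon$; thus the whole curve lies in an $O(\epsilon)$-ball around $(a,b)=(1,1)$, i.e. around $\xi=0$, and stays uniformly away from $K_3$ (which sits at $(a,b)=(0,0)$), so \Cref{fold} applies for $\alpha$ near $1$. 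Set $1-a=\epsilon\mu$, $1-b=\epsilon\nu$ with $\mu\in[0,\tfrac2\alpha]$, $\nu\in[\tfrac2\alpha,1+\sqrt2]$. Then $w(\xi)=\big(\tfrac{(1-a)+(1-b)}{2}\big)^{\alpha/2}=\big(\tfrac{\epsilon(\mu+\nu)}{2}\big)^{\alpha/2}\simeq\epsilon^{1/2}$, and substituting into $h(a,b,\alpha)=0$ gives $\alpha(\mu^2+\nu^2)=2(\mu+\nu)(1+O(\epsilon))$, a fixed circle in $(\mu,\nu)$ up to an $O(\epsilon)$ perturbation, with limiting arc $\mathcal A=\{\mu^2+\nu^2=2(\mu+\nu),\ 0\le\mu\le\nu\}$ (endpoints $(0,2)$ and $(2,2)$, avoiding $(0,0)$).

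The two ``easy'' factors follow by inserting $w\simeq\epsilon^{1/2}$ into \eqref{hessian} and the definition of $\tilde h$. Since $\tilde h(\xi,\alpha)=\tfrac{\alpha^2}{128}\,\tfrac{-(1-a)-(1-b)}{w^{8/\alpha-2}}$ and $\tfrac\alpha2\big(\tfrac8\alpha-2\big)=4-\alpha$, we get $|\tilde h|\simeq\epsilon\cdot\epsilon^{-(4-\alpha)}=\epsilon^{\alpha-3}$; and since $Tr D^2 w(\xi)=\partial_x^2 w(\xi)=-\tfrac{\alpha}{16\,w^{4/\alpha-1}}\,Tr M$ with $Tr M=-2\alpha(u+v)+\alpha(u^2+v^2)+4uv=-2\alpha(\mu+\nu)\epsilon(1+O(\epsilon))$ and $\tfrac\alpha2\big(\tfrac4\alpha-1\big)=2-\tfrac\alpha2$, we get $|Tr D^2 w|\simeq\epsilon\cdot\epsilon^{-(2-\alpha/2)}=\epsilon^{\alpha/2-1}$. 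In both the $(\mu,\nu)$-dependent prefactors are powers of $\mu+\nu\in[1,2+\sqrt2]$ and hence bounded above and below with constants depending only on $\alpha_0$.

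The third factor, $\partial_y h$, is the delicate point. Because $\partial_{yy}w\equiv 0$ on $E_\alpha$, $k_2(\xi)$ is parallel to the null vector $(-C,A)$ of the matrix in \eqref{hessian}, $A=\alpha\cos^2\xi_1+2(\cos\xi_2-2)\cos\xi_1+2-\alpha$, $C=(2-\alpha)\sin\xi_1\sin\xi_2$; together with $\nabla_\xi h=-(\sin\xi_1\,\partial_a h,\ \sin\xi_2\,\partial_b h)$ this gives $\partial_y h=(C^2+A^2)^{-1/2}\big(C\sin\xi_1\,\partial_a h-A\sin\xi_2\,\partial_b h\big)$. Expanding each ingredient in $\epsilon$ from the polynomial forms near $(1,1)$ — $\sin\xi_1=\sqrt{2\epsilon\mu}(1{+}O(\epsilon))$, $\sin\xi_2=\sqrt{2\epsilon\nu}(1{+}O(\epsilon))$, $A=-2\epsilon\nu(1{+}O(\epsilon))$, $C=2(2-\alpha)\epsilon\sqrt{\mu\nu}(1{+}O(\epsilon))$, $\partial_a h=2\epsilon(1-\alpha\mu)+O(\epsilon^2)$, $\partial_b h=2\epsilon(1-\alpha\nu)+O(\epsilon^2)$ — yields
\begin{equation*}
\partial_y h = 2\sqrt2\,\epsilon^{3/2}\,F(\mu,\nu,\alpha)\,(1+O(\epsilon)),\qquad F=\frac{(2-\alpha)\mu\sqrt\nu(1-\alpha\mu)+\nu^{3/2}(1-\alpha\nu)}{\sqrt\nu\,\sqrt{(2-\alpha)^2\mu+\nu}}.
\end{equation*}
It then remains to show $|F|\simeq 1$ uniformly. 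The upper bound is immediate from $\mu,\nu=O(1)$ and $\sqrt{(2-\alpha)^2\mu+\nu}\ge\sqrt\nu\gtrsim 1$. For the lower bound I would let $\epsilon\to 0$: $F$ converges, uniformly on the compact (and $\alpha$-dependent) rescaled curve, to $F_1(\mu,\nu)=\big(\mu\sqrt\nu(1-\mu)+\nu^{3/2}(1-\nu)\big)/\big(\sqrt\nu\sqrt{\mu+\nu}\big)$ on $\mathcal A$; and $F_1=0$ forces $\mu(1-\mu)+\nu(1-\nu)=0$, i.e. $\mu+\nu=\mu^2+\nu^2$, which on $\mathcal A$ (where $\mu^2+\nu^2=2(\mu+\nu)$) forces $\mu=\nu=0$, the excluded origin $\xi=0$. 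Hence $F_1$ is continuous and nowhere zero on the compact arc $\mathcal A$, so $|F_1|\ge c_0>0$, and by joint continuity $|F|\ge c_0/2$ for all $\alpha\in(1,\alpha_0]$. (That $\partial_y h\ne0$ off $K_3$ is already \Cref{singularity2}; the content here is the \emph{uniformity} down to $\alpha=1$ and including $\mu\to0$, i.e. $\xi_1\to0$, where $\nabla_\xi h$ partially collapses.) Therefore $|\partial_y h|\simeq\epsilon^{3/2}$, and combining the three estimates via \Cref{fold} yields $|d_0|\simeq|\zeta(a,B(a,\alpha))|\,\epsilon^{2/3-5\alpha/12}$ with constants depending only on $\alpha_0$, as claimed. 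The main obstacle is precisely this last uniform non-degeneracy of $F$, which is why a limiting-arc argument is needed rather than a bare appeal to \Cref{singularity2}.
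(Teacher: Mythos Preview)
Your proposal is correct and follows essentially the same route as the paper: both start from the formula of \Cref{fold}, localize near $(a,b)=(1,1)$ via \Cref{quadratic2}, and track the three factors $|\tilde h|$, $|\partial_y h|$, $|\operatorname{Tr} D^2 w|$ separately to obtain the exponents $\alpha-3$, $\tfrac32$, $\tfrac\alpha2-1$ in $\epsilon=\alpha-1$. The paper works in the radial variable $r=|\xi|$ with $r\simeq\epsilon^{1/2}$ and simply asserts $|\partial_y h|\simeq(\alpha-1)\tilde b^{1/2}$, whereas you carry out the expansion explicitly in the rescaled $(\mu,\nu)$ variables and close the uniform lower bound via a compactness argument on the limiting arc $\mathcal A$; this is the same computation done with more care, not a different idea. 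One cosmetic slip: you write $w\simeq\epsilon^{1/2}$ but then correctly use $w^{8/\alpha-2}\simeq\epsilon^{4-\alpha}$, which requires $w\simeq\epsilon^{\alpha/2}$; the subsequent algebra is unaffected.
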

\begin{proof}
Let $a=1-\Tilde{a},\: b=1-\Tilde{b}$. By $a \in [\frac{2-\alpha}{\alpha},1]$ and \Cref{quadratic2},
\begin{equation}\label{sma2}
    0 \leq \Tilde{a} \leq \frac{2}{\alpha}(\alpha-1),\: \frac{2}{\alpha}(\alpha-1) \leq \Tilde{b} \leq (1+\sqrt{2})(\alpha-1). 
\end{equation}
Observe that the image of $\Gamma^{2}_{(a,b)}$ under the inverse cosine lies in $[-\frac{\pi}{2},\frac{\pi}{2}]^2$, and it can be verified that for $z \in [0,\frac{\pi}{2}]$,
\begin{equation}\label{sma}
    \frac{z}{2} \leq \sin z \leq z ,\: 1 - \frac{z^2}{2}\leq \cos z \leq 1-\frac{z^2}{4},\: \sqrt{2}z^{\frac{1}{2}} \leq \cos^{-1}(1-z) \leq 2 z^{\frac{1}{2}}.
\end{equation}
Define $(\xi_1,\xi_2) \in M$ as the inverse cosine of $(a(\alpha),B(a(\alpha),\alpha))$, respectively; for definiteness, let $\xi_i \geq 0$. In the polar coordinate where $r^2 = \xi_1^2 + \xi_2^2$, \eqref{sma} is used to obtain
\begin{equation}\label{detail1}
|\Tilde{h}(\xi,\alpha)|\simeq \frac{\alpha^2}{w(\xi)^{\frac{8}{\alpha}-2}}(\cos\xi_1+\cos\xi_2-2)\simeq r^{-6+2\alpha},
\end{equation}
where we neglect any powers of $\alpha$ since they can be uniformly bounded for $\alpha \in (1,2)$. To estimate $Tr D^2w$, define $(\Tilde{r},\Tilde{\theta})$ such that $\Tilde{r}^2 = \Tilde{a}^2 + \Tilde{b}^2,\: \tan \Tilde{\theta} = \frac{\Tilde{b}}{\Tilde{a}}$. Then
\begin{equation*}
\begin{split}
|Tr D^2w| &\simeq r^{\alpha-4} |\alpha(a^2+b^2) + 2((b-2)a+(a-2)b) + 4 - 2\alpha|\\
&=r^{\alpha-4} |-2\alpha (\Tilde{a}+\Tilde{b})+\alpha(\Tilde{a}^2+\Tilde{b}^2)+4\Tilde{a}\Tilde{b}|
\end{split}
\end{equation*}
We claim
\begin{equation*}
-2\alpha (\Tilde{a}+\Tilde{b}) \leq -2\alpha (\Tilde{a}+\Tilde{b})+\alpha(\Tilde{a}^2+\Tilde{b}^2)+4\Tilde{a}\Tilde{b} \leq -\alpha(\Tilde{a}+\Tilde{b}).    
\end{equation*}
The lower bound is trivial since $\Tilde{a},\Tilde{b}\geq 0$. The upper bound is equivalent to
\begin{equation*}
    \frac{\Tilde{a}^2+\Tilde{b}^2}{\Tilde{a}+\Tilde{b}} = \frac{\Tilde{r}}{\sin\Tilde{\theta}+\cos\Tilde{\theta}}\leq \frac{\alpha}{\alpha+2},
\end{equation*}
which holds uniformly on $\alpha \in (1,2)$ if $\Tilde{r} \leq \frac{1}{3}$. Hence for all $\Tilde{a},\Tilde{b}$ sufficiently small,
\begin{equation}\label{detail2}
|TrD^2w| \simeq r^{\alpha-4} (\Tilde{a}+\Tilde{b}) \simeq r^{\alpha-4}(\alpha-1),   
\end{equation}
by \eqref{sma2}. Likewise for sufficiently small $\Tilde{a},\Tilde{b}$
\begin{equation}\label{detail3}
|\partial_y h| \simeq (\alpha-1)\Tilde{b}^{\frac{1}{2}} \simeq (\alpha-1)^{\frac{3}{2}}.
\end{equation}
For all $\epsilon>0$, since $\Gamma^2_{(a,b)}(\alpha) \subseteq \{(a,b) \in [1-\epsilon,1]\times [1-\epsilon,1]\}$ whenever $\alpha \in (1,\alpha_0(\epsilon)]$ for some $\alpha_0(\epsilon)>1$, there exists $\alpha_0>1$ sufficiently close to $1$ such that all small angle approximations are justified (see \eqref{sma}) and
\begin{equation*}
\begin{split}
\frac{|d_0|}{c\cdot \zeta(a(\alpha),B(a(\alpha),\alpha))} &= |\Tilde{h}(\xi_1,\xi_2,\alpha)\partial_y h(\xi_1,\xi_2,\alpha)|^{-\frac{1}{3}}|Tr D^2w(\xi_1,\xi_2)|^{-\frac{1}{6}}\\
&\simeq (\alpha-1)^{-\frac{2}{3}}r^{\frac{8}{3}-\frac{5\alpha}{6}}.
\end{split}
\end{equation*}
by \eqref{asymptotics7}, \eqref{detail1}, \eqref{detail2}, and \eqref{detail3}. Combining with
\begin{equation*}
r\simeq |\xi_1|+|\xi_2| = \cos^{-1}(1-\Tilde{a}) + \cos^{-1}(1-\Tilde{b})\simeq (\alpha-1)^{\frac{1}{2}},    
\end{equation*}
the proof is complete.
\end{proof}
\begin{remark}
As can be seen in \Cref{contour}, the trajectory $\alpha \mapsto (\frac{2-\alpha}{\alpha},\frac{2-\alpha}{\alpha})$ traces the intersection of $\Gamma^2_{(a,b)}$ and the bisectrix. For $a(\alpha) = \frac{2-\alpha}{\alpha}$, an explicit computation yields
\begin{equation*}
    d_0 =c\cdot\zeta(a(\alpha),a(\alpha)) 2^{-\frac{5\alpha}{12}}\alpha^{-(\frac{7}{6}-\frac{5\alpha}{12})}(\alpha-1)^{\frac{2}{3}-\frac{5\alpha}{12}}(2-\alpha)^{-\frac{1}{2}},
\end{equation*}
for some $c \in \mathbb{C}\setminus\{0\}$ independent of $\alpha,\zeta$. Suppose $supp(\zeta)$ is sufficiently small such that $\zeta(a(\alpha),a(\alpha))=1$. Then note that $d_0 \xrightarrow[\alpha\rightarrow 2-]{}\infty$ as $(\frac{2-\alpha}{\alpha},\frac{2-\alpha}{\alpha})$ approaches the origin, which corresponds to the cusp $K_3$. Furthermore $d_0 \xrightarrow[\alpha\rightarrow 1+]{}0$ as $(\frac{2-\alpha}{\alpha},\frac{2-\alpha}{\alpha})$ approaches $(a,b)=(1,1)$, which corresponds to the origin of $\mathbb{T}^2$ where $w(\xi)$ blows up. 

Another example of trajectory, given by $(a,b)=(1,\frac{2-\alpha}{\alpha})$ with the leading term
\begin{equation}\label{k2}
    d_0 = c\cdot \zeta(1,\frac{2-\alpha}{\alpha})\alpha^{-\frac{5}{12}(4-\alpha)} (\alpha-1)^{\frac{2}{3}-\frac{5\alpha}{12}},
\end{equation}
shows a different qualitative behavior as $\alpha\rightarrow 2-$.
\end{remark}
\begin{proof}[Proof of \Cref{conjecture}]
For all $\tau>0$,
\begin{equation*}
    \sup_{v\in \mathbb{R}^2}|J_{\Phi_v,\eta(\frac{\cdot}{N})}| \leq \| \eta \|_{L^1(\mathbb{T}^2)}N^2,
\end{equation*}
by the triangle inequality. Hence $\tau\geq 1$.

Considering $supp \Big(\eta(\frac{\cdot}{N})\Big) = \{|\xi| \in [\frac{\pi}{2}N,2\pi N]\}$ and
\begin{equation*}
\min\limits_{\xi\in K_2\cup K_3}|\xi| = \cos^{-1}(\frac{2-\alpha}{\alpha}),\: \max\limits_{\xi\in \Gamma^2_{(\xi_1,\xi_2)}(\alpha)}|\xi| = \sqrt{2}\cos^{-1}(\frac{2-\alpha}{\alpha})    
\end{equation*}
obtained at 
\begin{equation*}
\{(\xi_1,\xi_2):(\pm \cos^{-1}(\frac{2-\alpha}{\alpha}),0),(0,\pm \cos^{-1}(\frac{2-\alpha}{\alpha}))\},\:\{(\xi_1,\xi_2):\pm \cos^{-1}(\frac{2-\alpha}{\alpha}),\pm \cos^{-1}(\frac{2-\alpha}{\alpha})\}\subseteq M,    
\end{equation*}
respectively, define $N_\alpha \in 2^\mathbb{Z}$ to be the largest number satisfying
\begin{equation*}
2\pi N_\alpha < r_\alpha:=\cos^{-1}(\frac{2-\alpha}{\alpha}).    
\end{equation*}
Note that $N_\alpha$ increases as $\alpha$ increases with $\lim\limits_{\alpha\rightarrow 1+}N_\alpha=0$ and $N_2 = 2^{-3}$. Using the support condition of $\eta(\frac{\cdot}{N})$, the set of $N\in 2^\mathbb{Z}$ that satisfies the RHS of \eqref{prove} is given by
\begin{equation}\label{N}
\begin{split}
supp(\eta(\frac{\cdot}{N}))\cap K_3 \neq \emptyset &\Longleftrightarrow N \in S_3:=\{2^0,2^{-1}\}\\
supp(\eta(\frac{\cdot}{N}))\cap (K_2\setminus K_3) \neq \emptyset &\Longleftrightarrow N \in S_2:=[\frac{1}{2\pi}r_\alpha,\frac{2\sqrt{2}}{\pi}r_\alpha]\setminus S_3\\
supp(\eta(\frac{\cdot}{N}))\cap (K_1\setminus (K_2 \cup K_3)) \neq \emptyset &\Longleftrightarrow N \in S_1:=\Big((\frac{2\sqrt{2}}{\pi}r_\alpha,2^{-2}] \cup (0,N_\alpha]\Big)\setminus (S_2 \cup S_3).
\end{split}
\end{equation}

Suppose $N >N_\alpha$. For every $\xi \in \mathbb{T}^2 \setminus B(0,\frac{r_\alpha}{2})$, there exists a neighborhood $\Omega_\xi(\alpha)$ containing $\xi$ and a constant $C_\xi(\alpha)>0$ such that for all $\zeta \in C^\infty_c(\Omega_\xi)$,
\begin{equation}\label{ik}
\sup\limits_{v\in \mathbb{R}^2}|J_{\Phi_v,\zeta}| \leq C_\xi \| \zeta \|_{C^3(\mathbb{R}^2)}\tau^{-\frac{1}{h(\Phi_{v_\xi})}},    
\end{equation}
by \cite[Theorem 1.1]{ikromov2011uniform}. By compactness, an open cover $\{\Omega_\xi\}$ of $\mathbb{T}^2\setminus B(0,\frac{r_\alpha}{2})$ reduces to a finite subcover $\{\Omega_{\xi_j}\}_{j=1}^{n_0}$, where $n_0 = n_0(\alpha)\in \mathbb{N}$, and let $\{\phi_j\}_{j=1}^{n_0}$ be a ($\alpha$-dependent) partition of unity  subordinate to the finite sub-cover. Note that if $\xi \in K_1$, then $U_\xi \cap K_2 = \emptyset$, and if $\xi \in K_2$, then $U_\xi \cap K_3 = \emptyset$ since the oscillatory indices of $\Phi_{v_\xi}$ at $\xi$ are distinct on each $K_j$. Consequently each $\xi \in K_3$ contributes to the finite sub-cover for all $\alpha \in (1,2)$.

Let $\eta_{N,j}(\cdot) = \eta(\frac{\cdot}{N})\phi_j(\cdot)$. Then $\|\eta_{N,j}\|_{C^3}\lesssim N^{-3}$ where the implicit constant depends on the given partition of unity. Since $N>N_\alpha$, we have $N^{-3} \leq C(\alpha)N^{2-\frac{3}{4}\alpha}$ where $C(\alpha) = N_\alpha^{-5+\frac{3}{4}\alpha}$.

By \Cref{height} and $\tau \geq 1$, the slowest decay occurs on $K_3$ with $h(\Phi_v) = \frac{4}{3}$. By \Cref{height}, \eqref{ik}, and the triangle inequality, 
\begin{equation}\label{dispersive est3}
    \sup_{v\in\mathbb{R}^2}|J_{\Phi_v,\eta(\frac{\cdot}{N})}|\leq \sup_{v\in\mathbb{R}^2}\sum_{j=1}^{n_0} |J_{\Phi_v,\eta_{N,j}}| \leq \Big(\sum_{j=1}^{n_0} C_{\xi_j}\| \eta_{N,j} \|_{C^3}\Big) \tau^{-\frac{3}{4}} \lesssim_\alpha \sum_{j=1}^{n_0} C_{\xi_j}\cdot N^{2-\frac{3}{4}\alpha}\tau^{-\frac{3}{4}}.
\end{equation}
For $N>N_\alpha$, a similar argument using the partition of unity and \eqref{ik} yields \eqref{prove} with sharp decay rates $\sigma_0 \in \{\frac{3}{4},\frac{5}{6},1\}$.

Suppose $N \leq N_\alpha \leq \frac{1}{8}$. Recalling that $v=\frac{x}{h\tau}$, do a change of variable $\xi \mapsto N \xi$ to obtain
\begin{equation*}
    \sup_{v\in \mathbb{R}^2}|J_{\Phi_v,\eta(\frac{\cdot}{N})}(\tau)| = N^2\sup_{x\in \mathbb{R}^2} \left|\int_{\mathbb{R}^2}e^{i(x\cdot \xi - \tau w(N\xi))}\eta(\xi)d\xi\right|.
\end{equation*}

By adopting the proof of \cite[Proposition 1]{cho2011remarks}, we obtain sharp dispersive estimates of a free solution governed by a non-smooth, non-homogeneous dispersion relation. A change of variables
\begin{equation*}
    z_i = \frac{2}{N} \sin (\frac{N\xi_i}{2}),\: \xi_i = \frac{2}{N}\sin^{-1}(\frac{Nz_i}{2}),\: \tau \mapsto 2^\alpha \tau,
\end{equation*}
with $J_c(z) = \Big((1-(\frac{Nz_1}{2})^2)(1-(\frac{Nz_2}{2})^2)\Big)^{-\frac{1}{2}}$, yields
\begin{equation*}
\begin{split}
    N^2\int_{\mathbb{R}^2}e^{i(x\cdot \xi - \tau w(N\xi))}\eta(\xi)d\xi&= N^2 \int_{\mathbb{R}^2}e^{i(x\cdot\xi(z)-\tau N^\alpha\rho^\alpha)}\eta(\xi(z))J_c(z)dz\\
    &= N^2\int_0^\infty e^{-i\tau N^\alpha \rho^\alpha}G(\rho,x,N)\rho d\rho:=I,
\end{split}
\end{equation*}
where we denote $(r,\theta)$ and $(\rho,\phi)$ as the polar coordinates for $x=(x_1,x_2)$ and $z=(z_1,z_2)$, respectively, and
\begin{equation*}
\begin{split}
G(\rho,x,N) = G(\rho) &= \int_{S^1} e^{i x \cdot \xi(z)}\eta(\xi(z))J_c(z)d\phi(z)\\
&=\int_0^{2\pi} e^{i\lambda \Phi_G(\phi)} \eta(\xi(\rho,\phi))J_c(z(\rho,\phi))d\phi,
\end{split}
\end{equation*}
where $\lambda = \rho r$ and 
\begin{equation}\label{phase}
\Phi_G(\phi) = \frac{2}{\rho N}\Big(\cos\theta \sin^{-1}(\frac{N\rho \cos\phi}{2})+\sin\theta \sin^{-1}(\frac{N\rho \sin\phi}{2})\Big).
\end{equation}

By the support condition of $\eta$,
\begin{equation*}
    (\frac{N\pi}{4})^2 \leq \sin^{-1}(\frac{Nz_1}{2})^2+\sin^{-1}(\frac{Nz_2}{2})^2 \leq (N\pi)^2,
\end{equation*}
and the small angle approximation $z\leq \sin^{-1}z \leq 2z$ on $z\in [0,\frac{1}{\sqrt{2}}]$, one obtains
\begin{equation*}
    \frac{\pi}{4} \leq \rho \leq 2\pi,
\end{equation*}
since $\frac{N |z_i|}{2} = |\sin(\frac{N \xi_i}{2})| \leq \frac{1}{\sqrt{2}}$. When clear in context, we use the same symbols $\eta,J_c$ for the representations in different variables. We prove
\begin{equation}\label{dispersive est}
    |I| \lesssim (\alpha-1)^{-\frac{1}{2}}N^{2-\alpha}\tau^{-1},
\end{equation}
from which the proof is complete by interpolating with the trivial bound $|I| \lesssim N^2$.

Let $r_0>0$, independent of $N$, to be specified later and suppose $r \leq r_0$. Integration by parts yields
\begin{equation}\label{ibp}
    I = \frac{N^{2-\alpha}}{i\alpha \tau}\int e^{-i\tau N^\alpha \rho^\alpha}\partial_\rho (G(\rho) \rho^{2-\alpha})d\rho.
\end{equation}
Since $|G|\lesssim 1$ and the domain of integration is supported away from the origin,
\begin{equation*}
    |G(\rho)\partial_\rho (\rho^{2-\alpha})|\lesssim 1.
\end{equation*}
By the chain rule $\partial_\rho = \cos \phi \partial_{z_1}+\sin \phi \partial_{z_2}$ and the estimate,
\begin{equation*}
    |\partial_{z_i} e^{i x \cdot \xi(z)}| = |\partial_{z_i}e^{i\frac{2x_i}{N}\sin^{-1}(\frac{Nz_i}{2})}| = \frac{|x_i|}{\sqrt{1-(\frac{Nz_i}{2})^2}}\leq \sqrt{2}|x_i|\leq \sqrt{2}r_0,
\end{equation*}
one obtains 
\begin{equation*}
    \sup_{N\leq N_\alpha}\left|\int_{S^1} \partial_\rho \Big(e^{i x \cdot \xi(z)}\Big)\eta(\xi(z))J_c(z)d\phi(z)\right|\lesssim r_0.
\end{equation*}
By repeated applications of the product and chain rule,
\begin{equation*}
\sup_{N \leq N_\alpha}|\partial_\rho^k \eta(\xi(z))| \lesssim_{k,\eta} 1,\:\sup_{N\leq N_\alpha}|\partial_\rho^k J_c(z)| \lesssim_{k} 1,
\end{equation*}
for all $k \geq 0$ and therefore
\begin{equation*}
    |\partial_\rho(G(\rho)\rho^{2-\alpha})| \lesssim 1+r_0,
\end{equation*}
altogether implying
\begin{equation*}
|I| \lesssim_{r_0,\alpha} N^{2-\alpha}\tau^{-1}.
\end{equation*}

Suppose $r>r_0$. From \eqref{phase},
\begin{equation*}
\begin{split}
\partial_\phi \Phi_G(\rho,\phi) &= - \frac{\cos\theta \sin\phi}{(1-(\frac{N\rho \cos\phi}{2})^2)^{\frac{1}{2}}}+ \frac{\sin\theta \cos\phi}{(1-(\frac{N\rho \sin\phi}{2})^2)^{\frac{1}{2}}}\\
\partial_\phi^2 \Phi_G(\rho,\phi)&=-(1-(\frac{N\rho}{2})^2)\Big(\frac{\cos\theta \cos\phi}{(1-(\frac{N\rho \cos\phi}{2})^2)^{\frac{3}{2}}}+ \frac{\sin\theta \sin\phi}{(1-(\frac{N\rho \sin\phi}{2})^2)^{\frac{3}{2}}}\Big).
\end{split}
\end{equation*}
For a fixed $\rho>0$, denote $\Phi_G(\phi) = \Phi_G(\rho,\phi)$. The critical points correspond to the roots of $\partial_\phi \Phi_G$, and are the solutions to
\begin{equation}\label{critical point}
g(\rho,\phi)\tan\phi = \tan\theta,\:g(\rho,\phi) :=\Big(\frac{1-(\frac{N\rho \sin\phi}{2})^2}{1-(\frac{N\rho \cos\phi}{2})^2}\Big)^{\frac{1}{2}}.
\end{equation}
By direct computation, $g(\rho,\cdot)$ is a strictly positive $\pi$-periodic even function such that $g(\rho,\frac{\pi}{4}+\frac{k\pi}{2})=1$ for all $k \in \mathbb{Z}$. Furthermore $g(\rho,\phi)>1$ if $\phi \in [0,\frac{\pi}{4})$, $g(\rho,\phi)<1$ if $\phi \in (\frac{\pi}{4},\frac{\pi}{2}]$, and
\begin{equation*}
    \partial_{\phi}|_{\phi = \frac{\pi}{4}} \Big(g(\rho,\cdot)\tan(\cdot)\Big) = 4 - \frac{16}{8-N^2\rho^2}>0. 
\end{equation*}
Therefore $\tan\phi < g(\rho,\phi)\tan \phi <1$ on $(0,\frac{\pi}{4})$ and $g(\rho,\phi)\tan \phi < \tan\phi$ on $(\frac{\pi}{4},\frac{\pi}{2})$. By symmetry, suppose $x_i \geq 0$ for $i=1,2$, and therefore $0\leq \tan\theta \leq \infty$. By graphing $\phi \mapsto g(\rho,\phi)\tan\phi$ on $[0,2\pi)$, there exist two solutions, $\phi_{\pm}(\rho)$, where $\phi_+ \in [0,\frac{\pi}{2}]$ and $\phi_{-} = \phi_+ + \pi$. A crude estimate $|\phi_{\pm} - \theta_{\pm}|\leq \frac{\pi}{4}$ follows from a further inspection of the graph where $\theta_+ = \theta,\: \theta_{-}=\theta+\pi$.

The critical points are non-degenerate. If $\phi$ satisfies $\partial_\phi^2 \Phi_G=0$, then $g(\rho,\phi)^3 \cot \phi = -\tan\theta$. 
Assuming $\partial_{\phi}^2 \Phi_G(\phi_{\pm})=0$ and substituting \eqref{critical point}, we have $g(\rho,\phi_{\pm})^2 + \tan^2\phi_{\pm}=0$, a contradiction. Since $\phi_{\pm}$ and $\theta_{\pm}$ are in the same quadrants, we have $\cos\theta_{\pm}\cos\phi_{\pm},\:\sin\theta_{\pm}\sin\phi_{\pm}\geq 0$, and therefore
\begin{equation}\label{phase3}
|\partial_{\phi}^2 \Phi_G(\phi_{\pm})| \simeq \cos (\phi_{\pm} - \theta_{\pm})\simeq 1,
\end{equation}
independent of $N,\rho$.

Construct $\chi_{\pm} \in C^\infty_c(\mathbb{R}_\phi)$ given by $\chi_{+} = 1$ on $[0,\frac{\pi}{2}]$, supported in $(-\frac{\pi}{8},\frac{5\pi}{8})$, $\chi_{-} = 1$ on $[\pi,\frac{3\pi}{2}]$, supported in $(\frac{7\pi}{8},\frac{13\pi}{8})$, and define $\chi_0 = 1-(\chi_{+}+\chi_{-})$. Since $|\phi_{\pm} - \theta_{\pm}|\leq \frac{\pi}{4}$, we have $\chi_{\pm}(\phi_{\pm})=1$ for all $N \leq N_\alpha$. Let $\Tilde{\chi}_{\pm}:=\eta J_c  \chi_{\pm}$ and $\Tilde{\chi}_0 := \eta J_c  \chi_{0}$. Note that since
\begin{equation}\label{bump}
|\partial_{\phi}^k \eta|, |\partial_{\phi}^k J_c|, |\partial_{\phi}^k \chi_{\pm}| \lesssim_k 1,    
\end{equation}
for all $k\geq 0$ independent of $N,\rho$, so are the higher order partial derivatives (in $\phi$) of $\Tilde{\chi}_{\pm}$. Define
\begin{equation*}
    G_{\pm}(\lambda) = \int_0^{2\pi}e^{i\lambda \Phi_G(\phi)}\Tilde{\chi}_{\pm}(\phi)d\phi,
\end{equation*}
and similarly for $G_0$, and hence $G = G_+ + G_{-}+ G_0$. By \cite[Chapter VIII, Proposition 3]{stein1993harmonic}, $G_{\pm}$ has the asymptotics as $\lambda\rightarrow\infty$ given by
\begin{equation}\label{stationary2}
    G_{\pm}(\lambda) = \sqrt{\frac{2\pi}{|\partial_{\phi}^2 \Phi_G(\phi_{\pm})|}} e^{i(\lambda \Phi_G(\phi_{\pm})-\frac{\pi}{4})}\Tilde{\chi}_{\pm}(\phi_{\pm})\lambda^{-\frac{1}{2}} + \Tilde{G}_{\pm}(\lambda).
\end{equation}
More precisely, for all $k \in \mathbb{N}\cup\{0\}$, there exists $\lambda_0(k),\:C(k)>0$ such that
\begin{equation}\label{rapid decay2}
    |\partial_\lambda^k \Tilde{G}_{\pm}| \leq C\lambda^{-(\frac{3}{2}+k)},
\end{equation}
for all $\lambda \geq \lambda_0$. Since the estimates \eqref{phase3}, \eqref{bump} are uniform with respect to $N,\rho$, the constants $\lambda_0,C$ can be chosen to be independent of $N,\rho$. Since $\rho \leq 2\pi$, let $r_0 \geq \frac{\max(\lambda_0(0),\lambda_0(1))}{2\pi}$.

Away from the critical points, the integral in $\phi$ yields a rapid decay in $\lambda$. We claim
\begin{equation}\label{rapid decay}
    |\partial_\lambda G_0| \lesssim_k \lambda^{-k},
\end{equation}
for all $\lambda>0$ and $k\geq 1$ uniformly in $N,\rho$. Since
\begin{equation*}
    \partial_{\lambda} G_0 = i \int_0^{2\pi} e^{i\lambda \Phi_G(\phi)}\Phi_G(\phi)\Tilde{\chi}_{0}(\phi)d\phi= -\frac{1}{\lambda} \int_0^{2\pi} e^{i\lambda \Phi_G(\phi)}\partial_\phi \Big(\frac{\Phi_G \Tilde{\chi}_{0}}{\partial_\phi \Phi_G}\Big)d\phi,
\end{equation*}
and $|\partial_\phi \Phi_G|\geq |\sin (\phi-\theta)| \geq \sin(\frac{\pi}{8})$ for all $\phi \in [\frac{5\pi}{8},\frac{7\pi}{8}]\cup [\frac{13\pi}{8},\frac{15\pi}{8}]$ and $\theta \in [0,\frac{\pi}{2}] \cup [\pi,\frac{3\pi}{2}]$, \eqref{rapid decay} is shown for $k=1$ by the triangle inequality; for $k\geq 2$, \eqref{rapid decay} is shown by repeated use of integration by parts.

For $r\geq r_0$, consider the integral in \eqref{ibp} with $G$ replaced by $G_0$. Since by \eqref{rapid decay},
\begin{equation*}
|\partial_\rho G_0 |= r |\partial_\lambda G_0| \lesssim \frac{r}{\lambda} = \frac{1}{\rho},  
\end{equation*}
the integration by parts yields an estimate consistent with \eqref{dispersive est}. By replacing $G$ by $\Tilde{G}_{\pm}$ in the same integral, the bound \eqref{dispersive est} follows by \eqref{rapid decay2}.

It remains to show that $I$ with $G$ replaced by the leading term of $G_+$ in \eqref{stationary2} satisfies \eqref{dispersive est}; the analysis on $G_{-}$ is similar and therefore is omitted. Consider
\begin{equation*}
II := N^2 r^{-\frac{1}{2}}\int_0^\infty e^{i\tau \Psi(\rho)}a(\rho)d\rho
\end{equation*}
where
\begin{equation*}
\begin{split}
a(\rho)&:= \eta(\rho,\phi_+)J_c(\rho,\phi_+)\chi_+(\phi_+)|\partial_\phi^2 \Phi_G(\phi_+)|^{-\frac{1}{2}}\rho^{-\frac{1}{2}}\\
\Psi(\rho) &= -N^\alpha \rho^\alpha + \Phi_G(\phi_+)\frac{r \rho}{\tau}.
\end{split}
\end{equation*}

The region of integration
\begin{equation*}
\{\rho\in [\frac{\pi}{4},2\pi]:\alpha N^\alpha \rho^{\alpha-1}<\frac{1}{2}|\partial_\rho(\Phi_G(\phi_+)\rho)|\frac{r}{\tau}\:\text{or}\:\alpha N^\alpha \rho^{\alpha-1}>2|\partial_\rho(\Phi_G(\phi_+)\rho)|\frac{r}{\tau}\}    
\end{equation*}
is included in the case
\begin{equation}\label{ibp2}
\frac{r}{\tau} \gg N|\nabla w(N\xi)|\:\text{or}\:\frac{r}{\tau}\ll N|\nabla w (N\xi)|   
\end{equation}
since $|\partial_\rho(\Phi_G(\phi_+)\rho)|\simeq 1$ uniformly in $N,\rho$ as can be observed in \eqref{vdc2}. For $r,\xi$ satisfying \eqref{ibp2}, the lower bound of the phase function on $supp(\eta)\subseteq \mathbb{R}^2_{\xi}$ is
\begin{equation*}
    |\nabla_\xi (\frac{x}{\tau}\cdot \xi - w(N\xi))| \geq \frac{N}{2}|\nabla w(N\xi)| \simeq_\alpha N^\alpha.
\end{equation*}
Let $E_i = supp(\eta) \cap \{|\partial_{\xi_i}(\frac{x}{\tau}\cdot \xi - w(N\xi))|\gtrsim N^\alpha\}$ for $i=1,2$. By direct computation,
\begin{equation*}
|\partial_{\xi_i}^2 (\frac{x}{\tau}\cdot \xi - w(N\xi))| \lesssim N^\alpha,
\end{equation*}
and thus by integration by parts,
\begin{equation*}
\begin{split}
N^2 \left|\int_{\mathbb{R}^2}e^{i(x\cdot \xi - \tau w(N\xi))}\eta(\xi)d\xi\right|&\leq N^2\Big(\left|\int_{E_1}e^{i(x\cdot \xi - \tau w(N\xi))}\eta(\xi)d\xi\right|+\left|\int_{E_2}e^{i(x\cdot \xi - \tau w(N\xi))}\eta(\xi)d\xi\right|\Big)\\
&\lesssim N^{2-\alpha}\tau^{-1}.
\end{split}
\end{equation*}
It suffices to assume $\frac{r}{\tau} \simeq N |\nabla w(N\xi)|\simeq N^\alpha(\xi_1^2+\xi_2^2)^{\frac{\alpha-1}{2}}\simeq N^\alpha$. By direct computation,
\begin{equation}\label{vdc2}
\begin{split}
\partial_\rho(\Phi_G(\phi_+)\rho) &= \frac{\cos\theta\cos\phi_+}{(1-(\frac{N\rho \cos\phi_+}{2})^2)^{\frac{1}{2}}}+\frac{\sin\theta\sin\phi_+}{(1-(\frac{N\rho \sin\phi_+}{2})^2)^{\frac{1}{2}}}\\
\partial_{\rho}^2(\Phi_G(\phi_+)\rho) &= \frac{N^2\rho}{4}\Big(\frac{\cos\theta \cos^3\phi_+}{(1-(\frac{N\rho \cos\phi_+}{2})^2)^{\frac{3}{2}}}+\frac{\sin\theta \sin^3\phi_+}{(1-(\frac{N\rho \sin\phi_+}{2})^2)^{\frac{3}{2}}}\Big)\\
&-\Big(\frac{\cos\theta \sin\phi_+}{(1-(\frac{N\rho \cos\phi_+}{2})^2)^{\frac{3}{2}}}-\frac{\sin\theta \cos\phi_+}{(1-(\frac{N\rho \sin\phi_+}{2})^2)^{\frac{3}{2}}}\Big)\partial_\rho \phi_+=: III+IV.
\end{split}
\end{equation}
We claim $|\partial_\rho^2(\Phi_G(\phi_+)\rho)|\lesssim N^2$. Since $N\rho \leq \frac{\pi}{4}$ and $\theta,\phi_+ \in [0,\frac{\pi}{2}]$,
\begin{equation*}
    \sup_{\rho \in [\frac{\pi}{4},2\pi]}|III| \lesssim N^2.
\end{equation*}
Since $|IV| \lesssim |\partial_\rho \phi_+|$, it suffices to show
\begin{equation}\label{phip}
    \sup_{\rho \in [\frac{\pi}{4},2\pi]}|\partial_\rho \phi_+| \lesssim N^2,
\end{equation}
which follows from implicitly differentiating \eqref{critical point}, thereby obtaining
\begin{equation*}
\begin{split}
\partial_\rho \phi_+(\rho) = -\frac{\partial_\rho g(\rho,\phi_+) \cdot \tan \phi_+}{\partial_\phi g(\rho,\phi_+) \cdot \tan \phi_+ + g \sec^2 \phi_+}= \frac{N^2 \rho \sin (4\phi_+)}{(1-(\frac{N\rho}{2})^2)(16-N^2\rho^2(1-\cos (4\phi)))}. 
\end{split}
\end{equation*}
By the triangle inequality,
\begin{equation*}
\begin{split}
|\partial_\rho^2 \Psi| &\geq \alpha(\alpha-1)N^\alpha\rho^{\alpha-2} - |\partial_\rho^2(\Phi_G(\phi_+)\rho)|\frac{r}{\tau}\\
&\gtrsim (\alpha-1)N^\alpha -N^2\frac{r}{\tau}\simeq  (\alpha-1 - N^2)N^\alpha\\
&\gtrsim (\alpha-1)N^\alpha,
\end{split}
\end{equation*}
where the last inequality follows from $N \leq N_\alpha$. By the Van der Corput Lemma \cite[Chapter VIII]{stein1993harmonic},
\begin{equation}\label{vdc3}
|II| \lesssim (\alpha-1)^{-\frac{1}{2}}N^{2-\alpha}\tau^{-1}\Big(\| a \|_{L^\infty([\frac{\pi}{4},2\pi])}+\| \partial_\rho a\|_{L^1([\frac{\pi}{4},2\pi])}\Big),  
\end{equation}
since $\frac{r}{\tau}\simeq N^\alpha$. By \eqref{phase3}, $a \in L^\infty([\frac{\pi}{4},2\pi])$ uniformly in $N$. To estimate $\partial_\rho a$, the term that needs most care is $\partial_\rho |\partial_{\phi}^2 \Phi_G(\phi_+)|^{-\frac{1}{2}}$. Since $\phi_+,\theta \in [0,\frac{\pi}{2}]$, we have $\partial_{\phi}^2 \Phi_G(\phi_+) \leq 0$. By \eqref{phase3}, \eqref{phip}, the chain rule
\begin{equation*}
\partial_\rho\Big(\partial_{\phi}^2 \Phi_G(\phi_+)\Big) = \partial_{\rho\phi\phi}\Phi_G(\phi_+) + \partial_{\phi}^3 \Phi_G(\phi_+) \cdot \partial_\rho \phi_+,
\end{equation*}
and the uniform bound
\begin{equation*}
    \sup_{N \leq N_\alpha}|\partial_{\rho}^{k_1}\partial_{\phi}^{k_2}\Phi_G| \lesssim_{k_1,k_2} 1,
\end{equation*}
we have $\partial_\rho a \in L^\infty([\frac{\pi}{4},2\pi])$ uniformly in $N$.

Lastly we show \eqref{lower bound}. Let $C_3(\alpha)>0$ satisfy
\begin{equation*}
C_3(\alpha) = \inf\{C>0:\sup_{v\in \mathbb{R}^2}|J_{\Phi_v,\eta(\frac{\cdot}{N})}| \leq C N^{2-\frac{3}{4}\alpha}\tau^{-\frac{3}{4}},\:\forall \tau>0, N \in S_3\},    
\end{equation*}
and define $C_i(\alpha)$ similarly for $i=1,2$. By \eqref{vdc3}, \eqref{dispersive est3}, we have $\max\limits_{1 \leq i \leq 3}C_i(\alpha)<\infty$. For $\sigma_0 \in \{\frac{3}{4},\frac{5}{6},1\}$ and $\xi \in supp(\eta(\frac{\cdot}{N}))$, we have
\begin{equation}\label{lower bound2}
\lim_{\tau\rightarrow \infty}|J_{\Phi_{v_\xi},\eta(\frac{\cdot}{N})}|\tau^{\sigma_0} \leq C_i(\alpha). 
\end{equation}
The limit above is a constant multiple of the non-zero leading terms given by \eqref{asymptotics} due to the set of critical points of $\Phi_{v_{\xi}}$ whose cardinality is uniformly bounded above for all $\alpha \in (1,2)$ by observing \eqref{group velocity}.

For $i=3,\: \sigma_0 = \frac{3}{4}$, the non-zero contributions to the limit are due to the cusps in $K_3$. Let $N \in S_3$. By \Cref{cusp},
\begin{equation}\label{lower bound3}
    c\cdot \alpha^{-\frac{3}{4}} (2-\alpha)^{-\frac{1}{4}} \leq C_3(\alpha),
\end{equation}
where $c>0$ depends only on $\eta$.

For $i=2,\: \sigma_0 = \frac{5}{6}$, let $N\in S_2$. For $\alpha$ sufficiently close to $2$, we have $N=2^{-2}$. Since $\eta(\frac{\xi(\alpha)}{2^{-2}})\xrightarrow[\alpha\rightarrow 2-]{} 0$ for $\xi = (0,r_\alpha)$, we may replace $\eta(\frac{\cdot}{2^{-2}})$ by another smooth bump function $\Tilde{\eta}$ supported in $\{|\xi| \in [\frac{\pi}{8},\frac{\pi}{2}+\epsilon_0]\}$ where $\epsilon_0>0$ is sufficiently small so that $supp (\Tilde{\eta})\cap K_3 = \emptyset$. Arguing as \eqref{lower bound3} by using \eqref{k2}, one obtains
\begin{equation*}
    (\alpha-1)^{\frac{2}{3}-\frac{5\alpha}{12}} \lesssim_{\Tilde{\eta}} C_2(\alpha).
\end{equation*}
On the contrary, suppose $\alpha>1$ is not close to $2$ such that $N\in S_2$ satisfies $N<2^{-2}$. Then there exists $N^{(\alpha)}\in S_2$ such that $\frac{2}{3\pi}r_\alpha \leq N^{(\alpha)} \leq \frac{4}{3\pi}r_\alpha$. Then $|\eta(\frac{r_\alpha}{N^{(\alpha)}})|\geq c>0$ where $c$ is independent of $\alpha$. Using the same example \eqref{k2}, one obtains
\begin{equation*}
    (\alpha-1)^{\frac{2}{3}-\frac{5\alpha}{12}} \lesssim_{\eta} C_2(\alpha).
\end{equation*}

Lastly, let $N \in S_1$ and $\sigma_0 = 1$. Pick $\xi\in\mathbb{T}^2$ such that $|\xi| = N\pi$. Arguing as \eqref{lower bound2} and invoking \Cref{stationary}, we have
\begin{equation*}
C_1(\alpha) \geq \sqrt{2\pi} |\eta(\frac{\xi}{N})|\cdot |H(\xi,\alpha)|^{-\frac{1}{2}}N^{\alpha-2}\gtrsim (\alpha-1)^{-\frac{1}{2}},
\end{equation*}
where the last inequality follows from using the small angle approximation (see \eqref{sma}, \eqref{sma2}) to obtain
\begin{equation*}
    |H(\xi,\alpha)|\simeq (\alpha-1)N^{-4+2\alpha}.
\end{equation*}
\section{Conclusion and future work.}\label{conclusion}
We have shown, with a convergence rate, the continuum limit of DNLSE on $h\mathbb{Z}^2$ to the FNLSE on $\mathbb{R}^2$ as $h\rightarrow 0$ in the energy subcritical regime for finite time. Our proof employs sharp dispersive estimates that are obtained by studying appropriate degenerate oscillatory integrals. It is of interest to compare the sharp decay rate of $\sigma_0=\frac{3}{4}$ to that in the discrete classical Schr\"odinger equation ($\sigma_0 = \frac{2}{3}$) and the discrete wave equation ($\sigma_0 = \frac{2}{3}$) at the cost of the best constants blowing up as $\alpha\rightarrow 1+,\:2-$. As for future work, it is of interest to extend to the case of mixed fractional derivatives \cite{choi2022well} where (3.2) in dimension two, is replaced by an appropriate discrete analog of

\begin{equation*}
    \Big(-\frac{\partial^2}{\partial x_1^2}\Big)^{\frac{\alpha_1}{2}}
    +\Big(-\frac{\partial^2}{\partial x_2^2}\Big)^{\frac{\alpha_2}{2}}. 
\end{equation*}

By numerical and asymptotic  techniques, we will explore the conditions of highly-localized states in the discrete models that may relate to finite-time blow-up solutions in the continuum limit.
\end{proof}

\section{Acknowledgment.}
Both authors are supported by the U.S. National Science Foundation under the grant DMS-1909559. B. Choi was also supported by an NSF/RTG postdoctoral fellowship under the under the RTG grant DMS-1840260.

\section{Data Availability Statement.}

Data sharing not applicable to this article as no datasets were generated or analysed during the current study.

\section{Competing Interests.}

There are no competing interests other than the National Science Foundation fundings written in the Acknowledgment section.

\bibliographystyle{abbrv}
\bibliography{ref}

\begin{thebibliography}{10}

\bibitem{borovyk2017klein}
V.~Borovyk and M.~Goldberg.
\newblock The klein--gordon equation on $\mathbb{Z}^2$ and the quantum harmonic lattice.
\newblock {\em Journal de Math{\'e}matiques Pures et Appliqu{\'e}es}, 107(6):667--696, 2017.

\bibitem{boulenger2016blowup}
T.~Boulenger, D.~Himmelsbach, and E.~Lenzmann.
\newblock Blowup for fractional nls.
\newblock {\em Journal of Functional Analysis}, 271(9):2569--2603, 2016.

\bibitem{cho2011remarks}
Y.~Cho, T.~Ozawa, and S.~Xia.
\newblock Remarks on some dispersive estimates.
\newblock {\em Communications on Pure \& Applied Analysis}, 10(4):1121, 2011.

\bibitem{choi2022well}
B.~Choi and A.~Aceves.
\newblock Well-posedness of the mixed-fractional nonlinear schr{\"o}dinger equation on $\mathbb{R}^2$.
\newblock {\em Partial Differential Equations in Applied Mathematics}, page 100406, 2022.

\bibitem{cuenin2021sharp}
J.-C. Cuenin and I.~A. Ikromov.
\newblock Sharp time decay estimates for the discrete klein--gordon equation.
\newblock {\em Nonlinearity}, 34(11):7938, 2021.

\bibitem{dinh2018blow}
V.~D. Dinh.
\newblock Blow-up criteria for fractional nonlinear schr\"odinger equation.
\newblock {\em arXiv preprint arXiv:1808.07368}, 2018.

\bibitem{grande2019continuum}
R.~Grande.
\newblock Continuum limit for discrete nls with memory effect.
\newblock {\em arXiv preprint arXiv:1910.05681}, 2019.

\bibitem{greenblatt2009asymptotic}
M.~Greenblatt.
\newblock The asymptotic behavior of degenerate oscillatory integrals in two dimensions.
\newblock {\em Journal of Functional Analysis}, 257(6):1759--1798, 2009.

\bibitem{greenblatt2014stability}
M.~Greenblatt.
\newblock Stability of oscillatory integral asymptotics in two dimensions.
\newblock {\em Journal of Geometric Analysis}, 24(1):417--444, 2014.

\bibitem{hocking2022topological}
B.~J. Hocking, H.~S. Ansell, R.~D. Kamien, and T.~Machon.
\newblock The topological origin of the peierls--nabarro barrier.
\newblock {\em Proceedings of the Royal Society A}, 478(2258):20210725, 2022.

\bibitem{hong2021finite}
Y.~Hong, C.~Kwak, S.~Nakamura, and C.~Yang.
\newblock Finite difference scheme for two-dimensional periodic nonlinear schr{\"o}dinger equations.
\newblock {\em Journal of Evolution Equations}, 21:391--418, 2021.

\bibitem{hong2021continuum}
Y.~Hong, C.~Kwak, and C.~Yang.
\newblock On the continuum limit for the discrete nonlinear schr\"odinger equation on a large finite cubic lattice.
\newblock {\em arXiv preprint arXiv:2106.13417}, 2021.

\bibitem{hong2021korteweg}
Y.~Hong, C.~Kwak, and C.~Yang.
\newblock On the korteweg--de vries limit for the fermi--pasta--ulam system.
\newblock {\em Archive for Rational Mechanics and Analysis}, 240(2):1091--1145, 2021.

\bibitem{1534-0392_2015_6_2265}
Y.~Hong and Y.~Sire.
\newblock On fractional schrödinger equations in sobolev spaces.
\newblock {\em Communications on Pure \& Applied Analysis}, 14(6):2265--2282, 2015.

\bibitem{hong2018uniform}
Y.~Hong and C.~Yang.
\newblock Uniform strichartz estimates on the lattice.
\newblock {\em arXiv preprint arXiv:1806.07093}, 2018.

\bibitem{hong2019strong}
Y.~Hong and C.~Yang.
\newblock Strong convergence for discrete nonlinear schr\"odinger equations in the continuum limit.
\newblock {\em SIAM Journal on Mathematical Analysis}, 51(2):1297--1320, 2019.

\bibitem{ignat2005dispersive}
L.~I. Ignat and E.~Zuazua.
\newblock Dispersive properties of a viscous numerical scheme for the schr{\"o}dinger equation.
\newblock {\em Comptes Rendus Mathematique}, 340(7):529--534, 2005.

\bibitem{ignat2005two}
L.~I. Ignat and E.~Zuazua.
\newblock A two-grid approximation scheme for nonlinear schr{\"o}dinger equations: dispersive properties and convergence.
\newblock {\em Comptes Rendus Mathematique}, 341(6):381--386, 2005.

\bibitem{ignat2009numerical}
L.~I. Ignat and E.~Zuazua.
\newblock Numerical dispersive schemes for the nonlinear schr{\"o}dinger equation.
\newblock {\em SIAM journal on numerical analysis}, 47(2):1366--1390, 2009.

\bibitem{ignat2012convergence}
L.~I. Ignat and E.~Zuazua.
\newblock Convergence rates for dispersive approximation schemes to nonlinear schr{\"o}dinger equations.
\newblock {\em Journal de math{\'e}matiques pures et appliqu{\'e}es}, 98(5):479--517, 2012.

\bibitem{ikromov2011uniform}
I.~A. Ikromov and D.~M{\"u}ller.
\newblock Uniform estimates for the fourier transform of surface carried measures in $\mathbb{R}^3$ and an application to fourier restriction.
\newblock {\em Journal of Fourier Analysis and Applications}, 17(6):1292--1332, 2011.

\bibitem{jenkinson2017}
M.~Jenkinson and M.~I. Weinstein.
\newblock Discrete solitary waves in systems with nonlocal interactions and the peierls–nabarro barrier.
\newblock {\em Communications in Mathematical Physics}, 351:45--94, 2017.

\bibitem{karpushkin1986theorem}
V.~Karpushkin.
\newblock A theorem concerning uniform estimates of oscillatory integrals when the phase is a function of two variables.
\newblock {\em Journal of Soviet Mathematics}, 35(6):2809--2826, 1986.

\bibitem{keel1998endpoint}
M.~Keel and T.~Tao.
\newblock Endpoint strichartz estimates.
\newblock {\em American Journal of Mathematics}, 120(5):955--980, 1998.

\bibitem{kirpatrick}
K.~Kirkpatrick, E.~Lenzmann, and G.~Staffilani.
\newblock On the continuum limit for discrete nls with long-range lattice interactions.
\newblock {\em Commun. Math. Phys.}, 317:563--–591, 2013.

\bibitem{Kivshar}
Y.~S. Kivshar and D.~K. Campbell.
\newblock Peierls-nabarro potential barrier for highly localized nonlinear modes.
\newblock {\em Phys. Rev. E}, 48:3077--3081, Oct 1993.

\bibitem{schultz1998wave}
P.~Schultz.
\newblock The wave equation on the lattice in two and three dimensions.
\newblock {\em Communications on Pure and Applied Mathematics: A Journal Issued by the Courant Institute of Mathematical Sciences}, 51(6):663--695, 1998.

\bibitem{stefanov2005asymptotic}
A.~Stefanov and P.~G. Kevrekidis.
\newblock Asymptotic behaviour of small solutions for the discrete nonlinear schr{\"o}dinger and klein--gordon equations.
\newblock {\em Nonlinearity}, 18(4):1841, 2005.

\bibitem{stein1993harmonic}
E.~M. Stein and T.~S. Murphy.
\newblock {\em Harmonic analysis: real-variable methods, orthogonality, and oscillatory integrals}, volume~3.
\newblock Princeton University Press, 1993.

\bibitem{varchenko1976newton}
A.~N. Varchenko.
\newblock Newton polyhedra and estimation of oscillating integrals.
\newblock {\em Functional analysis and its applications}, 10(3):175--196, 1976.

\bibitem{zuazua2009convergence}
E.~Zuazua and L.~I. Ignat.
\newblock Convergence of a two-grid algorithm for the control of the wave equation.
\newblock {\em Journal of the European Mathematical Society}, 11(2):351--391, 2009.

\end{thebibliography}
\end{document}